\numberwithin{equation}{section}
\definecolor{dartmouthgreen}{rgb}{0.05, 0.5, 0.06}
\theoremstyle{plain}
\newtheorem{theorem}{Theorem}
\newtheorem{hypo}{Hypothesis}
\newtheorem{prop}{Proposition}[section]
\newtheorem{lemma}{Lemma}[section]
\newtheorem*{lemma*}{Lemma}
\newtheorem*{corollary*}{Corollary}
\newtheorem{definition}{Definition}
\theoremstyle{remark}
\newtheorem{remark}{Remark}
\renewenvironment{proof}{\vspace{5mm}\noindent\textbf{Proof }}{\hspace*{\fill}$\Box$\medskip\vspace{5mm}}
\def \ind {\mathds{1}}
\def \ppp {\ldots}
\def \lc {\left\lbrace}
\def \rc {\right\rbrace}
\def \C {\mathbb{C}}
\def \D {\mathbb{D}}
\def \M {\mathbb{M}}
\def \N {\mathbb{N}}
\def \R {\mathbb{R}}
\def \S {\mathbb{S}}
\def \Z {\mathbb{Z}}
\def \Bc {\mathcal{B}}
\def \Cc {\mathcal{C}}
\def \Fc {\mathcal{F}}
\def \Gc {\mathcal{G}}
\def \Hc {\mathcal{H}}
\def \Lc {\mathcal{L}}
\def \Mc {\mathcal{M}}
\def \Oc {\mathcal{O}}
\def \Rc {\mathcal{R}}
\def \Tc {\mathcal{T}}
\def \Uc {\mathcal{U}}
\def \Lcc {\mathscr{L}}
\def \Pcc {\mathscr{P}}
\def \Rcc {\mathscr{R}}
\def \Tcc {\mathscr{T}}
\def \Pg {\textbf{P}}
\def \Rg {\textbf{R}}
\begin{document}
	\begin{center}
		{\large \bfseries Tamed stability of finite difference schemes for the transport equation on the half-line}
	\end{center}
	
	\begin{center}
		Lucas \textsc{Coeuret}\footnote{Institut de Mathématiques de Toulouse ; UMR5219 ; Université de Toulouse ; CNRS ; UPS, 118 route de Narbonne, F-31062 Toulouse Cedex 9, France. Research of the author was supported by the Agence Nationale de la Recherche project Indyana (ANR-21-CE40-0008-01), as well as by the Labex Centre International de Mathématiques et Informatique de Toulouse under grant agreement ANR-11-LABX-0040 and Artificial and Natural Intelligence Toulouse Institute  under grant agreement  ANR-19-P3IA-0004. E-mail: lucas.coeuret@math-univ.toulouse.fr}
	\end{center}
		
	\vspace{5mm}
	
	\begin{center}
		\textbf{Abstract}
	\end{center}
	
	In this paper, we prove that, under precise spectral assumptions, some finite difference approximations of scalar leftgoing transport equations on the positive half-line with numerical boundary conditions are $\ell^1$-stable but $\ell^q$-unstable for any $q>1$. The proof relies on the accurate description of the Green's function for a particular family of finite rank perturbations of Toeplitz operators whose essential spectrum belongs to the closed unit disk and with a simple eigenvalue of modulus $1$ embedded into the essential spectrum.
	
	\vspace{4mm}
	
	\textbf{AMS classification:} 65M06, 65M12, 47B35, 35L04.
	
	\textbf{Keywords:} hyperbolic equations, difference approximations, stability, boundary conditions, semigroup estimates, Toeplitz operators, Lopatinskii determinant.
	
	\vspace{4mm}
	
	Throughout this article, we define the following sets:
	$$\Uc:=\lc z\in \C, |z|>1\rc,\quad \D:=\lc z\in \C, |z|<1\rc, \quad \S^1:=\lc z\in \C, |z|=1\rc,$$
	$$\overline{\Uc}:=\S^1\cup \Uc,\quad  \overline{\D}:=\S^1\cup \D.$$
	
	For $z\in \C$ and $r>0$, we let $B_r(z)$ denote the open ball in $\C$ centered at $z$ with radius $r$.
	
	For $E$ a Banach space, we denote $\Lc(E)$ the space of bounded operators acting on $E$ and $\left\|\cdot\right\|_{\Lc(E)}$ the operator norm. For $T$ in $\Lc(E)$, the notations $\sigma(T)$ and $\rho(T)$ stand respectively for the spectrum and the resolvent set of the operator $T$.
	
	We let $\Mc_{n,k}(\C)$ denote the space of complex valued $n\times k$ matrices and we use the notation $\Mc_n(\C)$ when $n=k$.
	
	We use the notation $\lesssim$ to express an inequality up to a multiplicative constant. Eventually, we let $C$ (resp. $c$) denote some large (resp. small) positive constants that may vary throughout the text (sometimes within the same line).
	
	\section{Introduction}
	
	\subsection{Context}
	
	The purpose of this article is to study the so-called semigroup stability for discretizations of hyperbolic initial boundary value problems. More precisely, we focus our attention on explicit finite difference schemes that are consistent with the scalar leftgoing ($v<0$) transport equation on the positive half-line with a Cauchy initial datum
	\begin{align}\label{def:PDE}
		\begin{split}
			\forall t\geq0,\forall x\geq 0,\quad&\partial_tu+v\partial_x u =0,\\
			\forall x\geq 0,\quad &u(0,x) = u_0(x)\in\R.
		\end{split}
	\end{align}
	No boundary condition is required here at $x=0$ for \eqref{def:PDE} since the transport operator is outgoing with respect to the boundary. However, the numerical schemes we consider will require the introduction of nonphysical numerical boundary conditions which can generate instabilities. One of the cornerstone in numerical analysis is the so-called Lax equivalence Theorem \cite{Lax} which claims that a consistent scheme is convergent if and only if it is stable. Thus, finding a reliable way to ensure the stability or instability of a numerical scheme is central. 
	
	When considering discretizations of initial boundary value problems, there are several possible definitions for stability that have been introduced. The one interesting us in the present paper is the semigroup stability in the $\ell^q$-topology (see Definition \ref{definition:Stable} of $\ell^q$-stability) which can be presented as the following power boundedness property
	\begin{equation}\label{intro:powerbound}
		\sup_{n\in \N}\left\|\Tc^n\right\|_{\Lc(\Hc_q)}<+\infty
	\end{equation}
	where $\Tc$ is the discrete evolution operator that allows to compute the solution of the numerical scheme from one time step to the next and the vector space $\Hc_q$ is a modification of the vector space $\ell^q(\N)$ which takes into account the numerical boundary conditions of the numerical scheme. A direct conclusion is that the $\ell^q$-stability prevents the existence of eigenvalues of the operator $\Tc$ in the set $\Uc$ (unstable eigenvalues). This corresponds to the so-called Godunov-Ryabenkii condition introduced in \cite{GodRyab}. Let us point out that the existence and position of eigenvalues for the operator $\Tc$ highly depends on the choice of numerical boundary condition that is done, as will be explained in the article.
	
	One of the other most notable definition of stability is the notion of \textit{strong stability} (also known as GKS-stability) introduced in the fundamental contribution \cite{GKS}. It can be considered to be one of the most robust definitions of stability in the context of finite difference schemes for initial boundary value problems as it is stable with respect to perturbations. We refer the interested reader to \cite{GKO} for a complete overview of GKS-theory as it will not be the main focus of this paper. However, we need to point out that the strong stability of a finite difference scheme is fully characterized by the fulfillment of the so-called uniform Kreiss-Lopatinskii condition which in our case corresponds to the operator $\Tc$ not having eigenvalues or generalized eigenvalues in the set $\overline{\Uc}$. Even though the two notions of stability we introduced are quite different, it can be shown using energy estimates that the strong stability, i.e. the verification of the uniform Kreiss-Lopatinskii condition, implies the $\ell^2$-stability (see \cite{Wu,CouGlo,Cou13} which respectively tackle the cases of the scalar one-dimensional problem and one time step scheme, of the multidimensional system and one time step scheme and of the scalar multidimensional problem and multistep scheme) and even the $\ell^q$-stability for all $q\in[1,+\infty]$ (using the semigroup estimates in \cite{CF2}).
	
	However, it remains uncertain to conclude on the $\ell^q$-stability of a numerical scheme whenever the Godunov-Ryabenkii condition is satisfied but not the uniform Kreiss-Lopatinskii condition. Up to our knowledge, this question was first formalized and tackled in \cite{Trefethen,KreissWu}. In the recent paper \cite{CF2}, which highly influenced the study carried in the present article, the authors proved that when the operator $\Tc$ admits simple eigenvalues on the unit circle that do no belong to the essential spectrum of $\Tc$, the numerical scheme remains $\ell^q$-stable for all $q\in[1,+\infty]$. The goal of the present paper is to carry the same kind of analysis when the operator $\Tc$ admits a simple eigenvalue of modulus $1$ that lies within the essential spectrum of $\Tc$, which is up to our knowledge a spectral configuration that was not handled before. Let us point out that studying this spectral assumption has two further incentives: 
	
	- First, the analysis carried under this type of spectral assumption will have to be carefully dealt with since the spatial Green's function, a tool introduced in Section \ref{sec:GS}, will only be meromorphically extended near some interest point and not holomorphically as in \cite{CF2}. This will motivate the introduction of careful computations that could be reused in other studies with similar spectral configuration. Similar computations have already been presented in the continuous setting when studying viscous shock profiles  (see \cite{MasciaZumbrun}) but this is an occasion to extend them to the fully discrete setting.
	
	- Second, this type of spectral configuration also occurs in the context of the study of stability of discrete shock profiles (see \cite{Serre}). The author hopes that the analysis carried in this paper could be used to extend results on this subject, for instance the result of \cite{Godillon} (see Remark \ref{rem:Cons} for more details).
	
	In direct opposition with the main result of \cite{CF2}, the main result of the present paper (Theorem \ref{th:Stab}) states that if the operator $\Tc$ has a simple eigenvalue of modulus $1$ that is not isolated from the essential spectrum, then the numerical scheme remains $\ell^1$-stable, but, under some explicit algebraic condition, the numerical scheme is also either $\ell^q$-stable for all $q\in]1,+\infty]$ or $\ell^q$-unstable for all $q\in]1,+\infty]$. In the later case, we even prove a sharp growth rate \eqref{rateofexploTqn} of the norm $\left\|\Tc^n\right\|_{\Lc(\Hc_q)}$ depending on $n$.
	
	The proof of Theorem \ref{th:Stab} relies on a precise description of the asymptotic behavior of the semigroup associated with the numerical scheme (see Theorem \ref{th:Green}) using an approach referred to as "spatial dynamics". It follows a series of papers initiated by \cite{ZH} and aims at using functional calculus (see \cite[Chapter VII]{Conway}) to express the temporal Green's function (the fundamental solution of the numerical scheme, defined below by \eqref{def:GTToep}) using the resolvent of the operator $\Tc$ via the so-called spatial Green's function (defined in Section \ref{sec:GS}). The general structure of the article is kept quite similar with the one of \cite{CF2}. On the one hand, it allows the interested reader to observe the similarities between both papers that could lead to a more general result combining both the result of the present paper and that of \cite{CF2}. On the other hand, the author thinks that the fundamental differences on the choice of spectral setup will be clearer this way as every step of the proof can be compared with its equivalent in \cite{CF2}.

	\subsection{Setup}
	
	We seek to approach the real valued solution $u$ of the Cauchy problem \eqref{def:PDE}. We introduce a time step $\Delta t>0$ as well as a space step $\Delta x>0$ and from now on we consider that the Courant number $\lambda := \frac{\Delta t}{\Delta x}$ is always kept fixed. To approach the value of the solution $u$ of the Cauchy problem \eqref{def:PDE} in the cell $[n\Delta t,(n+1)\Delta t[\times[(j-1)\Delta x,j\Delta x[$, we define an explicit one-step in time finite difference scheme applied to \eqref{def:PDE}. In the interior cells $[(j-1)\Delta x,j\Delta x[$ for $j\in\N\backslash\lc0\rc$, the finite difference scheme is defined by
	\begin{subequations}\label{def:numScheme}
		\begin{equation}
			\forall n\in \N, \forall j\in \N\backslash\lc0\rc, \quad u^{n+1}_j = \sum_{k=-r}^p a_k u^n_{j+k}.\label{def:numSchemeMain}
		\end{equation}
		where $r,p$ are non negative integers, the coefficients $a_k$ are given real numbers which can depend on the velocity $v$ and on the Courant number $\lambda$. There remains to define how the values $u^n_{1-r},\ppp,u^n_0$ in the so-called "ghost cells" are dealt with. As is usual, the numerical boundary conditions to compute the values of $u^n_{1-r},\ppp,u^n_0$ are given by a linear combination of the first values close to the boundary:
		\begin{equation}
			\forall n\in \N,\forall j\in \lc1-r,\ppp,0\rc, \quad u^n_j = \sum_{k=1}^{p_b} b_{k,j}u^n_k,\label{def:numSchemeCB}
		\end{equation}
	\end{subequations}
	where $p_b$ is a non negative integer, the coefficients $b_{k,j}$ are also given real numbers which can depend on the velocity $v$ and on the Courant number $\lambda$.  The initial values $u^0_j$ for $j\in \N\backslash\lc0\rc$ are computed using the Cauchy initial datum $u_0$ of the PDE \eqref{def:PDE}, for instance as
	$$\forall j\in\N\backslash\lc0\rc, \quad u^0_j:= \frac{1}{\Delta x}\int_{(j-1)\Delta x}^{j\Delta x} u_0(x)dx.$$
	We will assume that the integers $r$ and $p$ are fixed and that $a_{-r},a_p\neq0$. We claim that the case $r=0$ is special as there would be no numerical boundary condition to implement in the numerical scheme \eqref{def:numScheme} since the discretization of the transport equation would be of the "upwind" type. From now on, we will assume that $r\geq1$ while keeping the normalization condition $a_{-r}\neq0$. We will also assume that the integer $p_b$ in \eqref{def:numSchemeCB} satisfies $p_b\leq p$. This condition on $p_b$, which was also made in \cite{CF2}, will ease the computations in the paper as it allows us to translate the boundary condition of the numerical scheme \eqref{def:numSchemeCB} as
	$$\forall n\in \N,\quad \Bc \begin{pmatrix}
		u^n_p\\ \vdots \\ u^n_{1-r}
	\end{pmatrix}=0$$
	where 
	\begin{equation}\label{def:B}
		\Bc:= \left(\begin{array}{ccccccccccc}
			0&\hdots & 0 & -b_{p_b,0} & \hdots & -b_{1,0} & 1 &0 & \hdots&\hdots &0 \\
			 \vdots &\vdots & \vdots &\vdots & \vdots & \vdots & 0 & 1 &\ddots & & \vdots \\
			 \vdots &\vdots & \vdots &\vdots & \vdots & \vdots &\vdots & \ddots & \ddots & \ddots & \vdots \\
			 \vdots &\vdots & \vdots &\vdots & \vdots & \vdots &\vdots & & \ddots & 1& 0\\
			0 &\hdots & 0 &-b_{p_b,1-r} & \hdots & -b_{1,1-r} & 0 &  \hdots&\hdots & 0& 1
		\end{array}\right)\in \Mc_{r,p+r}(\R).
	\end{equation}
	This matrix form of the numerical boundary conditions \eqref{def:numSchemeCB} will often appear in this article and the matrix $\Bc$ will play a major role in our stability analysis. 
	
	We introduce for $q\in[1,+\infty]$ the Banach space
	$$\Hc_q :=\lc (w_j)_{j\geq 1-r}\in \ell^q\left(\lc j\in\Z,j\geq 1-r\rc,\R\right),\quad \forall j\in\lc1-r,\ppp,0\rc, w_j=\sum_{k=1}^{p_b}b_{k,j}w_k\rc$$
	with the norm
	$$\forall w\in\Hc_q,\quad \left\|w\right\|_{\Hc_q}:=\left\|(w_j)_{j\in\N\backslash\lc0\rc}\right\|_{\ell^q(\N\backslash\lc0\rc)}.$$
	We define the bounded operator $\Tc\in \Lc\left(\Hc_q\right)$ defined by
	\begin{equation}\label{def:Toep}
		\forall w\in\Hc_q,\forall j\in \N\backslash\lc0\rc,\quad (\Tc w)_j:=\sum_{k=-r}^pa_kw_{j+k}.
	\end{equation}
	The values $(\Tc w)_j$ for $j\in\lc1-r,\ppp,0\rc$ are determined by the condition $\Tc w\in\Hc_q$. Using the same reasoning, we also allow ourselves occasionally to say that some sequence $(u_j)_{j\geq 1}$ belongs to $\Hc_q$ without making the values for $j\in\lc1-r,\ppp,0\rc$ precise in order to alleviate the redaction.
	
	The definition of the operator $\Tc$ does not depend on $q$ but the Banach space $\Hc_q$ on which it acts does. We observe that for an initial condition $u^0\in \Hc_q$, the numerical scheme \eqref{def:numScheme} can be rewritten as the following discrete evolution problem
	\begin{equation}\label{def:numSchemeRew}
		\forall n\in \N, \quad u^{n+1}=\Tc u^n.
	\end{equation}
	We thus introduce the standard terminology:
	
	\begin{definition}[$\ell^q$-stability]\label{definition:Stable}
		The numerical scheme \eqref{def:numScheme} is said to be $\ell^q$-stable if there exists some positive constant $C>0$ such that for all $u^0\in \Hc_q$, the solution $(u^n)_{n\in\N}$ of the scheme \eqref{def:numScheme} computed using the initial condition $u^0$ satisfies
		$$\sup_{n\in\N}\left\|u^n\right\|_{\Hc_q}\leq C\left\|u^0\right\|_{\Hc_q}.$$
		This is equivalent to proving that the family of operators $(\Tc^n)_{n\in\N}$ is bounded in $\Lc\left(\Hc_q\right)$.
	\end{definition}

	The purpose of this article is to demonstrate that under a specific type of spectral condition which corresponds to the operator $\Tc$ having a simple eigenvalue of modulus $1$ that is located in its essential spectrum, the numerical scheme \eqref{def:numScheme} is $\ell^1$-stable but $\ell^q$-unstable for every $q>1$. This is in sharp contrast with the result of \cite{CF2} where it is proved that the existence of simple eigenvalues of the operator $\Tc$ on the unit circle outside the essential spectrum maintains the $\ell^q$-stability of the numerical scheme \eqref{def:numScheme}.

	\subsection{Hypotheses and main result}
	
	We will introduce a few objects that will allow us to present the main hypotheses we will make in this article.
	
	\subsubsection{Consistency, dissipativity and diffusivity conditions}
	
	Since the stencil of the numerical scheme \eqref{def:numScheme} is finite, if we consider an initial condition $u^0$ with a support that is located far from the boundary, then the numerical boundary condition \eqref{def:numSchemeCB} will not have any effect on the computation of $u^n$ for small times $n$. We then deduce that the solutions of the numerical scheme \eqref{def:numScheme} are closely linked to the solutions $(u^n_j)_{n\in\N, j\in\Z}$ of the following system on the whole one dimensional lattice $\Z$:
	\begin{equation}\label{def:numSchemeZ}
		\forall n\in \N, \forall j\in \Z, \quad u^{n+1}_j = \sum_{k=-r}^p a_k u^n_{j+k}.
	\end{equation}
	The system \eqref{def:numSchemeZ} corresponds to a numerical scheme for the transport equation on the whole line $\R$
	\begin{align*}
		\begin{split}
			\forall t\geq0,\forall x\in\R,\quad&\partial_tu+v\partial_x u =0,\\
			\forall x\in\R,\quad &u(0,x) = u_0(x).
		\end{split}
	\end{align*}
	The stability of schemes of the form \eqref{def:numSchemeZ} has been studied thoroughly in \cite{Thomee,D-S,CF,Coeuret}. We introduce the Laurent operator $\Lcc\in \Lc(\ell^q(\Z))$ defined by
	\begin{equation}\label{def:Laur}
		\forall w\in\ell^q(\Z),\forall j\in \Z,\quad (\Lcc w)_j:=\sum_{k=-r}^pa_kw_{j+k}
	\end{equation}
	which allows us to rewrite the numerical scheme \eqref{def:numSchemeZ} as a discrete evolution problem
	$$\forall n \in\N, \quad u^{n+1}=\Lcc u^n$$
	just as the operator $\Tc$ allowed us to rewrite the numerical scheme \eqref{def:numScheme} as \eqref{def:numSchemeRew}.
	
	We introduce the symbol $F$ associated with the scheme:
	\begin{equation}\label{def:F}
		\forall \kappa \in \C\backslash\lc0\rc, \quad F(\kappa):= \sum_{j=-r}^p a_j \kappa^j.
	\end{equation}
	We now make the following assumption on the numerical scheme \eqref{def:numScheme} that we consider to discretize the transport equation.
	\begin{hypo}\label{H:scheme}
		We assume that 
		$$F(1)=1, \quad \alpha:=-F^\prime(1)=\lambda v.  \quad \text{(Consistency condition)}$$
		This implies that $\alpha<0$. Moreover, we suppose that
		$$\forall \kappa\in\S^1\backslash\lc1\rc, \quad |F(\kappa)|<1  \quad \text{(Dissipativity condition)}$$
		and that there exist an integer $\mu\in\N\backslash\lc0\rc$ and a complex number $\beta\in \C$ with $\Re(\beta)>0$ such that
		\begin{equation}\label{eq:devAsympF}
			F(e^{it})\underset{t\rightarrow 0}= \exp(-i\alpha t -\beta t^{2\mu}+o(t^{2\mu})). \quad \text{(Diffusivity condition)}
		\end{equation}
	\end{hypo}
	
	In \cite{Thomee}, Hypothesis \ref{H:scheme} and especially the asymptotic expansion \eqref{eq:devAsympF} are crucial for the stability analysis of the numerical scheme \eqref{def:numSchemeZ} in the $\ell^q$-topology on the whole line $\Z$. For instance, the dissipativity condition of Hypothesis \ref{H:scheme} implies the Von Neumann condition and thus the $\ell^2$-stability of the numerical scheme \eqref{def:numSchemeZ} since Fourier analysis implies
	$$ \forall n\in\N\backslash\lc0\rc,\quad \left\|\Lcc^n\right\|_{\Lc(\ell^2(\Z))}=\left\|F^n\right\|_{L^\infty(\S^1)}=1.$$
	To be more precise, the addition of the diffusivity condition given by \eqref{eq:devAsympF} implies that the numerical scheme \eqref{def:numSchemeZ} on the whole lattice $\Z$ is actually $\ell^q$-stable for all $q\in[1,+\infty]$. Thus, Hypothesis \ref{H:scheme} being verified provides a starting point to study the stability of the numerical scheme \eqref{def:numScheme} on the half-line in the $\ell^q$-topology.
	
	\begin{figure}
		\begin{center}
			\begin{tikzpicture}[scale=2]
				\fill[color=gray!20] (-1.5,-1.5) -- (-1.5,1.5) -- (1.5,1.5) -- (1.5,-1.5) -- cycle;
				\draw[color=black!60] (-1,1) node {$\Oc$};
				\fill[color=white] plot [samples = 100, domain=0:2*pi] ({(1-sin(\x r /2)/3)*cos(\x r)},{sin(\x r)/3}) -- cycle ;
				
				\draw (0,0) circle (1);
				\draw[dashed] (1,0) circle (0.3);
				\draw (1.2,-0.5) node {$B_{\widetilde{\varepsilon}_0}(1)$};
				\draw (75:1.2) node {$\S^1$};
				\draw[thick,red] plot [samples = 100, domain=0:2*pi] ({(1-sin(\x r /2)/3)*cos(\x r)},{sin(\x r)/3});
				\draw[red] (0,0.5) node {$F(\S^1)$};
				\draw[blue] (1,0) node {$\bullet$} node[right] {$1$};
				\draw (0,0) node {$\times$} node[below left] {$0$};
			\end{tikzpicture}
		\end{center}
		
		\caption{An example of curve $F(\S^1)$. Hypothesis \ref{H:scheme} implies that the curve $F(\S^1)$ (in red) is inside the closed disk $\overline{\D}$ and touches the boundary $\S^1$ only at $1$. In grey, we have $\Oc$ the unbounded connected component of $\C\backslash F(\S^1)$. In dashed, we find the ball $B_{\widetilde{\varepsilon}_0}(1)$ where we have a more precise spectral decomposition \eqref{eq:decompStableUnstable_near_1} associated with the matrix $\M(z)$ defined by \eqref{def:M}.}
		\label{fig:Spec}
	\end{figure}
	
	In the rest of the paper, the set $\Oc$ represented on Figure \ref{fig:Spec} corresponds to the exterior of the curve $F(\S^1)$, i.e. the unbounded connected component of $\C\backslash F(\S^1)$. The following lemma is a consequence of Hypothesis \ref{H:scheme}.
	
	\begin{lemma}\label{lem:CFL}
		If Hypothesis \ref{H:scheme} is verified, then $\alpha \in]-p,0[$.
	\end{lemma}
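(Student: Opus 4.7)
The sign $\alpha<0$ is immediate from the consistency identity $\alpha=\lambda v$ together with $\lambda>0$ and the standing assumption $v<0$ of the leftgoing transport equation; the substantive content of the lemma is the upper bound $\alpha>-p$, equivalently $F'(1)<p$. My plan is to repackage the consistency and dissipativity parts of Hypothesis \ref{H:scheme} as a Bernstein-type inequality for a single polynomial.

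\textbf{Key reduction.} I will introduce
\[
P(z):=z^{r}F(z)=\sum_{k=0}^{r+p}a_{k-r}\,z^{k},
\]
which, since $a_{-r},a_{p}\neq 0$, is a polynomial of degree $d=r+p$ with real coefficients and $P(1)=F(1)=1$. On the unit circle $|P(z)|=|F(z)|$, so the dissipativity condition gives $\|P\|_{L^{\infty}(\S^{1})}=1$ with the supremum attained only at $z=1$. Differentiating yields $P'(1)=rF(1)+F'(1)=r-\alpha$, a positive real number since $\alpha<0$. Bernstein's inequality applied at $z_{0}=1$, in the form $|P'(z_{0})|\leq d\,\|P\|_{L^{\infty}(\S^{1})}$ for $z_{0}\in\S^{1}$, then gives $r-\alpha\leq r+p$, hence $\alpha\geq -p$. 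An equality would force $P(z)=z^{d}$, hence $F(\kappa)=\kappa^{p}$, whose modulus is identically one on $\S^{1}$; this contradicts the strict dissipativity assumption. Therefore $\alpha>-p$ strictly, which combined with $\alpha<0$ proves $\alpha\in\,]-p,0[$.

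\textbf{Main subtlety.} The only delicate ingredient is the rigidity in Bernstein's inequality at the single boundary point $z=1$. It will follow by the sandwich $d=|P'(1)|\leq\max_{\S^{1}}|P'|\leq d$, which reduces us to the classical global equality case of Bernstein and forces $P$ to be a monomial. Note that only consistency and the strict dissipativity away from $\kappa=1$ are actually used; the finer diffusivity expansion \eqref{eq:devAsympF} plays no role in the proof of this particular lemma.
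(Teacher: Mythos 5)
Your proof is correct. Since the paper itself omits the argument and refers to \cite[Lemma 6]{CF}, there is no in-paper proof to match line by line; but everything you write holds. Briefly auditing it: $P(z)=z^rF(z)$ is a polynomial of degree exactly $d=p+r$ because $a_p\neq 0$; consistency and dissipativity give $\|P\|_{L^\infty(\S^1)}=|P(1)|=1$; and $P'(1)=rF(1)+F'(1)=r-\alpha$ is a strictly positive real number since $\alpha=\lambda v<0$, so Bernstein's inequality $|P'(1)|\le d\,\|P\|_{L^\infty(\S^1)}$ yields $r-\alpha\le r+p$, i.e.\ $\alpha\ge -p$. Your handling of the strict inequality is also sound: the sandwich $d=|P'(1)|\le\max_{\S^1}|P'|\le d$ forces global equality in Bernstein, hence $P(z)=cz^d$, and then $P(1)=1$ gives $c=1$, so $F(\kappa)=\kappa^p$ has constant modulus $1$ on $\S^1$, contradicting strict dissipativity. (A slightly cheaper route to the same contradiction: $P(z)=cz^d$ forces $a_{-r}=0$, which contradicts the standing normalization $a_{-r}\neq 0$, and does not even invoke dissipativity.) Your closing observation that the diffusivity expansion \eqref{eq:devAsympF} is not used is accurate.

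As for comparison with the cited approach: the references in this line of work (Thom\'ee, the Laurent-operator paper \cite{CF}) typically obtain the CFL bound by tracking the roots of $F(\kappa)=z$ as $z$ moves on the positive real axis and invoking the argument principle / Rouch\'e counting underlying the spectral splitting in Lemma~\ref{lem:SpecSpl}, or by a local Taylor analysis of $|F(e^{i\theta})|^2$. Your Bernstein argument is different in spirit: it is a purely extremal polynomial inequality applied at the single contact point $z=1$, requiring no discussion of root locations. It is shorter and self-contained, at the cost of invoking the (classical but nontrivial) rigidity statement in Bernstein's inequality, which you correctly reduce to the global equality case via the sandwich.
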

	
	The proof of Lemma \ref{lem:CFL} is entirely similar to the proof of \cite[Lemma 6]{CF} so we will omit it. The result of Lemma \ref{lem:CFL} is comparable to the well-known Courant-Friedrichs-Lewy condition (see \cite{CFL}) and implies that $p\geq 1$. Consequently, the numerical scheme \eqref{def:numScheme} which satisfies Hypothesis \ref{H:scheme} must take information on the right of $j$ to compute $u_j^{n+1}$.
	
	\subsubsection{Lopatinskii determinant, spectral hypothesis and main result}
	
	We introduce for $z\in\C$ the companion matrix 
	\begin{equation}\label{def:M}
		\M(z) := \begin{pmatrix}
			\frac{z\delta_{p-1,0}-a_{p-1}}{a_p} & \frac{z\delta_{p-2,0}-a_{p-2}}{a_p} & \hdots &\hdots & \frac{z\delta_{-r,0}-a_{-r}}{a_p} \\
			1 & 0 &\hdots &  \hdots & 0 \\
			0 & 1& \ddots &    & \vdots\\
			\vdots & \ddots & \ddots & \ddots & \vdots \\
			0 & \hdots & 0 & 1 & 0 
		\end{pmatrix}\in \Mc_{p+r}(\C).
	\end{equation}
	Since $r,p\geq1$ and $a_{-r}a_p\neq 0$, we observe that the matrix $\M(z)$ is well-defined and invertible for all $z\in \C$ and it depends holomorphically on $z$. The matrix $\M(z)$ appears when we study the eigenvalue problem for $\Hc_q$:
	$$(zId_{\Hc_q}-\Tc)u=0.$$
	It will also serve us later on to describe the so-called spatial Green's function in Section \ref{sec:GS}. The following lemma is due to Kreiss (see \cite{Kreiss}) and describes precisely the spectrum of the matrix $\M(z)$ as $z$ belongs to $\Oc\cup\lc1\rc$.
	
	\begin{lemma}[Spectral Splitting]\label{lem:SpecSpl}
		\begin{itemize}
			\item For $z\in \C$, $\kappa\in\C$ is an eigenvalue of $\M(z)$ if and only if $\kappa\neq 0$ and 
			$$F(\kappa)=z.$$
			\item For $z\in \Oc$, the matrix $\M(z)$ has
			\begin{itemize}
				\item no eigenvalue on $\S^1$,
				\item $r$ eigenvalues in $\D\backslash\lc0\rc$ (that we call stable eigenvalues),
				\item  $p$ eigenvalues in $\Uc$ (that we call unstable eigenvalues).
			\end{itemize} 
			\item The matrix $\M(1)$ has $1$ as a simple eigenvalue, $r$ eigenvalues in $\D\backslash\lc0\rc$ and $p-1$ eigenvalues in $\Uc$.
		\end{itemize} 
	\end{lemma}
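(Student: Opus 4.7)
The plan is to analyze the characteristic polynomial of $\M(z)$ explicitly and combine the spectral information with the consistency and dissipativity conditions from Hypothesis \ref{H:scheme}.

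\textbf{Part 1.} Since $\M(z)$ is in companion form, a direct expansion (for instance along the last column, or by induction on $p+r$) yields the characteristic polynomial
$$\chi_z(\kappa) := \det(\kappa\, I_{p+r} - \M(z)) = \frac{\kappa^r}{a_p}\,(F(\kappa) - z).$$
The eigenvalues of $\M(z)$ are therefore the roots of $\kappa^r(F(\kappa)-z)$. Writing
$$\kappa^r(F(\kappa)-z) = a_p\kappa^{p+r} + \sum_{-r\leq j\leq p-1} a_j\kappa^{j+r} - z\kappa^r,$$
the constant coefficient equals $a_{-r}\neq 0$, so $\kappa=0$ is never a root. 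This settles the first bullet point.

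\textbf{Part 2.} Fix $z\in\Oc$. By definition of $\Oc$, one has $z\notin F(\S^1)$, so $F(\kappa)=z$ has no solution on $\S^1$ and $\M(z)$ has no eigenvalue on $\S^1$. To count the stable and unstable eigenvalues, first pick some $z_0\in \Oc$ with $|z_0|$ very large. A Rouché comparison on the circles $|\kappa|=R$ with $R^p\gg |z_0|/|a_p|$ and $|\kappa|=\rho$ with $\rho^r\ll |a_{-r}|/|z_0|$ shows that $\chi_{z_0}$ has $p$ roots of modulus comparable to $(|z_0|/|a_p|)^{1/p}$ (hence in $\Uc$) and $r$ roots of modulus comparable to $(|a_{-r}|/|z_0|)^{1/r}$ (hence in $\D\setminus\{0\}$). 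Since $\Oc$ is open and connected and the roots depend continuously on $z$ without ever crossing $\S^1$ as $z$ moves in $\Oc$, the counts $r$ stable and $p$ unstable persist everywhere in $\Oc$.

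\textbf{Part 3.} The consistency condition yields $F(1)=1$, so $\kappa=1$ is an eigenvalue of $\M(1)$. Since $F'(1)=-\alpha\neq 0$, it is a simple root of $F(\kappa)-1$, hence a simple root of $\chi_1$ (the factor $\kappa^r$ contributes no multiplicity at $\kappa=1$). The dissipativity condition ensures that $1$ is the \emph{only} point of $\S^1$ where $F$ takes the value $1$, so $\M(1)$ has no other eigenvalue on $\S^1$. To allocate the remaining $p+r-1$ eigenvalues between $\D$ and $\Uc$, perturb $z$: for small $\epsilon>0$, the point $z_\epsilon := 1+\epsilon$ lies in $\Uc\subset\Oc$, so by Part 2 the matrix $\M(z_\epsilon)$ has exactly $r$ eigenvalues in $\D$ and $p$ in $\Uc$. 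The implicit function theorem applied to $F$ at $\kappa=1$ yields a local branch
$$\kappa(z_\epsilon) = 1 - \frac{\epsilon}{\alpha} + O(\epsilon^2) = 1 + \frac{\epsilon}{|\alpha|} + O(\epsilon^2),$$
using $\alpha<0$, whence $|\kappa(z_\epsilon)|>1$: this branch is \emph{unstable}. As $\epsilon\to 0^+$, this unstable eigenvalue converges to $1\in\S^1$, while continuity combined with the local shape of $F(\S^1)$ near $1$ (which lies in $\overline{\D}$) prevents any other eigenvalue from reaching $\S^1$. Consequently $\M(1)$ has $r$ stable eigenvalues, the simple eigenvalue $1$, and exactly $p-1$ unstable eigenvalues.

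\textbf{Main obstacle.} The only delicate step is the allocation in Part 3: pure continuity cannot determine on which side of $\S^1$ the eigenvalue that ends up at $1$ came from. The sign of $\alpha$ furnished by the consistency condition, together with the explicit perturbation $\kappa(z_\epsilon) \approx 1 + \epsilon/|\alpha|$, is precisely what breaks the symmetry and identifies the disappearing eigenvalue as an unstable one.
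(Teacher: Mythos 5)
Your proof is correct; the paper itself defers the proof to \cite[Lemma~3]{CF2}, and your argument---the companion-matrix identity $\det(\kappa I - \M(z)) = \kappa^r(F(\kappa)-z)/a_p$, the non-vanishing constant term $a_{-r}$, a Rouch\'e root count at one base point of $\Oc$ propagated by continuity and connectedness of $\Oc$, and the perturbative branch $\kappa(1+\epsilon)\approx 1+\epsilon/|\alpha|$ at $z=1$---is the standard route Kreiss's lemma admits. One small tightening in Part~2: confining all roots to the annulus $\rho\le|\kappa|\le R$ via Rouch\'e on the two bounding circles does not by itself split the $p+r$ roots into the two groups; you need a Rouch\'e comparison on an intermediate circle that separates them. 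The cleanest choice is $|\kappa|=1$ itself: for any $z_0$ with $|z_0|>1$, the dissipativity bound $\max_{\S^1}|F|=1$ gives $|z_0\kappa^r|>|\kappa^rF(\kappa)|$ on $\S^1$, so $\kappa^r(F(\kappa)-z_0)$ has exactly $r$ zeros in $\D$ (all nonzero) and the remaining $p$ lie in $\Uc$, with no need to take $|z_0|$ asymptotically large. (This also lets you take $z_0=1+\epsilon$ directly, effectively merging Parts~2 and~3.) Your closing observation---that the sign of $\alpha$, not bare continuity, determines on which side of $\S^1$ the central eigenvalue approaches $1$---is precisely the point that distinguishes the present leftgoing setting from the rightgoing one in \cite{CF2}.
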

	
	A complete proof of Lemma \ref{lem:SpecSpl} can be found in \cite[Lemma 3]{CF2}. 
	
	For $z\in\Oc$, we define $E^s(z)$ (resp. $E^u(z)$) the stable (resp. unstable) subspace of $\M(z)$ which corresponds to the subspace spanned by the generalized eigenvectors of $\M(z)$ associated with eigenvalues in $\D$ (resp. $\Uc$). We therefore know that the subspace $E^s(z)$ (resp. $E^u(z)$) has dimension $r$ (resp. $p$) thanks to Lemma \ref{lem:SpecSpl} and we have the decomposition
	\begin{equation}\label{eq:decompStableUnstable_on_Oc}
		\C^{p+r} = E^s(z) \oplus E^u(z).
	\end{equation} 
	The associated projectors are denoted $\pi^s(z)$ and $\pi^u(z)$. Those linear maps commute with $\M(z)$ and depend holomorphically on $z\in \Oc$ (see \cite{Kato}). 
	
	We now need to clarify the situation near $z=1$. Using Lemma \ref{lem:SpecSpl}, we know that $1$ is a simple eigenvalue of $\M(1)$ and that the matrix $\M(1)$ has $r$ eigenvalues in $\D\backslash\lc0\rc$ and $p-1$ eigenvalues in $\Uc$. Therefore, there exist a radius $\widetilde{\varepsilon}_0>0$ and a holomorphic function $\kappa:B_{\widetilde{\varepsilon}_0}(1)\rightarrow \C$ such that $\kappa(1)=1$ and for all $z\in B_{\widetilde{\varepsilon}_0}(1)$, $\kappa(z)$ is a simple eigenvalue of $\M(z)$, $\M(z)$ has $r$ eigenvalues distinct from $\kappa(z)$ in $\D\backslash\lc0\rc$ and $p-1$ eigenvalues distinct from $\kappa(z)$ in $\Uc$. We then have that for $z\in B_{\widetilde{\varepsilon}_0}(1)$, the vector
	$$R_c(z):= \begin{pmatrix}\kappa(z)^{p+r-1} \\ \vdots \\ 1\end{pmatrix}\in\C^{p+r}$$
	is a nonzero eigenvector of $\M(z)$ associated with $\kappa(z)$. For $z\in B_{\widetilde{\varepsilon}_0}(1)$, we define $E^c(z):=\mathrm{Span}(R_c(z))$ and $E^{ss}(z)$ (resp. $E^{su}(z)$) the strictly stable (resp. strictly unstable) subspace of $\M(z)$ which corresponds to the subspace spanned by the generalized eigenvectors of $\M(z)$ associated with eigenvalues distinct from $\kappa(z)$ in $\D$ (resp. $\Uc$). We therefore know that $E^{ss}(z)$ (resp. $E^{su}(z)$) has dimension $r$ (resp. $p-1$) and we have the decomposition
	\begin{equation}\label{eq:decompStableUnstable_near_1}
		\C^{p+r} = E^{ss}(z)\oplus E^c(z) \oplus E^{su}(z).
	\end{equation} 
	The associated projectors are denoted $\pi^{ss}(z)$, $\pi^c(z)$ and $\pi^{su}(z)$. Again, those linear maps commute with $\M(z)$ and depend holomorphically on $z\in \Oc$ (see \cite{Kato}).
	
	For $z\in B_{\widetilde{\varepsilon}_0}(1)\cap\Oc$, Lemma \ref{lem:SpecSpl} implies that $\kappa(z)\in\Uc$. In other words, the "central" eigenvalue $\kappa(z)$ that is close to $1$ comes from $\Uc$ as $z\in\Oc$ approaches $1$. Therefore, for $z\in B_{\widetilde{\varepsilon}_0}(1)\cap\Oc$, we can link the two decompositions \eqref{eq:decompStableUnstable_on_Oc} and \eqref{eq:decompStableUnstable_near_1} of the spectrum of $\M(z)$ and observe that
	\begin{equation}\label{eq:Compa_des_decompos}
		E^{s}(z)=E^{ss}(z) \quad\text{ and }\quad E^u(z)=E^c(z)\oplus E^{su}(z),
	\end{equation}
	so
	\begin{equation*}
		\pi^{s}(z)=\pi^{ss}(z) \quad\text{ and }\quad \pi^u(z)=\pi^c(z)+\pi^{su}(z).
	\end{equation*}
	This allows us to extend holomorphically the vector spaces $E^s$ and $E^u$ and the projectors $\pi^s$ and $\pi^u$ in a neighborhood of $B_{\widetilde{\varepsilon}_0}(1)$. Even if we have to take $\widetilde{\varepsilon}_0>0$ smaller, we can introduce a family of holomorphic functions 
	$$e_1,\ppp,e_r:B_{\widetilde{\varepsilon}_0}(1)\rightarrow \C^{p+r}$$
	such that for all $z\in B_{\widetilde{\varepsilon}_0}(1)$, the family $(e_1(z),\ppp,e_r(z))$ is a basis of $E^s(z)=E^{ss}(z)$ (see \cite[Section II.4.2.]{Kato}). We then define the so-called Lopatinskii determinant $\Delta$ near $1$ by the following formula:
	\begin{equation}\label{def:Lopat}
		\forall z\in B_{\widetilde{\varepsilon}_0}(1), \quad \Delta(z):=\det(\Bc e_1(z),\ppp,\Bc e_r(z)),
	\end{equation}
	with $\Bc$ the matrix defined as \eqref{def:B}. In this way, the function $\Delta$ depends holomorphically on $z$ near $1$. The Lopatinskii determinant $\Delta$ does depend on the choice of the basis $(e_1,\ppp,e_r)$ but we will only be interested in its zeroes, and these are independent of the choice of the basis. 
	
	We need to point out that for all $\underline{z}\in\Oc$, we can also define a holomorphic basis of $E^s(z)$ and thus a Lopatinskii determinant $\Delta(z)$ for $z$ in a neighborhood of $\underline{z}$. The Lopatinskii determinant plays in our situation a similar role as the characteristic polynomial for a matrix as it allows to detect the eigenvalues of the operator $\Tc$. The stability of the numerical scheme \eqref{def:numScheme} depends on the vanishing points of the Lopatinskii determinant $\Delta$. Let us outline some terminology and results (see \cite{GKO}):
	
	\begin{itemize}
		\item[$\blacktriangleright$] The so-called Godunov-Ryabenkii condition introduced in \cite{GodRyab} states that the Lopatinskii determinant does not vanish on $\Uc$, i.e. the operator $\Tc$ acting on $\Hc_q$ does not have any eigenvalue outside of the closed unit disk. It is a necessary stability condition for the numerical scheme \eqref{def:numScheme}. 
		\item[$\blacktriangleright$] If the Lopatinskii determinant does not vanish on the whole set $\overline{\Uc}$, this means that the so-called uniform Kreiss-Lopatinskii condition is satisfied (see \cite{GKS,CoulombelTrieste}). In that case, the main result in \cite{Wu} shows that the operator $\Tc$ on $\Hc_2$ is power bounded.
		\item[$\blacktriangleright$] In \cite{CF2}, the authors study the stability of the explicit numerical schemes for the scalar rightgoing ($v>0$) transport equation on the positive half-line \eqref{def:PDE}. The authors make the assumption that the Godunov-Ryabenkii condition is satisfied and that the Lopatinskii determinant has a finite number of simple zeroes on $\S^1$ that are not in $F(\S^1)$ (i.e. that are different from $1$). Thus, the uniform Kreiss-Lopatinskii condition is not satisfied, and yet the authors prove semigroup estimates that lead to the $\ell^q$-stability of the numerical scheme \eqref{def:numScheme} for all $q\in[1,+\infty]$.		
	\end{itemize}

	In this paper, we make the following assumption.
	
	\begin{hypo}\label{H:spec}
		\begin{itemize}
			\item We suppose that for all $z\in \overline{\Uc}\backslash\lc1\rc$, we have
			$$E^s(z)\oplus \ker \Bc=\C^{p+r}.$$
			In particular, this implies that the Lopatinskii determinant does not vanish anywhere on the set $\overline{\Uc}\backslash\lc1\rc$.
			\item We assume that $1$ is a simple zero of the Lopatinskii determinant, i.e.
			$$\Delta(1)=0,\quad \Delta^\prime(1)\neq0.$$
			We will also consider $\widetilde{\varepsilon}_0$ small enough so that the function $\Delta$ only vanishes in $1$, which implies that
			$$\forall  z\in B_{\widetilde{\varepsilon}_0}(1)\backslash\lc1\rc, \quad E^s(z)\oplus \ker \Bc=\C^{p+r}.$$
		\end{itemize}
	\end{hypo}
	
	\begin{remark}\label{remH:spec}
		We would like to make two observations on Hypothesis \ref{H:scheme}. First, noticing that for $z\in \Oc\cup B_{\widetilde{\varepsilon}_0}(1)$, we have $\dim E^s(z)=r$ and $\dim \ker \Bc=p$, it is interesting to observe for future purposes (see Lemma \ref{lem:SpecTc}) that
		$$E^s(z)\oplus \ker \Bc=\C^{p+r} \Leftrightarrow E^s(z)\cap \ker \Bc=\lc0\rc.$$
		This also means that $\Bc_{|E^s(z)}$ is an isomorphism from $E^s(z)$ to $\C^r$. 
		
		Second, we observe that proving for some concrete choice of numerical scheme that Hypothesis \ref{H:spec} is verified can be challenging (see Section \ref{sec:Num} for simple examples). We would like to point out that in the recent papers \cite{BBS,BBS2}, though the study is done for numerical schemes applied to the rightgoing ($v>0$) transport equation \eqref{def:PDE} which does not coincide with the study of the present paper, the authors present a reliable way to study the verification of the Uniform Kreiss-Lopatinskii condition by counting the number of zeroes of a modified version of the Lopatinskii determinant $\Delta$.
	\end{remark}
	
	Let us settle on the position of Hypothesis \ref{H:spec} compared to the previously stated results. If Hypothesis \ref{H:spec} is verified, then the Godunov-Ryabenkii condition is verified but not the uniform Kreiss-Lopatinskii condition, i.e. the Lopatinskii determinant $\Delta$ does not vanish on the set $\Uc$ but it vanishes on the unit circle and more precisely at $1$, in the essential spectrum of $\Tc$. A similar situation is tackled in \cite{CF2} but with two differences:
	\begin{itemize}
		\item In \cite{CF2}, the transport equation \eqref{def:PDE} which is approached using the numerical scheme \eqref{def:numScheme} is rightgoing (i.e. $v>0$) which is in direct opposition with the case handled in this paper. The main effect of this change is that the "central" eigenvalue $\kappa(z)$ of the matrix $\M(z)$ that is close to $1$ as $z\in\Oc$ approaches $1$ no longer comes from $\Uc$, but rather from the open unit disk $\D$. This in turn changes the equality \eqref{eq:Compa_des_decompos} on the stable and unstable subspaces of the matrix $\M(z)$.
		\item The second and main difference however is on the position of the zeroes of the Lopatinskii determinant $\Delta$ which has a direct impact on the analysis of the so-called spatial Green's function, a useful tool that will be defined in Section \ref{sec:GS}. In \cite{CF2}, the function $\Delta$ can vanish at a finite number of points on the unit circle but not at the point $1$. This allows the authors to holomorphically extend the spatial Green's function in a neighborhood of the interest point $1$, through the essential spectrum of $\Tc$. However, in the present paper, we assume that $1$ is a simple zero of the Lopatinskii determinant $\Delta$ which restricts us to only meromorphically extend the spatial Green's function in a neighborhood of $1$ (see Lemma \ref{lem:GS_près}), which will toughen the computations. It is also interesting to point out that the type of spectral configuration we consider here in Hypothesis \ref{H:spec} with a simple eigenvalue at $1$ which lies in the essential spectral $\Tc$ also occurs in the study of the linear stability of discrete shock profiles for conservation law approximations (see \cite{Serre,Godillon}). We detail a little more on the later point in Remark \ref{rem:Cons} after introducing the temporal Green's function.
	\end{itemize}
	
	The goal is now to study the stability of the scheme \eqref{def:numScheme} under Hypotheses \ref{H:scheme} and \ref{H:spec}. The main result of this paper is the following theorem.
	
	\begin{theorem}\label{th:Stab}
		We assume that Hypotheses \ref{H:scheme} and \ref{H:spec} are verified. 
		\begin{itemize}
			\item If $\Bc\begin{pmatrix}
				1 & \hdots & 1
			\end{pmatrix}^T\in\Bc E^{s}(1)$, then the numerical scheme \eqref{def:numScheme} is $\ell^q$-stable for all $q\in[1,+\infty]$ (see Definition \ref{definition:Stable} of $\ell^q$-stability).
			\item If $\Bc\begin{pmatrix}
				1 & \hdots & 1
			\end{pmatrix}^T\notin\Bc E^{s}(1)$, then the numerical scheme \eqref{def:numScheme} is $\ell^1$-stable but $\ell^q$-unstable for all $q\in]1,+\infty]$. Furthermore, for all $q\in ]1,+\infty]$, there exists a positive constant $C$ such that
			\begin{equation}\label{rateofexploTqn}
				\forall n\in\N\backslash\lc0\rc, \quad \left\|\Tc^n\right\|_{\Lc(\Hc_q)}\geq Cn^{1-\frac{1}{q}}.
			\end{equation}
		\end{itemize}
	\end{theorem}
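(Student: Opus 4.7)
The plan is to follow the spatial-dynamics approach of \cite{CF2} and handle the new difficulty that the Lopatinskii determinant vanishes at $z=1$, which sits inside the essential spectrum of $\Tc$. The starting point is the functional-calculus identity
$$\Tc^n \;=\; \frac{1}{2\pi i}\oint_\Gamma z^n (zId-\Tc)^{-1}\,dz$$
for any contour $\Gamma$ enclosing $\sigma(\Tc)\subset\overline{\D}$. The kernel of $(zId-\Tc)^{-1}$ is the spatial Green's function $G(z;j,k)$, the unique element of $\Hc_q$ in the variable $j$ satisfying $(zId-\Tc)G(z;\cdot,k)=\delta_k$. On $\Oc$, the splitting $\C^{p+r}=E^s(z)\oplus E^u(z)$ of Lemma~\ref{lem:SpecSpl} gives a closed form for $G(z;j,k)$ built from the stable/unstable eigenvectors of $\M(z)$, with boundary-matching coefficient proportional to $1/\Delta(z)$.

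The first key step is a pointwise study of $G(z;j,k)$ near $z=1$. The refined splitting $\C^{p+r}=E^{ss}(z)\oplus E^c(z)\oplus E^{su}(z)$ of \eqref{eq:decompStableUnstable_near_1}, which is holomorphic on $B_{\widetilde{\varepsilon}_0}(1)$, lets me rewrite the representation on $\Oc\cap B_{\widetilde{\varepsilon}_0}(1)$ using only objects that extend holomorphically through $\S^1$. The simple zero of $\Delta$ at $1$ then produces a meromorphic continuation of $G(z;j,k)$ to $B_{\widetilde{\varepsilon}_0}(1)$ with a simple pole at $z=1$. A direct computation identifies the residue, up to a $k$-dependent factor geometrically bounded in $k$, as the tensor product of the central eigenvector $R_c(1)=(1,\ldots,1)^T$ with a dual boundary vector, multiplied by the scalar
$$c_\infty \;\propto\; \frac{\det\bigl[\Bc e_1(1)\;\big|\;\cdots\;\big|\;\Bc e_r(1)\;\big|\;\Bc(1,\ldots,1)^T\bigr]}{\Delta'(1)},$$
which vanishes exactly when $\Bc(1,\ldots,1)^T\in\Bc E^s(1)$.

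The second key step is contour deformation. I would push $\Gamma$ towards the essential spectrum $F(\S^1)\subset\overline{\D}$, letting the contour detour into the holomorphic region $\C\backslash F(\S^1)$ only through a small arc of diffusive scale $\sim n^{-1/(2\mu)}$ around $z=1$, dictated by the expansion $F(e^{it})=\exp(-i\alpha t-\beta t^{2\mu}+o(t^{2\mu}))$. Away from $1$, the dissipativity of Hypothesis~\ref{H:scheme} gives exponential decay in $n$; near $1$, a Thomée-type stationary-phase analysis (following \cite{Thomee,CF,CF2,Coeuret}) yields generalized Gaussian bounds for the resulting interior piece $G^n_{\mathrm{int}}$, uniformly bounded in every $\Lc(\Hc_q)$. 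The meromorphic continuation of Step~2 produces an additional boundary piece
$$G^n_{\mathrm{bd}}(j,k)\;=\;c_\infty\,\Phi_j\,\Psi^{(n)}_k,$$
of rank one, with $\Phi_j$ geometrically localized near $j=0$ (from the strict-stable modes in the residue decomposition), and $\Psi^{(n)}_k$ bounded in $\ell^\infty$ uniformly in $n$ and essentially equal to $1$ on an activation window of length $\sim n$ generated by the interaction of $z^n$ with the embedded pole.

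In the case $c_\infty=0$, only $G^n_{\mathrm{int}}$ survives, giving $\ell^q$-stability for all $q\in[1,+\infty]$. In the case $c_\infty\neq 0$, the rank-one structure gives $\|G^n_{\mathrm{bd}}\|_{\Lc(\Hc_q)}\leq\|\Phi\|_{\ell^q}\|\Psi^{(n)}\|_{\ell^{q'}}\lesssim n^{1-1/q}$ via Hölder's inequality; the $\ell^1$-stability corresponds to the endpoint $q=1$ where $\|\Psi^{(n)}\|_{\ell^\infty}$ is bounded uniformly in $n$; the matching lower bound \eqref{rateofexploTqn} comes from testing $\Tc^n$ on $u^0_k=\mathrm{sign}(\Psi^{(n)}_k)$, which has $\|u^0\|_{\Hc_q}\sim n^{1/q}$ while $\|\Tc^n u^0\|_{\Hc_q}\gtrsim n\|\Phi\|_{\ell^q}$. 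The hardest part I expect is the meromorphic extension of $G(z;j,k)$ across the essential spectrum and the careful Laplace-type expansion of the residue contribution: contrary to \cite{CF2}, where the pole sits outside $F(\S^1)$ and $G$ extends holomorphically, here the pole is embedded in the essential spectrum of $\Tc$, so separating the central, strict-stable, and strict-unstable modes of $\M(z)$ and matching them against the Lopatinskii determinant to isolate the rank-one residue is the technical heart of the argument, in the spirit of the continuous viscous-shock analysis of \cite{MasciaZumbrun}.
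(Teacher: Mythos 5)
Your proposal follows the same spatial-dynamics strategy as the paper: meromorphic extension of the spatial Green's function across the essential spectrum with a simple pole at $z=1$, residue controlled by whether $\Bc(1,\ldots,1)^T\in\Bc E^s(1)$, contour deformation at diffusive scale near $1$, and a rank-one boundary-layer piece whose activation window of length $\sim n$ gives $\ell^1$-stability and the $n^{1-1/q}$ lower bound via Hölder's inequality and a truncated test function. The paper differs only in presentation, proving the Green's function estimate (Theorem~\ref{th:Green}) as a standalone result in Sections~\ref{sec:GS}--\ref{sec:GT} and then deducing Theorem~\ref{th:Stab} in Section~\ref{sec:Cor} by splitting $\Tc^n = K_{q,n} + L_{q,n}$, where $K_{q,n}$ gathers the error, free-space, and $\Rc^u$ contributions (uniformly bounded in $\Lc(\Hc_q)$ by Young and Hölder) and $L_{q,n}$ is the rank-one operator with kernel $E_{2\mu}^\beta\bigl(\tfrac{j_0+n\alpha}{n^{1/(2\mu)}}\bigr)\Rc^c(j)$, tested on $u_J=\sum_{j_0=1}^J\delta_{j_0}$ with $J\sim n$.
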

	
	This is in direct opposition with the result of \cite{CF2} which proved that the existence of simple zeroes of the Lopatinskii determinant $\Delta$ on the unit circle that are different from $1$ does not prevent the $\ell^q$-stability of the numerical scheme \eqref{def:numScheme} for all $q\in[1,+\infty]$. Let us observe that Hypothesis \ref{H:spec} implies that $\Bc E^s(1)$ is a hyperplane of $\C^r$. Thus, the condition $\Bc (1\hdots 1)^T\in \Bc E^s(1)$ is "rarely" verified. In Section \ref{sec:Num}, we will present a concrete example for each possibility.
	
	\subsubsection{Temporal Green's function}
	
	The proof of Theorem \ref{th:Stab} relies on a precise description of the temporal Green's function associated with $\Tc$ given in Theorem \ref{th:Green} below. We will start by defining the temporal Green's function associated with $\Tc$.
	
	For all $j_0\in \N\backslash\lc0\rc$, we introduce the Dirac mass $\delta_{j_0}\in \bigcap_{q\in[1,+\infty]}\Hc_q$ such that
	\begin{equation}\label{def:DiracT}
		\forall j\in\N\backslash\lc0\rc,\quad  (\delta_{j_0})_j:=\lc\begin{array}{cc}1 & \text{ if }j=j_0, \\ 0 & \text{ else.}\end{array}\right.
	\end{equation}
	We then define the temporal Green's function $\Gc(n,j_0,j)$ of the operator $\Tc$ as 
	\begin{equation}\label{def:GTToep}
		\forall n\in \N,\quad \Gc(n,j_0,\cdot):=\Tc^n\delta _{j_0}\in \bigcap_{q\in[1,+\infty]}\Hc_q .
	\end{equation}
	For any initial condition $u^0\in\Hc_q$, the solution $(u^n)_{n\in\N}$ of \eqref{def:numScheme} can be written as
	\begin{equation}\label{eq:u^n//Gcc}
		\forall n\in\N, \forall j\geq 1-r, \quad u^n_j=(\Tc^n u^0)_j = \sum_{j_0\geq 1} u^0_{j_0}\Gc(n,j_0,j).
	\end{equation}
	Thus, the temporal Green's function $\Gc(n,j_0,j)$ allows us to analyze the behavior of solutions of the numerical scheme \eqref{def:numScheme} over time. Let us observe that the sequences $\delta_{j_0}$ and $\Gc(n,j_0,\cdot)$ are finitely supported. More precisely, we recursively prove that:
	\begin{equation*}
		\forall n\in\N, \forall j_0, j\in\N\backslash\lc0\rc, \quad j-j_0\notin\lc-np,\ppp,nr\rc\Rightarrow \Gc(n,j_0,j)=0.
	\end{equation*}
	
	Just as we introduced the Dirac mass and the temporal Green's function $\Gc(n,j_0,j)$ of the operator $\Tc$, we introduce the Dirac mass $\widetilde{\delta}\in \bigcap_{q\in[1,+\infty]} \ell^q(\Z)$ and the temporal Green's function $\widetilde{\Gc}(n,j)$ of the convolution operator $\Lcc$ defined by:
	\begin{equation}\label{def:DiracL}
		\forall j\in \Z, \quad \widetilde{\delta}_j:=\lc\begin{array}{cc}1 & \text{ if }j=0, \\ 0 & \text{ else,}\end{array}\right.
	\end{equation}
	and
	\begin{equation}\label{def:GTLaur}
		\forall n\in \N,\quad \widetilde{\Gc}(n,\cdot):=\Lcc^n\widetilde{\delta}\in  \bigcap_{q\in[1,+\infty]}\ell^q(\Z) .
	\end{equation}
	
	We also introduce the functions $H_{2\mu}^\beta,E_{2\mu}^\beta: \R\rightarrow \C$, where $\mu\in\N\backslash\lc0\rc$ and $\beta \in \C$ has positive real part, which are defined as
	\begin{align}\label{def:H2mu_et_E2mu}
		\begin{split}
			\forall x \in \R,\quad &H_{2\mu}^\beta(x) := \frac{1}{2\pi} \int_\R e^{ixu}e^{-\beta u^{2\mu}}du,\\
			\forall x \in \R,\quad &E_{2\mu}^\beta(x) := \int_x^{+\infty} H^\beta_{2\mu}(y)dy.
		\end{split}
	\end{align}
	We will recall below why $H_{2\mu}^\beta\in L^1(\R,\R)$. We call the functions $H_{2\mu}^\beta$ generalized Gaussians and the functions $E_{2\mu}^\beta$ generalized Gaussian error functions since for $\mu=1$, we have 
	$$\forall x \in \R,\quad H_{2}^\beta(x)=\frac{1}{\sqrt{4\pi\beta}}e^{-\frac{x^2}{4\beta}}.$$
	Noticing that the function $H_{2\mu}^\beta$ is the inverse Fourier transform of $u\mapsto e^{-\beta u^{2\mu}}$, we observe that
	\begin{equation}\label{eq:E_en_-infty}
		\lim_{x\rightarrow -\infty}E_{2\mu}^\beta(x)=\int_{-\infty}^{+\infty} H^\beta_{2\mu}(y)dy= 1.
	\end{equation}
	
	The temporal Green's function $\widetilde{\Gc}(n,j)$ of the operator $\Lcc$ has been studied thoroughly in \cite{Thomee,D-S,R-S,CF,Coeuret}. For instance, under Hypothesis \ref{H:scheme}, it is known that the family $\left(\widetilde{\Gc}(n,\cdot)\right)_{n\in\N}$ is bounded in $\ell^1(\Z)$. Furthermore, in \cite{R-S,Coeuret}, it is proved that the leading order of the asymptotic behavior of $\widetilde{\Gc}(n,j)$ when $n$ becomes large is the generalized Gaussian wave which travels at speed $\alpha$. For instance, the main result in \cite{R-S} gives:
	\begin{equation}\label{eq:GTLaur//Gauss}
		\widetilde{\Gc}(n,j)=\frac{1}{n^\frac{1}{2\mu}}H_{2\mu}^\beta\left(\frac{j-n\alpha}{n^\frac{1}{2\mu}}\right)+o\left(\frac{1}{n^\frac{1}{2\mu}}\right)
	\end{equation}
	where the remainder is uniform with respect to $j\in\Z$.
	
	In this paper, we aim to prove the following theorem which describes the long time behavior of the temporal Green's function $\Gc(n,j_0,j)$:
	
	\begin{theorem}\label{th:Green}
		Under Hypotheses \ref{H:scheme} and \ref{H:spec}, there exist two sequences $(\Rc^c(j))_{j\in\N\backslash\lc0\rc}$ and $(\Rc^u(j_0,j))_{j_0,j\in\N\backslash\lc0\rc}$ and two constants $C,c>0$ such that if we define for all $n,j_0,j\in\N\backslash\lc0\rc$,
		\begin{equation}\label{def:Err}
			\mathrm{Err}(n,j_0,j):= \Gc(n,j_0,j)- \widetilde{\Gc}(n,j-j_0) - \ind_{np\geq j_0}\Rc^u(j_0,j)-E_{2\mu}^\beta\left(\frac{j_0+n\alpha}{n^\frac{1}{2\mu}}\right)\Rc^c(j),
		\end{equation}
		then, we have that:
		\begin{equation}\label{in:Err}
			\forall n,j_0,j\in\N\backslash\lc0\rc,\quad  |\mathrm{Err}(n,j_0,j)|\leq \frac{Ce^{-cj}}{n^\frac{1}{2\mu}}\exp\left(-c\left(\frac{|n\alpha+j_0|}{n^\frac{1}{2\mu}}\right)^\frac{2\mu}{2\mu-1}\right).
		\end{equation}
		Furthermore, there exist two positive constants $C,c$ such that
		\begin{equation}
			\forall j_0,j\in\N\backslash\lc0\rc,\quad \left|\Rc^u(j_0,j)\right|\leq Ce^{-c(j+j_0)}\quad \text{and}\quad \left|\Rc^c(j)\right|\leq Ce^{-cj}.\label{in:Rc}
		\end{equation}
		Finally, the sequence $\Rc^c$ satisfies that
		\begin{equation}\label{condRc}
			\Rc^c = 0 \Leftrightarrow \Bc(1 \hdots 1)^T\in\Bc E^s(1).
		\end{equation}
	\end{theorem}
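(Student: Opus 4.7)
The plan is to follow the spatial dynamics/Laplace inversion strategy of \cite{ZH,CF2}. For $R>\left\|\Tc\right\|_{\Lc(\Hc_q)}$, the Dunford formula gives
\begin{equation*}
\Gc(n,j_0,j)=\frac{1}{2\pi i}\oint_{|z|=R}z^n G(z,j_0,j)\,dz,
\end{equation*}
where $G(z,j_0,\cdot):=((zId-\Tc)^{-1}\delta_{j_0})$ is the spatial Green's function. The first step is to construct $G(z,j_0,j)$ explicitly: the identity $(zId-\Tc)G(z,j_0,\cdot)=\delta_{j_0}$ is a finite linear recurrence in $j$ driven by $\M(z)$, and under Hypothesis~\ref{H:spec} (so that $\Bc_{|E^s(z)}$ is an isomorphism onto $\C^r$ by Remark~\ref{remH:spec}), one solves it by matching the stable direction $E^s(z)$ to $\ker\Bc$ through $\Bc$, yielding a formula of the schematic form
\begin{equation*}
G(z,j_0,j)=\widetilde{G}(z,j-j_0)-\bigl[\Bc_{|E^s(z)}\bigr]^{-1}\text{(boundary terms involving }\M(z)^{j_0}\text{ and }\M(z)^{j}\text{)},
\end{equation*}
where $\widetilde{G}(z,\ell)$ is the whole-line spatial Green's function of $\Lcc$. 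This makes explicit the fact that the only singularity of $z\mapsto G(z,j_0,j)$ inside $\Uc\cup B_{\widetilde{\varepsilon}_0}(1)\setminus\{1\}$ comes from zeroes of the Lopatinskii determinant, hence, by Hypothesis~\ref{H:spec}, only from a simple pole at $z=1$.

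Next I would deform the contour $\{|z|=R\}$ inside the spectrum. Away from a small disk $B_\delta(1)$, the curve $F(\S^1)$ is strictly inside $\overline{\D}$ by the dissipativity condition, so $G(z,j_0,j)$ extends holomorphically across $\S^1\setminus B_\delta(1)$ and one can push the contour to a curve of the form $\{|z|=1-\eta\}\setminus B_\delta(1)$ joined to a small arc near $1$. The whole-line part of $G$ reconstructs $\widetilde{\Gc}(n,j-j_0)$ after contour deformation using the formula \eqref{eq:GTLaur//Gauss}, and the portion coming from stable eigenvalues of $\M(z)$ near the boundary, being exponentially decaying in both $j$ and $j_0$, is absorbed into the finite-rank boundary correction $\ind_{np\geq j_0}\Rc^u(j_0,j)$ (the factor $\ind_{np\geq j_0}$ comes from the finite propagation speed of $\Tc^n$ on Dirac masses).

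The delicate step is the contribution of the arc near $z=1$. In the disk $B_{\widetilde{\varepsilon}_0}(1)$ I use the holomorphic decomposition $E^{ss}(z)\oplus E^c(z)\oplus E^{su}(z)$ and the fact that the central eigenvalue satisfies, after inverting \eqref{eq:devAsympF}, the expansion
\begin{equation*}
\kappa(z)\underset{z\to 1}{=}\exp\Bigl(-\tfrac{\log z}{\alpha}+O\bigl((\log z)^{2\mu}\bigr)\Bigr)
\end{equation*}
so that $\kappa(z)^{j_0}z^n$ has a Gaussian saddle at $z=1$ on the relevant scale. I split $G$ into a central part, coming from $\pi^c(z)$, plus a strictly hyperbolic part. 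The strictly hyperbolic part contributes only exponentially small boundary layers because $\pi^{ss}$ and $\pi^{su}$ stay bounded. The central part is, by Hypothesis~\ref{H:spec} and $\Delta(1)=0$, $\Delta'(1)\ne0$, a meromorphic function with a simple pole at $z=1$; writing it as (residue at $1$)$/(z-1)$ plus a holomorphic remainder, the remainder yields a standard generalized Gaussian contribution that merges into $\widetilde{\Gc}(n,j-j_0)$ up to error $O(n^{-1/2\mu}e^{-cj}\exp(-c|n\alpha+j_0|^{2\mu/(2\mu-1)}n^{-2\mu/(2\mu-1)}))$, while the pole gives, after a primitive in the saddle variable, exactly the factor $E_{2\mu}^\beta\!\bigl((j_0+n\alpha)/n^{1/(2\mu)}\bigr)\Rc^c(j)$ by antidifferentiating the generalized Gaussian $H_{2\mu}^\beta$.

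The residue at $z=1$ is proportional to an eigenvector of $\M(1)$ for $\kappa(1)=1$, that is the vector $(1,\ldots,1)^T$. The coefficient $\Rc^c(j)$ is explicitly the $j$-th entry of the sequence obtained by inverting $\Bc$ on the stable-quotient side against $\Bc(1,\ldots,1)^T$; it vanishes precisely when $\Bc(1,\ldots,1)^T\in\Bc E^s(1)$, giving \eqref{condRc}. Exponential decay of $\Rc^c$ and $\Rc^u$ in $j$ follows from the fact that $\M(1)_{|E^{ss}(1)}$ has spectral radius strictly less than $1$ and $\M(1)_{|E^{su}(1)}$ has spectral radius strictly greater than $1$. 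The main obstacle in the whole argument is the uniform control of the meromorphic extension near $z=1$ together with the saddle-point estimate in the same neighborhood: unlike in \cite{CF2} where a holomorphic extension sufficed, here one must track the residue carefully and show that the antiderivative structure producing $E_{2\mu}^\beta$ is compatible with the polynomial weight coming from the pole, which is precisely the step that justifies the non-decaying factor appearing in the instability estimate \eqref{rateofexploTqn}.
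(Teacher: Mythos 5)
Your proposal follows essentially the same route as the paper: inverse Laplace (Dunford) representation of $\Gc(n,j_0,j)$, construction of the spatial Green's function via the dynamical system driven by $\M(z)$ with boundary matching through $\Bc_{|E^s(z)}$, meromorphic extension near $z=1$ with a simple pole given by the Lopatinskii determinant, contour deformation combined with a saddle-point estimate driven by the expansion $\kappa(z)=\exp(-\log z/\alpha + O((\log z)^{2\mu}))$, and the identification of the pole contribution with $E_{2\mu}^\beta$ via antidifferentiation of $H_{2\mu}^\beta$ (exactly the role of the Appendix identity \eqref{egF}). The only presentational difference is that the paper subtracts $\widetilde G$ at the spatial level (working with $R = G - \widetilde G$, so nothing needs to ``merge into'' $\widetilde{\Gc}$) and splits the proof into the three explicit zones $j_0 > np$, $j_0 < -n\alpha/2$, $j_0\in[-n\alpha/2,np]$ with the parametrized family of paths $\Gamma_p$ of \eqref{param} to obtain the generalized Gaussian bound \eqref{in:Err}, but these are implementation details of the strategy you describe.
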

	
	The sequences $\Rc^u(j_0,j)$ and $\Rc^c(j)$ correspond to boundary layers which are linked respectively to the vector spaces $E^{su}(1)$ and $E^c(1)$ defined in \eqref{eq:decompStableUnstable_near_1}. The coefficients $\ind_{np\geq j_0}$ and $E_{2\mu}^\beta\left(\frac{j_0+n\alpha}{n^\frac{1}{2\mu}}\right)$ which are in front of the values $\Rc^u(j_0,j)$ and $\Rc^c(j)$ in the definition \eqref{def:Err} of $\mathrm{Err}(n,j_0,j)$ can be described as "activation" coefficients. They are close to $0$ for small times $n$ and get closer to $1$ as $n$ becomes larger.
	
	Let us portray the description of the temporal Green's function $\Gc(n,j_0,j)$ that Theorem \ref{th:Green} conveys. For an initial condition $u^0=\delta_{j_0}$, the solution $\Gc(n,j_0,j)$ of the numerical scheme \eqref{def:numScheme} does not see the boundary condition for sufficiently small times $n$ since the stencil of the numerical scheme is finite. Therefore, it coincides with $(\widetilde{\Gc}(n,j-j_0))_{j\geq1}$ the solution of the numerical scheme \eqref{def:numSchemeZ} with a similar initial condition. Thus, \eqref{eq:GTLaur//Gauss} tells us that the temporal Green's function $\Gc(n,j_0,j)$ is close to a generalized Gaussian wave for small times $n$. The boundary layer $\Rc^c$ and $\Rc^u$ are not activated yet. However, when the time $n$ gets close to $-\frac{j_0}{\alpha}$, the generalized Gaussian wave associated with $\widetilde{\Gc}(n,j-j_0)$ reaches the boundary and the boundary layers $\Rc^u(j_0,j)$ and $\Rc^c(j)$ get activated. As $n$ becomes large compared to $-\frac{j_0}{\alpha}$, most of the generalized Gaussian wave will have passed through the boundary and the boundary layers $\Rc^u(j_0,j)$ and $\Rc^c(j)$ are fully activated.
	
	We can already intuitively deduce Theorem \ref{th:Stab} from Theorem \ref{th:Green} when the boundary layer $\Rc^c$ is different from zero. The defining element that implies the $\ell^1$-stability and the $\ell^q$-instability for $q\in]1,+\infty]$ of the numerical scheme \eqref{def:numScheme} is the independence with respect to $j_0$ of the boundary layer $\Rc^c$. If we consider an initial condition $u^0$, we expect to see a boundary layer
	$$\left(\sum_{j_0\geq1}u^0_{j_0}\right) \Rc^c$$
	appear for large times $n$ because of the equality \eqref{eq:u^n//Gcc} and Theorem \ref{th:Green}. We will clarify the proof in Section \ref{sec:Cor}.
	
	\begin{remark}\label{rem:Cons}
		We make two remarks here:
		
		$\bullet$ It should certainly be possible to prove a generalization of Theorem \ref{th:Stab} and Theorem \ref{th:Green} with a relaxed Hypothesis \ref{H:spec} that allows additional simple zeroes of modulus $1$ for the Lopatinskii determinant $\Delta$ either or not embedded into the essential spectrum. This would be achieved by combining the techniques from this paper and from \cite{CF2}.
		
		$\bullet$ As explained earlier, there are connections between the study of approximations of hyperbolic PDEs with boundary conditions and the study of discrete shock profiles for conservation law approximations. The spectral configuration presented in Hypothesis \ref{H:spec} with a simple eigenvalue at $1$ which lies in the essential spectrum $\Tc$ also occurs in the study of the linear stability of discrete shock profiles for conservation law approximations for Lax shocks (see \cite{Godillon}). Using a similar analysis as in Theorem \ref{th:Green} and, more precisely, calculations similar as those done in Section \ref{sec:GT} may improve the description of the temporal Green's function of stationary discrete shock profiles done in \cite{Godillon}. This could potentially result in an argument for linear (and possibly non-linear) stability for the stationary discrete shock profiles.
	\end{remark}
	
	\subsection{Plan of the paper}
	
	We now present the outline of the paper.
	
	Firstly, in Section \ref{sec:Num}, we numerically verify Theorems \ref{th:Stab} and \ref{th:Green} on the example of the Lax-Friedrichs scheme with a boundary condition so that 
	$$\Bc (1\hdots 1)^T \notin \Bc E^s(1)$$
	which implies that the numerical scheme will be $\ell^1$-stable but $\ell^q$-unstable for all $q\in]1,+\infty]$. We compute the temporal Green's function $\Gc(n,j_0,j)$ and observe the formation of the boundary layer $\Rc^c(j)$. We then assess the accuracy of the estimates \eqref{in:Err} on the error term $\mathrm{Err}(n,j_0,j)$ and of the rate of growth \eqref{rateofexploTqn} for the family $(\Tc^n)_{n\in\N}$ acting on $\Hc_q$. We then consider the example of the O3 scheme with a boundary condition so that 
	$$\Bc (1\hdots 1)^T \in \Bc E^s(1)$$
	which implies that the scheme is $\ell^q$-stable for all $q\in[1,+\infty]$. We will numerically verify this statement. 
	
	In Section \ref{sec:Cor}, we present the proof of Theorem \ref{th:Stab} whilst assuming that Theorem \ref{th:Green} has been proved.  
	
	The main part of this article will be dedicated to the proof of Theorem \ref{th:Green}, which will rely on an approach referred to as spatial dynamics, also used in \cite{ZH,Godillon,CF,CF2,Coeuret}. In Section \ref{sec:GS}, we study the spectrum of the operators $\Tc$ and $\Lcc$ and define the spatial Green's functions for those operators. We then demonstrate precise estimates for the difference between the two spatial Green's functions and extend them meromorphically in a neighborhood of $1$. In Lemma \ref{lem:GS_près}, we also define the boundary layers $\Rc^u$ and $\Rc^c$ that occur in Theorem \ref{th:Green} and prove the assertion \eqref{condRc}.
	
	The proof of Theorem \ref{th:Green} will be presented in Section \ref{sec:GT}. The approach involves expressing the difference of the temporal Green's functions $\Gc(n,j_0,j)-\widetilde{\Gc}(n,j-j_0)$ through the spatial Green's functions. Using the results obtained in Section \ref{sec:GS}, we will then prove bounds on $\mathrm{Err}(n,j_0,j)$. We expect three different behaviors depending on the ratio $j_0/n$: the case where $j_0$ is large compared to $n$ (i.e. $j_0>np$), the case where $j_0$ is small compared to $n$ (i.e. $j_0<-\frac{n\alpha}{2}$) and the case where $j_0$ is close to $-n\alpha$ (i.e. $j_0\in\left[-\frac{n\alpha}{2},np\right]$). The later case will be the bulk of the proof.
	
	\section{Numerical result}\label{sec:Num}
	
	\subsection{Example of unstable boundary condition for the modified Lax-Friedrichs scheme}
	
	\begin{figure}
		\centering
		\begin{subfigure}{0.58\textwidth}
			\includegraphics[width=\textwidth]{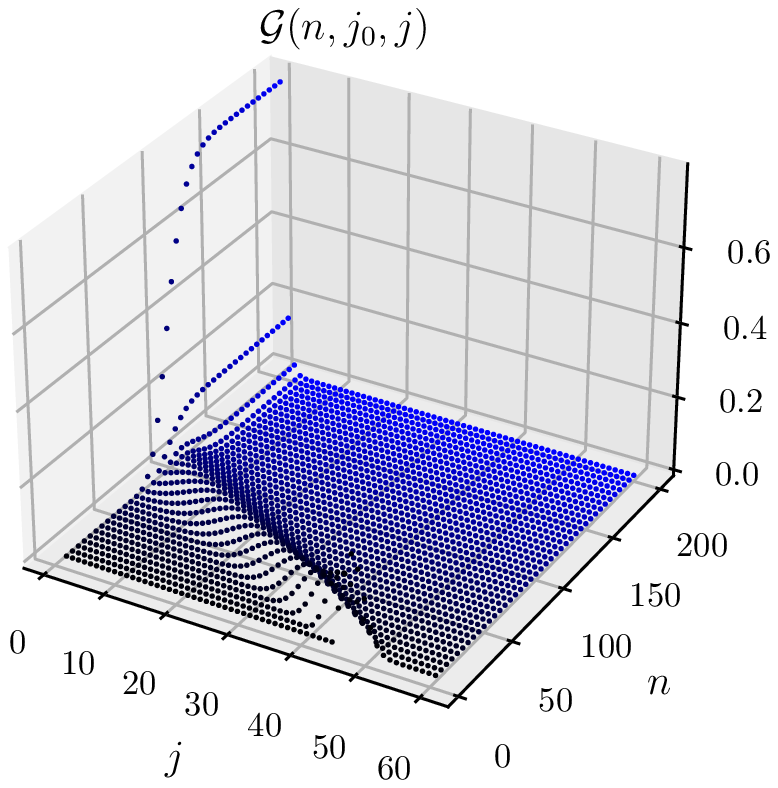}
		\end{subfigure}
		\hfill
		\centering
		\begin{subfigure}{0.37\textwidth}
			\includegraphics[width=\textwidth]{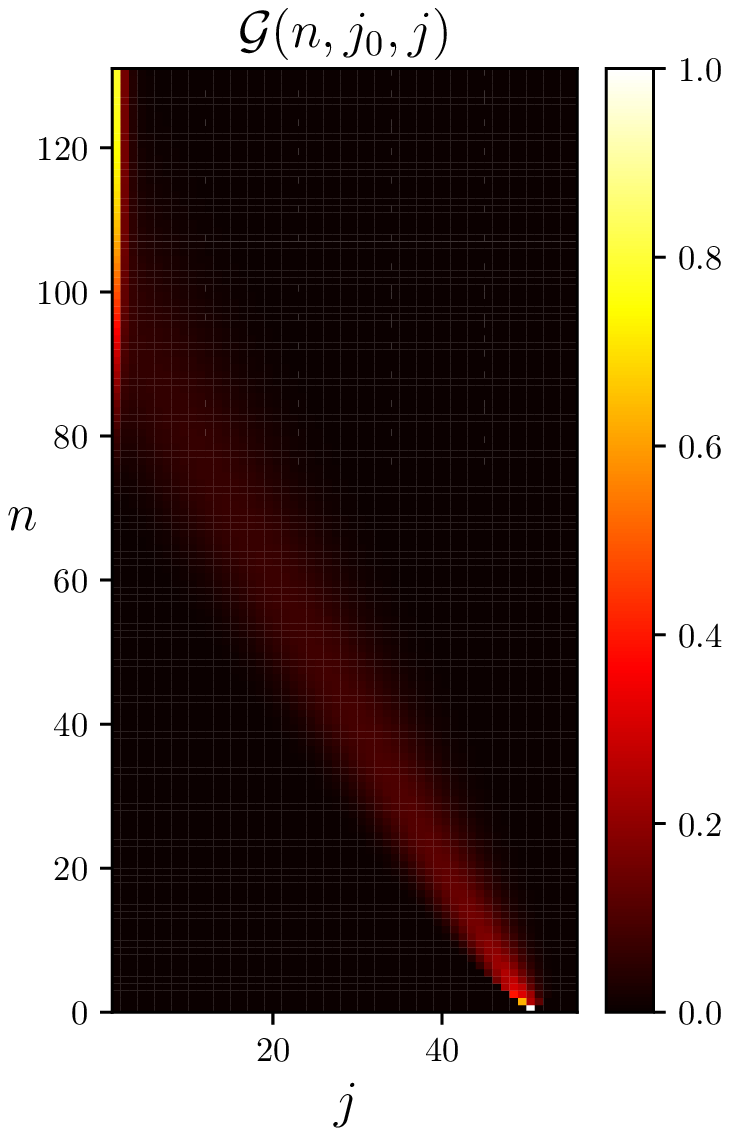}
		\end{subfigure}
		\caption{We consider the modified Lax-Friedrichs scheme \eqref{numLFR} with the parameters $\alpha=-\frac{1}{2}$, $D=\frac{3}{4}$ and $b=5$. Both figures represent the temporal Green's function $\Gc(n,j_0,j)$ for $j_0=50$ which is the solution of the numerical scheme \eqref{numLFR} for the initial condition $u^0=\delta_{j_0}$.}
		\label{fig:GreenLFR}
	\end{figure}
	
	We consider the modified Lax-Friedrichs scheme for the transport equation \eqref{def:PDE}
	\begin{align}\label{numLFR}
		\begin{split}
			\forall n\in \N, \forall j\in \N\backslash\lc0\rc, \quad &u^{n+1}_j = a_{-1}u^n_{j-1} + a_0u^n_j+ a_1u^n_{j+1},\\
			\forall n\in \N, \quad &u^n_0 = b u^n_1,
		\end{split}
	\end{align}
	where $p=r=1$, $\alpha:=\lambda v$, $D>0$, the coefficient $b\in\R$ determines the boundary condition and
	$$a_{-1}=\frac{D+\alpha}{2},\quad a_0=1-D,\quad a_1=\frac{D-\alpha}{2}.$$
	We assume that $D\neq-\alpha$ so that all three coefficients $a_{-1}$, $a_0$ and $a_1$ are nonzero. The symbol $F$ defined by \eqref{def:F} verifies
	$$\forall t\in\R,\quad F(e^{it}) = 1-D+D\cos(t)-i\alpha\sin(t).$$
	If we consider that $\alpha^2<D<1$, then there holds: 
	$$\forall t\in[-\pi,\pi]\backslash\lc0\rc,\quad |F(e^{it})|<1.$$
	Furthermore, we have that
	$$F(e^{it})\underset{t\rightarrow 0}= \exp\left(-i\alpha t -\beta t^2+o(t^2)\right)$$
	with $\beta:= \frac{D-\alpha^2}{2}>0$ and $\mu:=1$ in our notation of Hypothesis \ref{H:scheme}. Therefore, Hypothesis \ref{H:scheme} is verified. 
	
	We will now make a choice for the coefficient $b$ so that Hypothesis \ref{H:spec} is also verified. The matrix $\Bc$ and its kernel for the numerical scheme \eqref{numLFR} are equal to
	$$\Bc = \begin{pmatrix}
		-b & 1
	\end{pmatrix}\quad \text{and} \quad  \ker\Bc = \mathrm{Span} \begin{pmatrix}
		1 \\ b
	\end{pmatrix}.$$
	
	We now need to determine $E^s(z)$ for $z\in \Oc$ and $z=1$. Lemma \ref{lem:SpecSpl} implies that for $z\in \Oc$, the matrix $\M(z)$ has an eigenvalue $\kappa_s(z)\in \D$ and an eigenvalue $\kappa_u(z)\in \Uc$ and that for $z=1$, the matrix $\M(1)$ has an eigenvalue $\kappa_s(1)\in \D$ and $\kappa_u(1):=1$ is a simple eigenvalue of $\M(1)$. We thus have that 
	$$\forall z\in \Oc\cup\lc1\rc,\quad E^s(z)=\mathrm{Span}\begin{pmatrix}
		\kappa_s(z)\\ 1
	\end{pmatrix}.$$
	We also observe that the determinant of the matrix $\M(z)$ is constantly equal to $\frac{a_{-1}}{a_1}$. Thus, for $z\in\Oc\cup\lc1\rc$, $\kappa_s(z)=\kappa_s(1)$ if and only if $1$ is an eigenvalue of $\M(z)$, which is only verified when $z=1$. Furthermore, the Lopatinskii determinant $\Delta$ in a neighborhood of $1$ is equal to $1-b\kappa_s(z)$. To satisfy Hypothesis \ref{H:spec}, we obviously choose $b=\frac{1}{\kappa_s(1)}=\frac{a_1}{a_{-1}}$ so that $1$ is a simple zero of $\Delta$ and 
	$$\forall z\in \Oc, \quad \ker \Bc \cap E^s(z) =\lc0\rc.$$
	Thus, Hypothesis \ref{H:spec} is satisfied. Furthermore, we observe that 
	\begin{equation}\label{subsec:LFR:eg_B}
		\Bc \begin{pmatrix} 1 \\ 1 \end{pmatrix} =1-b \neq 0\quad\text{ and }\quad  \Bc E^s(1) = \lc0\rc.
	\end{equation}
	It ensues from Theorem \ref{th:Stab} that the numerical scheme \eqref{numLFR} is $\ell^1$-stable but $\ell^q$-unstable for all $q\in ]1, +\infty]$.
	
	\begin{figure}
		\centering
		\includegraphics[width=0.55\textwidth]{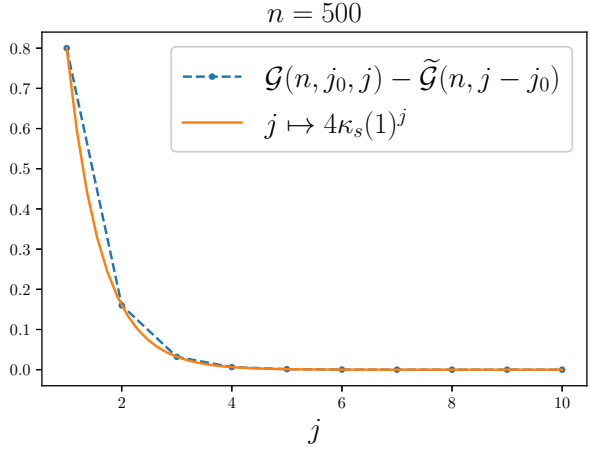}
		\caption{We consider the modified Lax-Friedrichs scheme \eqref{numLFR} with the parameters $\alpha=-\frac{1}{2}$, $D=\frac{3}{4}$ and $b=5$. We represent the difference of the temporal Green's functions $\Gc(n,j_0,j)-\widetilde{\Gc}(n,j-j_0)$ for $j_0=50$ and $n=500$. Since $n$ is large compared to $-\frac{j}{\alpha}$, there only remains the boundary layer $\Rc^c$ which we can identify.}
		\label{fig:RcLFR}
	\end{figure}

	We now consider the numerical scheme \eqref{numLFR} with the parameters $\alpha=-\frac{1}{2}$, $D=\frac{3}{4}$ and $b=5$. Hypotheses \ref{H:scheme} and \ref{H:spec} are thus fulfilled. We want to observe the decomposition of the temporal Green's function $\Gc(n,j_0,j)$ presented in Theorem \ref{th:Green}. In Figure \ref{fig:GreenLFR}, we consider $j_0=50$ and we apply the numerical scheme \eqref{numLFR} for the initial datum $u^0=\delta_{j_0}$ in order to compute the temporal Green's function $\Gc(n,j_0,j)$ defined by \eqref{def:GTToep}. Let us observe that, since the strictly unstable subspace $E^{su}(1)$ is equal to $\lc0\rc$, following the proof of Theorem \ref{th:Green}, we can prove that $\Rc^u=0$ (see proof of Lemma \ref{lem:GS_près}). Furthermore, because of \eqref{subsec:LFR:eg_B}, Theorem \ref{th:Green} states that $\Rc^c\neq0$. On Figure \ref{fig:GreenLFR}, we observe that:
	
	$\bullet$ For $n$ small compared to $-\frac{j_0}{\alpha}$, we observe that the temporal Green's function $\Gc(n,j_0,j)$ resembles a Gaussian wave which travels at a speed $\alpha$, as expected for the temporal Green's function $\widetilde{\Gc}(n,j-j_0)$ computed using \eqref{def:GTLaur}. 
	
	$\bullet$ When $n$ is close to $-\frac{j_0}{\alpha}$, the Gaussian wave reaches the boundary and activates the boundary layer $\Rc^c$.
	
	$\bullet$When $n$ is large compared to $-\frac{j_0}{\alpha}$, the Gaussian wave has passed the boundary. Furthermore, $E_{2\mu}^\beta\left(\frac{j_0+n\alpha}{n^\frac{1}{2\mu}}\right)$ is close to $1$ so the boundary layer $\Rc^c$ is fully activated.
	
	On Figure \ref{fig:RcLFR}, we consider a large time iteration $n=500$. Theorem \ref{th:Green} states that the difference of the temporal Green's functions $\Gc(n,j_0,j)-\widetilde{\Gc}(n,j-j_0)$ should be close to the boundary layer $\Rc^c$. This allows us to identify the expression of the boundary layer $\Rc^c$ in our case.
	
	Now that we have determined the boundary layer $\Rc^c$ which does not depend on the value of $j_0$, we can compute the error term $\mathrm{Err}(n,j_0,j)$ defined by \eqref{def:Err}. Let us observe that for the numerical scheme \eqref{numLFR}, since Hypotheses \ref{H:scheme} and \ref{H:spec} are verified and $\mu=1$, Theorem \ref{th:Green} states that there exist two positive constants $C,c>0$ such that the term $\mathrm{Err}(n,j_0,j)$ satisfies the following estimate
	\begin{equation}\label{subsec:LFR:in_Err}
		\forall n,j_0,j\in\N\backslash\lc0\rc,\quad \sqrt{n} |\mathrm{Err}(n,j_0,j)|\leq Ce^{-cj} \exp\left(-c\frac{|n\alpha+j_0|^2}{n}\right).
	\end{equation}
	In Figure \ref{fig:ErrorLFR}, we fix $j=1$ and plot $\sqrt{n}\:\mathrm{Err}(n,j_0,1)$ against $n$ and $j_0$. The results support the sharpness of the estimates of Theorem \ref{th:Green}, showing Gaussian behavior as predicted by \eqref{subsec:LFR:in_Err}.

	\begin{figure}
		\centering
		\begin{subfigure}{0.58\textwidth}
			\includegraphics[width=\textwidth]{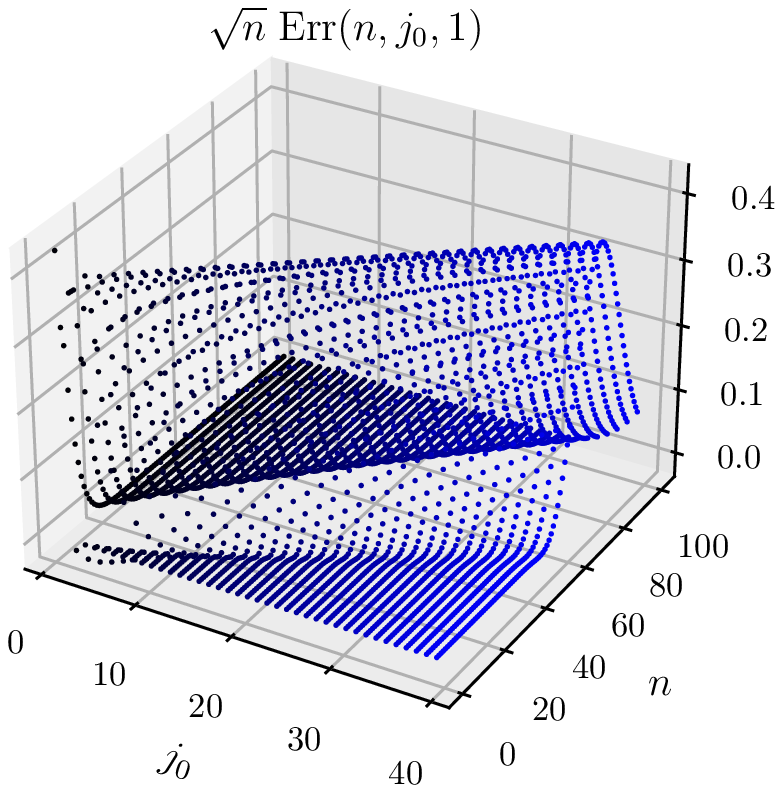}
		\end{subfigure}
		\hfill
		\centering
		\begin{subfigure}{0.37\textwidth}
			\includegraphics[width=\textwidth]{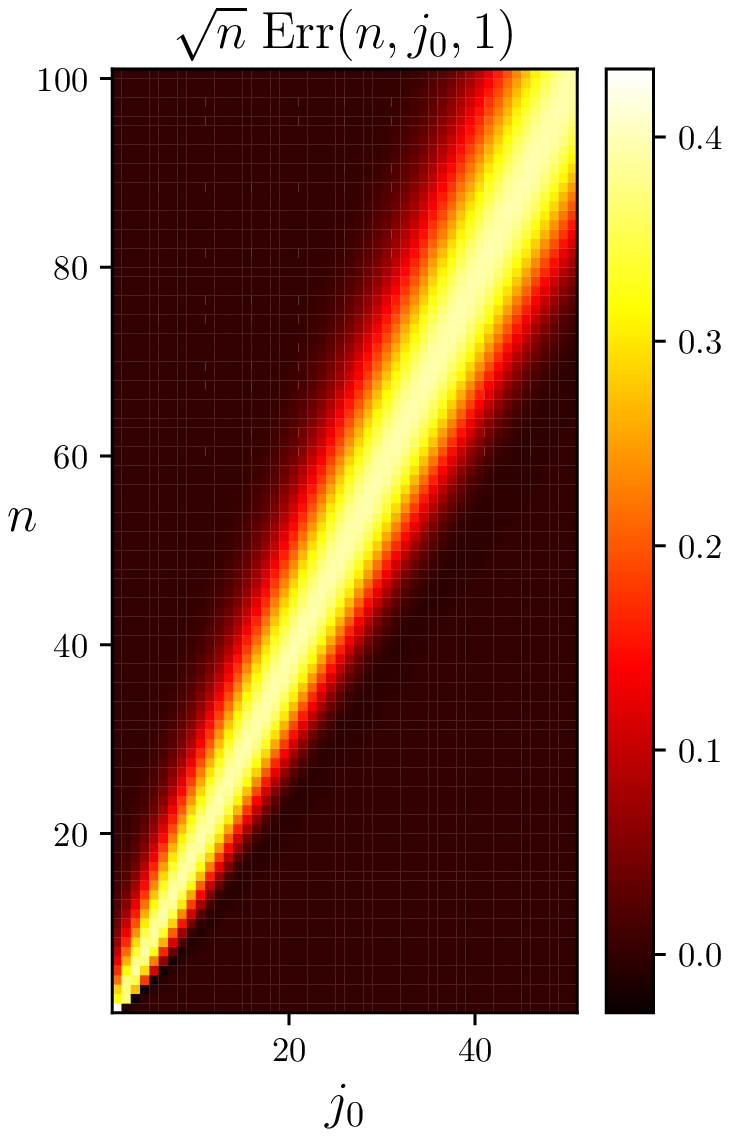}
		\end{subfigure}
		\caption{For the modified Lax-Friedrichs scheme \eqref{numLFR}, we plot the error term $\sqrt{n}\mathrm{Err}(n,j_0,j)$ defined by \eqref{def:Err}. As predicted by Theorem \ref{th:Green}, we observe that $\sqrt{n}\mathrm{Err}(n,j_0,j)$ satisfies Gaussian estimates of the form \eqref{subsec:LFR:in_Err}.}
		\label{fig:ErrorLFR}
	\end{figure}

	Finally, because of \eqref{subsec:LFR:eg_B}, Theorem \ref{th:Stab} states that for $q\in]1,+\infty]$, the numerical scheme \eqref{numLFR} is $\ell^q$-unstable and that there exists a constant $C>0$ such that the family of operators $(\Tc^n)_{n\geq 0}$ in $\Lc(\Hc_q)$ satisfies the inequality \eqref{rateofexploTqn} that we recall here
	\begin{equation}\label{subsecLFR:rateofexploTqn}
		\forall n\in\N, \quad \left\|\Tc^n\right\|_{\Lc(\Hc_q)}\geq C n^{1-\frac{1}{q}}.
	\end{equation}
	In order to numerically verify the sharpness of the inequality \eqref{subsecLFR:rateofexploTqn}, we will consider for $J\in \N\backslash\lc0\rc$ the initial condition $u_J\in \Hc_q$ defined by
	\begin{equation}\label{subsec:LFR:def_uJ}
		u_J=\sum_{j_0=1}^J\delta_{j_0}
	\end{equation}
	and compute the solution $(\Tc^n u_J)_{n\in\N}$ of the numerical scheme \eqref{numLFR}. 
	
	\begin{itemize}
		\item On the left-side of Figure \ref{fig:stabLFR}, we choose $q=+\infty$ and we represent for several choices of $J$ the ratio of $\left\|\Tc^nu_J\right\|_{\Hc_\infty}$ and $\left\|u_J\right\|_{\Hc_\infty}$ which is lower than $\left\|\Tc^n\right\|_{\Lc(\Hc_\infty)}$. We observe a linear increase of $\left\| \Tc^n\right\|_{\Lc(\Hc_{\infty})}$ depending on $n$ that supports the inequality \eqref{subsecLFR:rateofexploTqn}.
		
		\item On the right-side of Figure \ref{fig:stabLFR}, we choose $q=2$ and we represent in the logarithmic scale, for several choices of $J$, the ratio of $\left\|\Tc^nu_J\right\|_{\Hc_2}$ and $\left\|u_J\right\|_{\Hc_2}$ which is lower than $\left\|\Tc^n\right\|_{\Lc(\Hc_2)}$. We observe a growth of $\left\| \Tc^n\right\|_{\Lc(\Hc_{2})}$ at a rate of $\sqrt{n}$ that also supports the inequality \eqref{subsecLFR:rateofexploTqn}.
	\end{itemize}
	
	\begin{figure}
		\centering
		\begin{subfigure}{0.495\textwidth}
			\includegraphics[width=\textwidth]{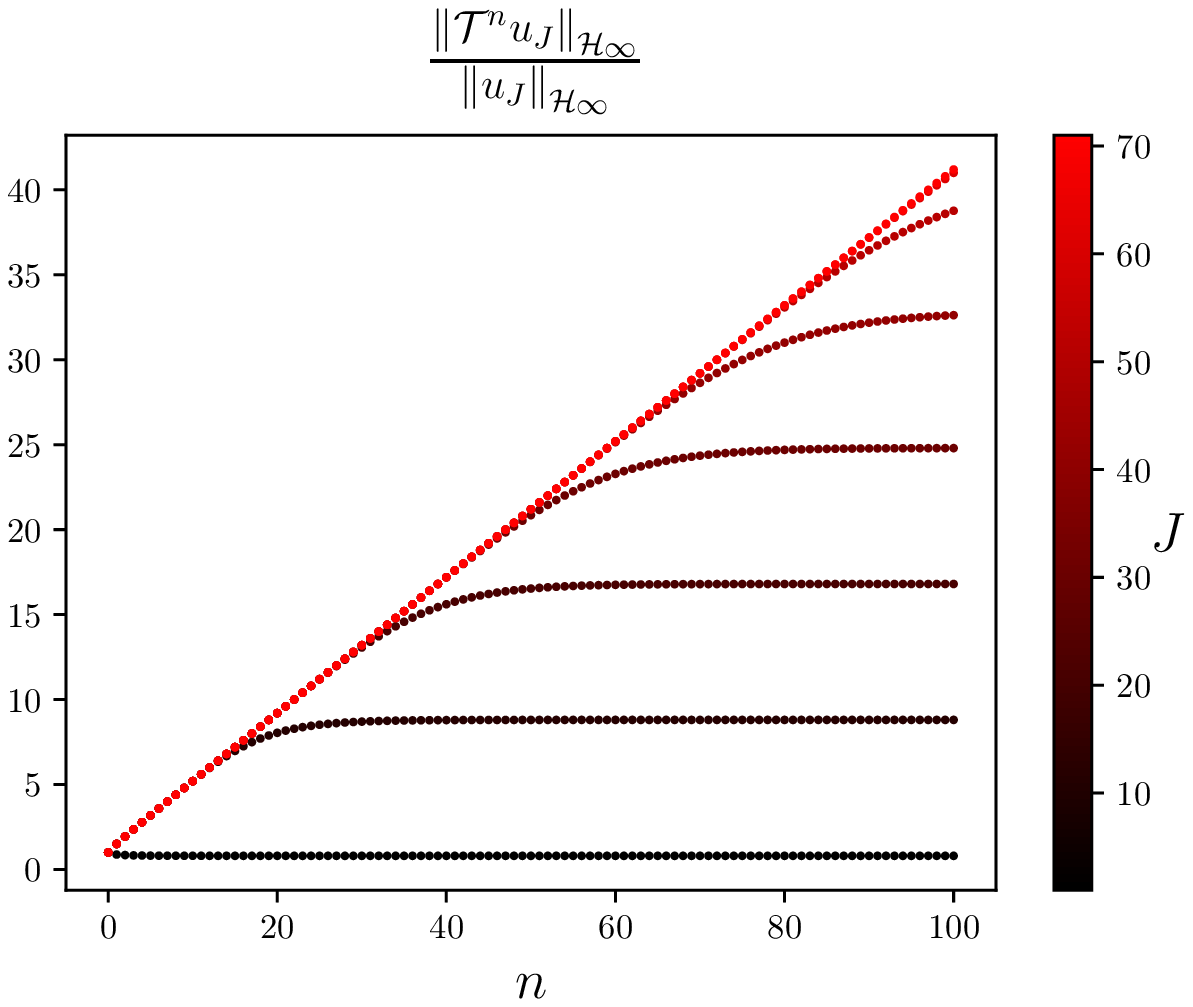}
		\end{subfigure}
		\hfill
		\centering
		\begin{subfigure}{0.495\textwidth}
			\includegraphics[width=\textwidth]{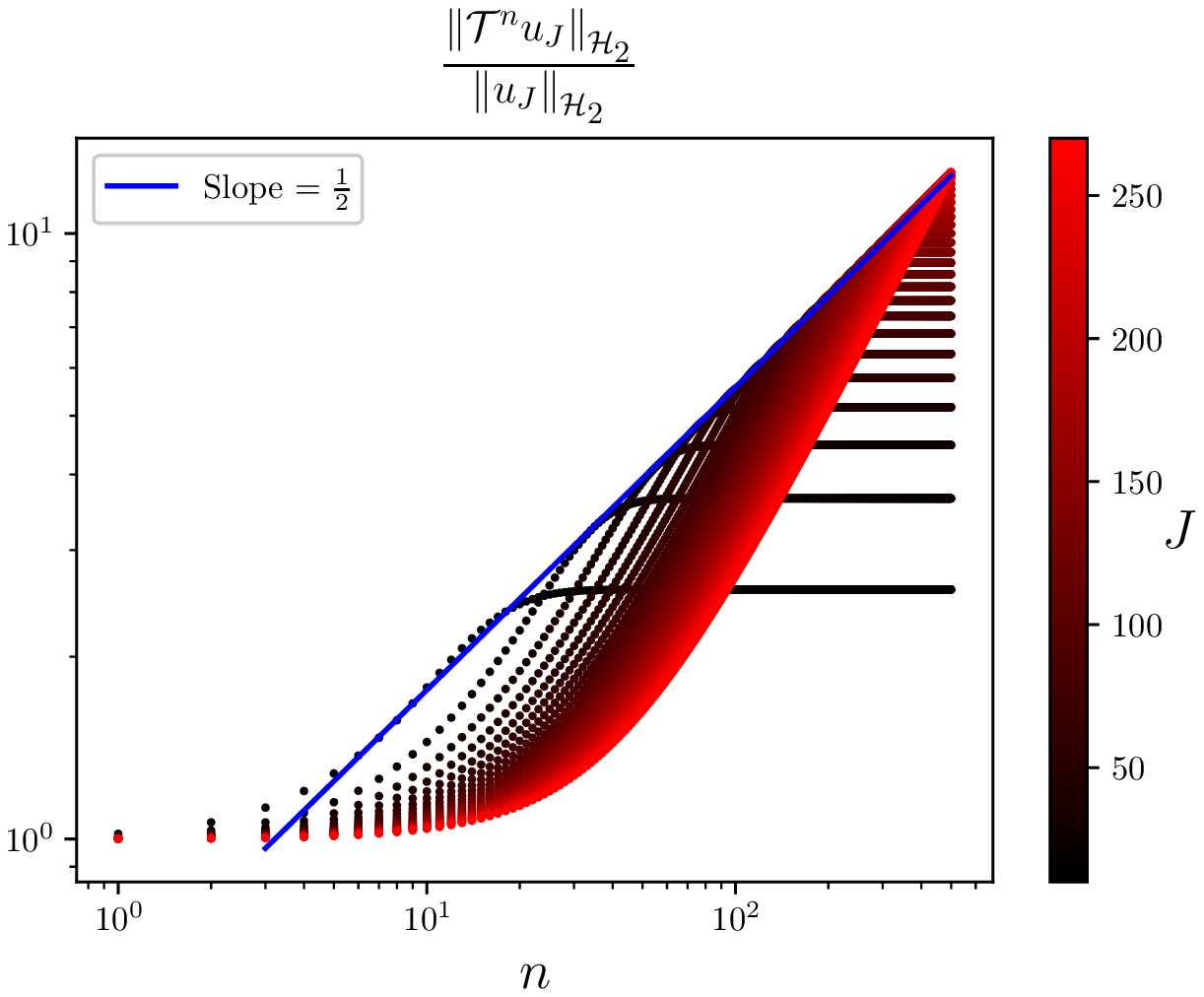}
		\end{subfigure}
		\caption{We consider the modified Lax-Friedrichs scheme \eqref{numLFR}. \newline\underline{On the left side:} For several choices of integers $J$, we compute the ratio between $\left\|\Tc^nu_J\right\|_{\Hc_\infty}$ and $\left\|u_J\right\|_{\Hc_\infty}$ depending on $n$, where the sequence $u_J$ is defined by \eqref{subsec:LFR:def_uJ}. For $n$ fixed, this gives a lower bound for $\left\|\Tc^n\right\|_{\Lc(\Hc_\infty)}$. The figure supports the fact that $\left\|\Tc^n\right\|_{\Lc(\Hc_\infty)}$ grows at a linear rate with regards to $n$. \newline\underline{On the right side:} For several choices of integers $J$, we compute the ratio between $\left\|\Tc^nu_J\right\|_{\Hc_2}$ and $\left\|u_J\right\|_{\Hc_2}$ depending on $n$, where the sequence $u_J$ is defined by \eqref{subsec:LFR:def_uJ}. For $n$ fixed, this gives a lower bound for $\left\|\Tc^n\right\|_{\Lc(\Hc_2)}$. The representation in the logarithmic scale supports the fact that $\left\|\Tc^n\right\|_{\Lc(\Hc_2)}$ grows at a rate $\sqrt{n}$.}
		\label{fig:stabLFR}
	\end{figure}
	
	\subsection{Example of stable boundary condition for the O3 scheme}

	We consider the O3 scheme for the transport equation \eqref{def:PDE}
	\begin{align}\label{numO3}
		\begin{split}
			\forall n\in \N, \forall j\in \N\backslash\lc0\rc, \quad &u^{n+1}_j = a_{-1}u^n_{j-1} + a_0u^n_j+ a_1u^n_{j+1}+a_2 u^n_{j+2},\\
			\forall n\in \N, \quad &u^n_0 = b_1 u^n_1+ b_2u^n_2,
		\end{split}
	\end{align}
	where $r=1$ and $p=2$, $\alpha:=\lambda v$, the coefficients $b_1,b_2\in\R$ determine the boundary condition and
	\begin{align*}
		a_{-1}:= \frac{\alpha(1+\alpha)(2+\alpha)}{6}, & \quad a_0 :=  \frac{(1-\alpha^2)(2+\alpha)}{2},\\
		a_{1}:=  -\frac{\alpha(1-\alpha)(2+\alpha)}{2}, & \quad a_2 :=  \frac{\alpha(1-\alpha^2)}{6}.
	\end{align*}
	We refer to \cite{Despres} for a detailed analysis of this scheme for the rightgoing transport equation on the whole line $\R$. For $\alpha \in]-1,0[$, the symbol $F$ defined by \eqref{def:F} satisfies $F(1) =1$ and 
	$$\forall \kappa \in\S^1\backslash\lc1\rc,\quad |F(\kappa)|<1.$$  
	Furthermore, there exists a constant $\beta>0$ such that
	$$F(e^{it})\underset{t\rightarrow0}= \exp\left(-i\alpha t - \beta t^4+o(t^4)\right).$$
	We have that $\mu=2$ in our notation of Hypothesis \ref{H:scheme}. Thus, Hypothesis \ref{H:scheme} is satisfied.
	
	We now make a choice for the coefficients $b_1$ and $b_2$ that define the numerical boundary condition for the numerical scheme \eqref{numO3}. Lemma \ref{lem:SpecSpl} implies that for $z\in\Oc$, the matrix $\M(z)$ has an eigenvalue $\kappa_s(z)\in\D$ and two (not necessarily distinct) eigenvalues $\kappa_u^1(z),\kappa_u^2(z)\in\Uc$ and that for $z=1$ the matrix $\M(1)$ has an eigenvalue $\kappa_s(1)\in\D$, one eigenvalue $\kappa_u^1(1)\in\Uc$ and $\kappa_u^2(1)=1$ is a simple eigenvalue of $\M(1)$. We thus have that
	$$\forall z\in\Oc\cup\lc1\rc,\quad E^s(z)=\mathrm{Span}\begin{pmatrix}
		\kappa_s(z)^2\\ \kappa_s(z)\\ 1
	\end{pmatrix}.$$
	We observe that the trace and the determinant of $\M(z)$ are respectively equal to $-\frac{a_1}{a_2}$ and $-\frac{a_{-1}}{a_2}$. This implies that for $z\in\Oc\cup\lc1\rc$, $\kappa_s(z)=\kappa_s(1)$ if and only if $1$ is an eigenvalue of $\M(z)$, which is only verified when $z=1$. We choose
	$$b_1:=\frac{1+\kappa_s(1)}{\kappa_s(1)},\quad b_2:=-\frac{1}{\kappa_s(1)}.$$
	so that the Lopatinskii determinant $\Delta$ in a neighborhood of $1$ is defined by:
	$$\Delta(z):=1-b_1\kappa_s(z)-b_2\kappa_s(z)^2 = -b_2(\kappa_s(z)-\kappa_s(1))(\kappa_s(z)-1).$$
	Thus, $1$ is a simple zero of the Lopatinskii determinant and
	$$\forall z\in \Oc, \quad \ker \Bc \cap E^s(z) =\lc0\rc.$$
	Thus, Hypothesis \ref{H:spec} is satisfied. Furthermore, we observe that 
	\begin{equation*}
		\Bc \begin{pmatrix} 1 \\ 1\\1 \end{pmatrix} =1-b_1-b_2 =0 \in\Bc E^s(1).
	\end{equation*}
	It ensues from Theorem \ref{th:Stab} that the numerical scheme \eqref{numLFR} is $\ell^q$-stable for all $q\in [1, +\infty]$. On Figure \ref{fig:stabO3}, we consider for several choices of $J\in \N\backslash\lc0\rc$ the initial condition $u_J\in \Hc_q$ defined by \eqref{subsec:LFR:def_uJ} and compute the solution $(\Tc^n u_J)_{n\in\N}$ of the numerical scheme \eqref{numO3}. We represent the ratio of $\left\|\Tc^nu_J\right\|_{\Hc_q}$ and $\left\|u_J\right\|_{\Hc_q}$ which is lower than $\left\|\Tc^n\right\|_{\Lc(\Hc_q)}$. On the left-side of Figure \ref{fig:stabO3} and on the right-side of Figure \ref{fig:stabO3}, we respectively choose $q=+\infty$ and $q=2$.  The figures support the fact that $\left\|\Tc^n\right\|_{\Lc(\Hc_q)}$ can be uniformly bounded in $n$.
	
	\begin{figure}
		\centering
		\begin{subfigure}{0.495\textwidth}
			\includegraphics[width=\textwidth]{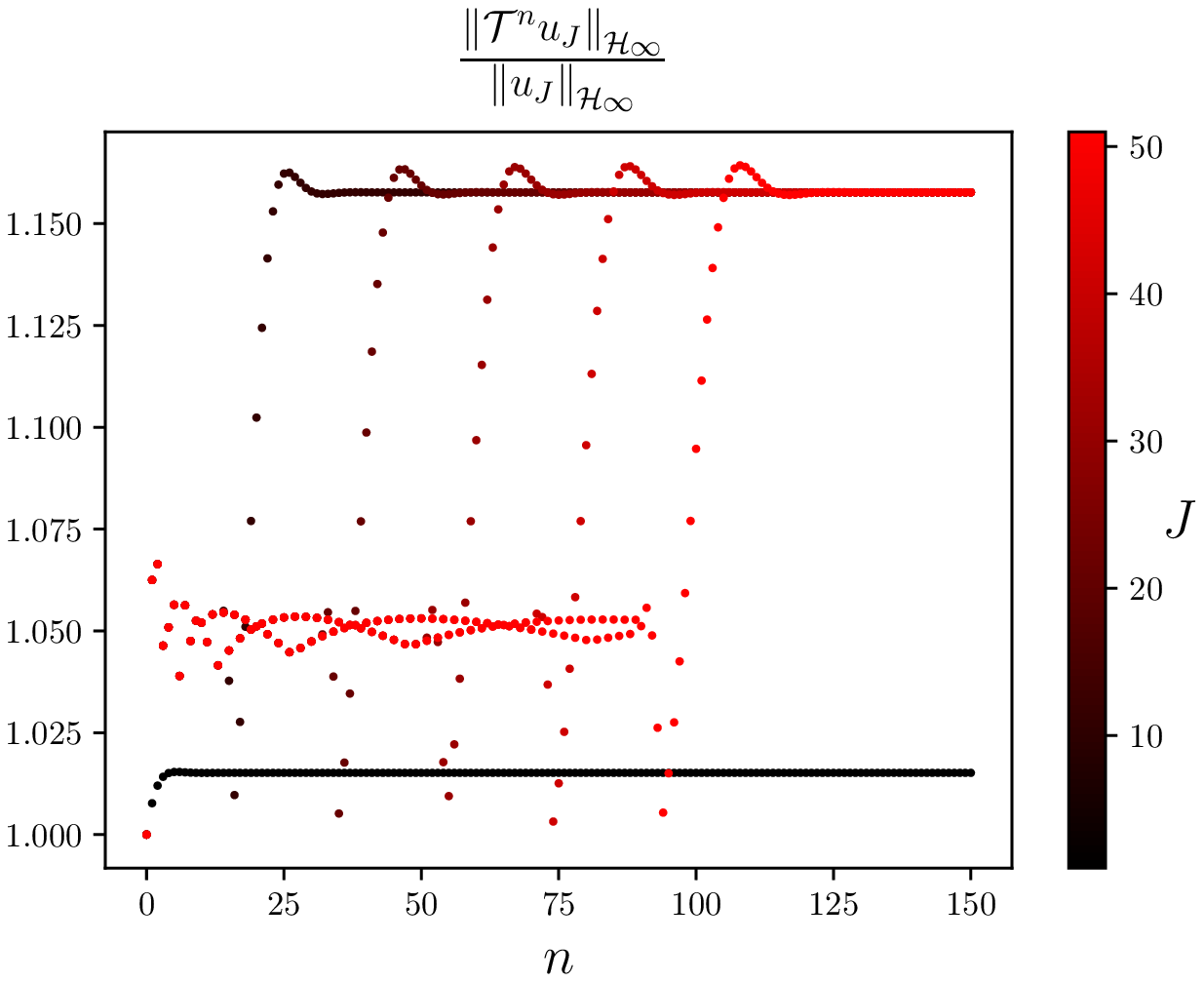}
		\end{subfigure}
		\hfill
		\centering
		\begin{subfigure}{0.495\textwidth}
			\includegraphics[width=\textwidth]{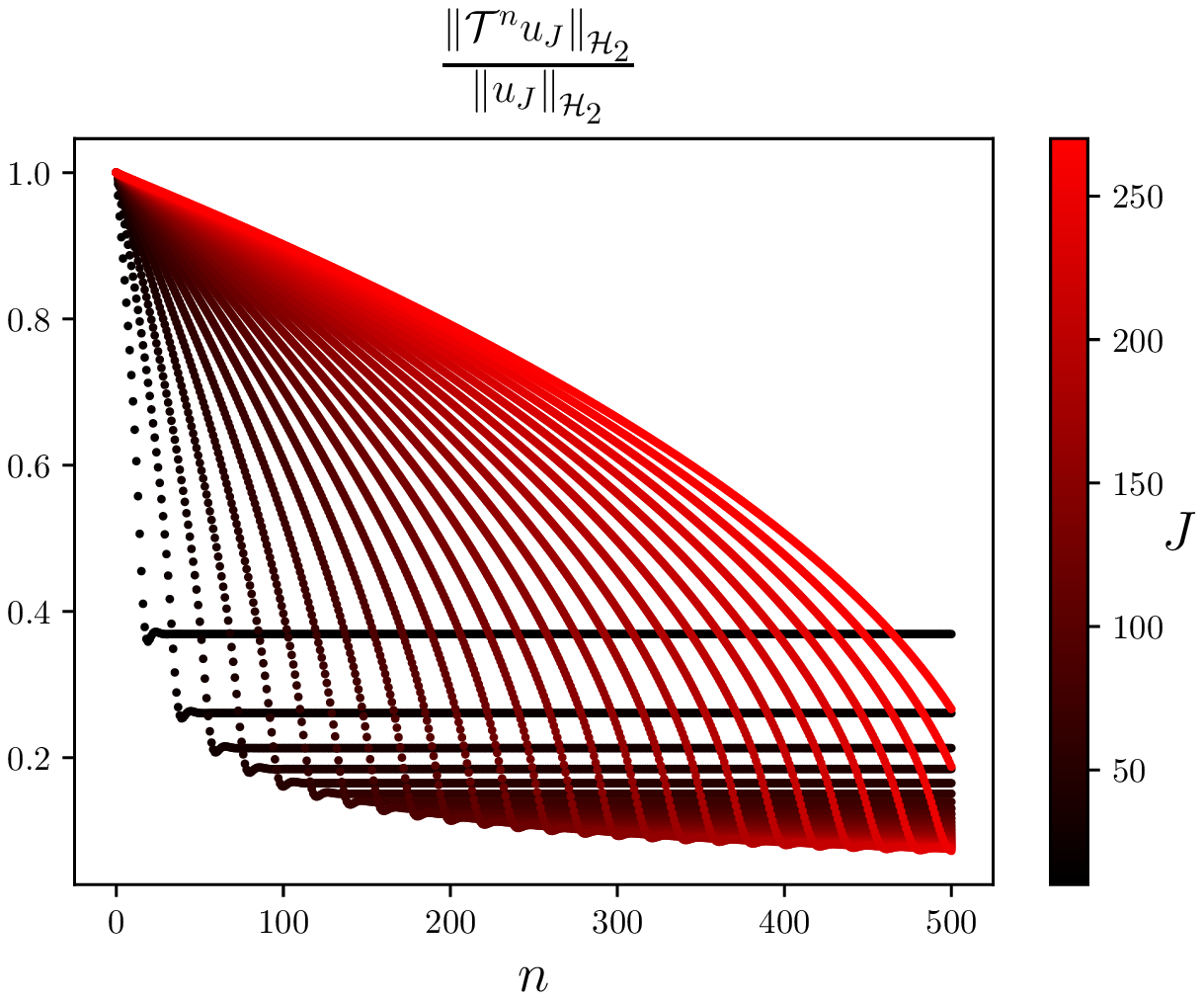}
		\end{subfigure}
		\caption{For the O3 scheme \eqref{numO3}, for several choices of integers $J$, we compute the ratio between $\left\|\Tc^nu_J\right\|_{\Hc_q}$ and $\left\|u_J\right\|_{\Hc_q}$ depending on $n$, where the sequence $u_J$ is defined by \eqref{subsec:LFR:def_uJ}. For $n$ fixed, this gives a lower bound for $\left\|\Tc^n\right\|_{\Lc(\Hc_q)}$. On the left and right sides, we choose respectively $q=+\infty$ and $q=2$.}
		\label{fig:stabO3}
	\end{figure}

	\section{Proof of Theorem \ref{th:Stab} using Theorem \ref{th:Green}}\label{sec:Cor}
	
	In this section, we will prove Theorem \ref{th:Stab} whilst assuming that Theorem \ref{th:Green} has been proved. Theorem \ref{th:Green} allows us to decompose the temporal Green's function $\Gc(n,j_0,j)$ as follows
	\begin{equation}\label{eq:decompoGcc_sec:Cor}
		\forall n,j_0,j\in\N\backslash\lc0\rc,\quad \Gc(n,j_0,j) = \mathrm{Err}(n,j_0,j) + \widetilde{\Gc}(n,j-j_0) + \ind_{np\geq j_0}\Rc^u(j_0,j) + E_{2\mu}^\beta\left(\frac{j_0+n\alpha}{n^\frac{1}{2\mu}}\right)\Rc^c(j).
	\end{equation}
	Furthermore, for $q\in[1,+\infty]$, $u^0\in\Hc_q$ and $n\in \N$, we have that
	\begin{equation}\label{eq:u^n//Gcc_sec:Cor}
		\forall j\in\N\backslash\lc0\rc,\quad (\Tc^nu^0)_j = \sum_{j_0\geq1} u^0_{j_0} \Gc(n,j_0,j).
	\end{equation}
	We decompose the operator $\Tc^n$ in two parts by introducing for all $q\in[1,+\infty]$ and $n\in\N\backslash\lc0\rc$ the two operators $K_{q,n},L_{q,n}\in \Lc(\Hc_q)$ defined as 
	\begin{align*}
		\forall u^0\in \Hc_q, \forall j\in\N\backslash\lc0\rc,& & (K_{q,n}u^0)_j& := \sum_{j_0\geq 1} u^0_{j_0}\left(\mathrm{Err}(n,j_0,j)+\widetilde{\Gc}(n,j-j_0)+ \ind_{np\geq j_0}\Rc^u(j_0,j)\right), \\ 
		\forall u^0\in \Hc_q, \forall j\in\N\backslash\lc0\rc,& & (L_{q,n}u^0)_j& := \sum_{j_0\geq 1} u^0_{j_0}E_{2\mu}^\beta\left(\frac{j_0+n\alpha}{n^\frac{1}{2\mu}}\right)\Rc^c(j),
	\end{align*}
	Using \eqref{eq:decompoGcc_sec:Cor} and \eqref{eq:u^n//Gcc_sec:Cor}, we have
	\begin{equation}\label{eq:decompoTcc_sec:Cor}
		\forall q\in[1,+\infty],\forall n\in\N\backslash\lc0\rc, \quad \Tc^n = K_{q,n}+L_{q,n}.
	\end{equation}	
	
	First, we will prove that for all $q\in[1,+\infty]$ the family of operators $(K_{q,n})_{n\in\N\backslash\lc0\rc}$ is bounded in $\Lc(\Hc_q)$. Using \eqref{eq:decompoTcc_sec:Cor}, this will obviously imply the $\ell^q$-stability of the numerical scheme \eqref{def:numScheme} for all $q\in[1,+\infty]$ when the boundary layer $\Rc^c$ is equal to $0$.
	
	Then, we will prove that the family of $(L_{1,n})_{n\in\N\backslash\lc0\rc}$ is also bounded in $\Lc(\Hc_1)$. Using \eqref{eq:decompoTcc_sec:Cor}, we will have then proved the $\ell^1$-stability of the numerical scheme \eqref{def:numScheme} even when $\Rc^c$ is not equal to $0$
	
	Finally, when the boundary layer $\Rc^c$ is non zero, we will prove for all $q\in ]1, +\infty]$ that there exists a positive constant $C$ such that 
	\begin{equation}\label{in:rateofexploK_sec:Cor}
		\forall n\in\N\backslash\lc0\rc, \quad \left\|L_{q,n}\right\|_{\Lc(\Hc_q)}\geq Cn^{1-\frac{1}{q}}.
	\end{equation}
	Using \eqref{eq:decompoTcc_sec:Cor}, we will thus have proved the existence of a positive constant $C$ such that \eqref{rateofexploTqn} is verified and the $\ell^q$-instability of the numerical scheme \eqref{def:numScheme}.
	
	\vspace{0.1cm}
	\textbf{\underline{Step 1:}} Boundedness of the family $(K_{q,n})_{n\in\N\backslash\lc0\rc}$ in $\Lc(\Hc_q)$ and $\ell^q$-stability when $\Rc^c=0$
	
	\vspace{0.1cm}
	$\bullet$ We consider $q\in[1,+\infty]$ and $u^0\in\Hc_q$. We also introduce $\tilde{q}\in[1,+\infty]$ the Hölder conjugate of $q$. Using the estimates \eqref{in:Err} on $\mathrm{Err}(n,j_0,j)$ and Hölder's inequality, there exist two positive constants $C,c$ such that for all $n,j\in\N\backslash\lc0\rc$
	$$\sum_{j_0\geq1} |\mathrm{Err}(n,j_0,j)||u^0_{j_0}|\leq Ce^{-cj}\left\|u^0\right\|_{\Hc_q}\left\|\left( \frac{1}{n^\frac{1}{2\mu}}\exp\left(-c\left(\frac{|n\alpha+j_0|}{n^\frac{1}{2\mu}}\right)^\frac{2\mu}{2\mu-1}\right)\right)_{j_0\geq 1}\right\|_{\ell^{\tilde{q}}}.$$
	Furthermore, there exists a constant $C>0$ such that
	$$\forall n\in \N\backslash\lc0\rc, \quad \left\|\left( \frac{1}{n^\frac{1}{2\mu}}\exp\left(-c\left(\frac{|n\alpha+j_0|}{n^\frac{1}{2\mu}}\right)^\frac{2\mu}{2\mu-1}\right)\right)_{j_0\geq 1}\right\|_{\ell^{\tilde{q}}}\leq \frac{C}{n^\frac{1}{2\mu q}}.$$
	Therefore, there exists a constant $C>0$ such that for all $u^0\in\Hc_q$ and $n\in \N\backslash\lc0\rc$, the sequence 
	$$\left(\sum_{j_0\geq1}\mathrm{Err}(n,j_0,j)u^0_{j_0}\right)_{j\in\N\backslash\lc0\rc}$$
	belongs to $\Hc_q$ and 
	\begin{equation}\label{in:Err_sec:Cor}
		\forall n\in\N\backslash\lc0\rc,\quad \left\|\left(\sum_{j_0\geq1}\mathrm{Err}(n,j_0,j)u^0_{j_0}\right)_{j\in\N\backslash\lc0\rc}\right\|_{\Hc_q}\leq \frac{C}{n^\frac{1}{2\mu q}}\left\|u^0\right\|_{\Hc_q}.
	\end{equation}
	
	$\bullet$ Using the main result of \cite{Thomee}, we prove that Hypothesis \ref{H:scheme} is one of two conditions so that the family $(\widetilde{\Gc}(n,\cdot))_{n\in\N}$ is bounded in $\ell^1(\Z)$. For $q\in[1,+\infty]$, using Young's convolution inequality $\ell^1(\Z)\ast \ell^q(\Z)\rightarrow \ell^q(\Z)$, we can prove the existence of a positive constant $C>0$ such that for all $u^0\in\Hc_q$ and $n\in \N$, the sequence $\left(\sum_{j_0\geq1}\widetilde{\Gc}(n,j-j_0)u^0_{j_0}\right)_{j\in\N\backslash\lc0\rc}$ belongs to $\Hc_q$ and 
	\begin{equation}\label{in:tildeGcc_sec:Cor}
		\forall n\in\N\backslash\lc0\rc,\quad\left\|\left(\sum_{j_0\geq1}\widetilde{\Gc}(n,j-j_0)u^0_{j_0}\right)_{j\in\N\backslash\lc0\rc}\right\|_{\Hc_q}\leq C\left\|u^0\right\|_{\Hc_q}.
	\end{equation}
	
	$\bullet$ We consider $q\in[1,+\infty]$ and $u^0\in\Hc_q$. We also introduce $\tilde{q}\in[1,+\infty]$ the Hölder conjugate of $q$. Using the bounds \eqref{in:Rc} on $\Rc^u$ and Hölder's inequality, we prove that there exist two positive constants $C,c$ independent from $q$, $u^0$, $n$, $j_0$ and $j$ such that 
	$$\left|\sum_{j_0\geq1}  \ind_{np\geq j_0}\Rc^u(j_0,j)u^0_{j_0}\right|\leq  Ce^{-cj} \left\|u^0\right\|_{\Hc_q} \left\|\left(e^{-cj_0}\right)_{j_0\geq 1}\right\|_{\ell^{\tilde{q}}}.$$
	Therefore, there exists a constant $C>0$ such that for all $u^0\in\Hc_q$ and $n\in \N\backslash\lc0\rc$, the sequence $$\left(\sum_{j_0\geq1} \ind_{np\geq j_0}\Rc^u(j_0,j)u^0_{j_0}\right)_{j\in\N\backslash\lc0\rc}$$ belongs to $\Hc_q$ and 
	\begin{equation}\label{in:Rcu_sec:Cor}
		\forall n\in\N\backslash\lc0\rc,\quad\left\|\left(\sum_{j_0\geq1} \ind_{np\geq j_0}\Rc^u(j_0,j)u^0_{j_0}\right)_{j\in\N\backslash\lc0\rc}\right\|_{\Hc_q}\leq C\left\|u^0\right\|_{\Hc_q}.
	\end{equation}
	
	For $q\in[1,+\infty]$, combining \eqref{in:Err_sec:Cor}-\eqref{in:Rcu_sec:Cor}, we have proved that the family of operators $(K_{q,n})_{n\in\N\backslash\lc0\rc}$ is bounded in $\Lc(\Hc_q)$. When $\Bc\begin{pmatrix} 1 & \hdots &1 \end{pmatrix} ^T\in\Bc E^s(1)$, Theorem \ref{th:Green} implies that the boundary layer $\Rc^c$ is equal to $0$ and thus that the operator $L_{q,n}$ is equal to $0$. We conclude using \eqref{eq:decompoTcc_sec:Cor} that the numerical scheme \eqref{def:numScheme} is $\ell^q$-stable.
	
	\vspace{0.1cm}
	\textbf{\underline{Step 2:}} Boundedness of the family $(L_{1,n})_{n\in\N\backslash\lc0\rc}$ in $\Lc(\Hc_1)$ and proof of the $\ell^1$-stability of the numerical scheme \eqref{def:numScheme}
	
	\vspace{0.1cm}
	We consider that $q=1$. Since the function $E_{2\mu}^\beta$ is bounded, the family $\left(E_{2\mu}^\beta\left(\frac{j_0+n\alpha}{n^\frac{1}{2\mu}}\right)\right)_{n,j_0\in\N\backslash\lc0\rc}$ is bounded. Using the estimates \eqref{in:Rc} on $\Rc^c$, there exist two positive constants $C,c$ such that for all $u^0\in\Hc_1$, $n,j_0,j\in\N\backslash\lc0\rc$, 
	$$\sum_{j_0\geq1} \left|E_{2\mu}^\beta\left(\frac{j_0+n\alpha}{n^\frac{1}{2\mu}}\right)\Rc^c(j)u^0_{j_0}\right|\leq Ce^{-cj} \sum_{j_0\geq1} |u^0_{j_0}|\leq Ce^{-cj} \left\|u^0\right\|_{\Hc_1}.$$
	Therefore, there exists a constant $C>0$ such that for all $u^0\in\Hc_1$ and $n\in \N$, the sequence $$L_{1,n}u^0=\left(\sum_{j_0\geq1}E_{2\mu}^\beta\left(\frac{j_0+n\alpha}{n^\frac{1}{2\mu}}\right)\Rc^c(j)u^0_{j_0}\right)_{j\in\N\backslash\lc0\rc}$$ belongs to $\Hc_1$ and 
	$$\left\|L_{1,n}u^0\right\|_{\Hc_1}\leq C\left\|u^0\right\|_{\Hc_1}.$$
	This implies that the family of operators $(L_{1,n})_{n\in\N\backslash\lc0\rc}$ is bounded in $\Lc(\Hc_1)$. Using \eqref{eq:decompoTcc_sec:Cor}, we then immediately conclude that the family of operators $(\Tc^n)_{n\in\N\backslash\lc0\rc}$ is bounded in $\Lc(\Hc_1)$ and thus that the numerical scheme \eqref{def:numScheme} is $\ell^1$-stable.
	
	\vspace{0.1cm}
	\textbf{\underline{Step 3:}} Proof of \eqref{in:rateofexploK_sec:Cor} and of the $\ell^q$-instability of the numerical scheme \eqref{def:numScheme} for $q\in]1,+\infty]$ when $\Rc^c\neq0$
	
	\vspace{0.1cm}

	We fix $q\in]1,+\infty]$. Since the function $E_{2\mu}^\beta$ is continuous, \eqref{eq:E_en_-infty} implies that there exists a constant $M\in \R$ such that
	\begin{equation}\label{in:E2mu_sec:Cor}
		\forall x\leq M, \quad E_{2\mu}^\beta(x)\geq \frac{1}{2}.
	\end{equation}
	Let us consider an integer $n\in\N\backslash\lc0\rc$ such that 
	$$-\frac{n\alpha}{3}>1\quad \text{ and } \quad \frac{\alpha}{3}n^\frac{2\mu-1}{2\mu}\leq M,$$
	which is possible since $\alpha<0$. We consider an integer $J\in\Z\cap\left[-\frac{n\alpha}{3}, -2\frac{n\alpha}{3}\right]$ and define
	$$u_J:= \sum_{j_0=1}^J\delta_{j_0}\in\Hc_q.$$
	We observe that
	$$L_{q,n}u_J=\left(\sum_{j_0=1}^JE_{2\mu}^\beta\left(\frac{j_0+n\alpha}{n^\frac{1}{2\mu}}\right)\right)\Rc^c.$$
	Yet, for $j_0\in\lc1, \ppp,J\rc$, we have
	$$\frac{n\alpha+j_0}{n^\frac{1}{2\mu}}\leq \frac{\alpha}{3}n^\frac{2\mu-1}{2\mu}\leq M.$$
	Thus, \eqref{in:E2mu_sec:Cor} allows us to conclude that
	\begin{equation*}
		\left\|L_{q,n}u_J\right\|_{\Hc_q}\geq\frac{J}{2}\left\|\Rc^c\right\|_{\Hc_q}.
	\end{equation*}
	Noticing that $\left\|u_J\right\|_{\Hc_q}=J^\frac{1}{q}$, we conclude that
	$$\left\|L_{q,n}\right\|_{\Lc(\Hc_q)}\geq \frac{\left\|\Rc^c\right\|_{\Hc_q}}{2}J^{1-\frac{1}{q}}\geq \frac{\left\|\Rc^c\right\|_{\Hc_q}}{2}\left(-\frac{n\alpha}{3}\right)^{1-\frac{1}{q}}.$$
	Thus, when the boundary layer $\Rc^c$ is a nonzero sequence, the operators $L_{q,n}$ are nonzero operators and there exists a positive constant $C$ such that \eqref{in:rateofexploK_sec:Cor} is verified. Using \eqref{eq:decompoTcc_sec:Cor}, this concludes the proof of the existence of a positive constant $C$ such that \eqref{rateofexploTqn} is verified and the $\ell^q$-instability of the numerical scheme \eqref{def:numScheme}.

	\section{Spatial Green's function}\label{sec:GS}
	
	From now on, we assume that Hypotheses \ref{H:scheme} and \ref{H:spec} are verified and our goal is to prove Theorem \ref{th:Green}. Since the values of the temporal Green's function $\Gc(n,j,j_0)$ are independent of $q$, we consider that we are in the case $q=2$ and we omit the subscript $q$ when we introduce the Banach space $\Hc$. This section is dedicated to the definition and analysis of the spatial Green's functions defined below by \eqref{def:GS_Toep} and \eqref{def:GS_Laur} respectively for the operators $\Tc$ and $\Lcc$.
	
	\subsection{Resolvent set of the operators $\Tc$ and $\Lcc$}
	
	First, we study the spectrum of the operators $\Lcc$ and $\Tc$. This information is fundamental to determine the domain of definition of the spatial Green's functions for $\Tc$ and $\Lcc$. This section will be fairly similar to \cite[Section 2.1]{CF2}.
	
	First, Wiener's theorem \cite{Newman} allows us to conclude that the spectrum of $\Lcc$ is given by:
	$$\sigma(\Lcc)=F(\S^1).$$
	In particular, this implies that the connected open set $\Oc$ is included in the resolvent $\rho(\Lcc)$. We recall that $\Oc$ corresponds to the unbounded connected component of $\C \backslash F(\S^1)$ (see Figure \ref{fig:Spec}).
	
	We now shift our attention to the study of the spectrum of the operator $\Tc$.  The operator $\Tc$ is a finite rank perturbation of the Toeplitz operator defined by
	$$\begin{array}{cccc}\Tcc:& \ell^2(\N\backslash\lc0\rc)&\rightarrow  &\ell^2(\N\backslash\lc0\rc)\\ & u & \mapsto & \left(\displaystyle\sum_{\underset{j+l\geq 1}{l=-r}}^p a_l u_{j+l}\right)_{j\in\N\backslash\lc0\rc}\end{array},$$
	since the Toeplitz operator $\Tcc$ corresponds to the numerical scheme \eqref{def:numScheme} where we impose the numerical boundary conditions $u^n_{1-r}=\ppp=u^n_0=0$. The spectrum of the operator $\Tcc$ has been studied thoroughly, see for instance \cite{Duren} and \cite{TrefethenEmbree}. Mainly, the resolvent set of the operator $\Tc$ corresponds to the points on the complex plane that do not belong to the curve $F(\S^1)$ and such that the winding number of the curve $F(\S^1)$ for these points is $0$. This is the case for all points in the set $\Oc$ for instance. Furthermore, the essential spectrum of $\Tcc$ is the curve $F(\S^1)$. 
	
	We observe that since the operator $\Tc$ is a compact perturbation of the operator $\Tcc$, they share the same essential spectrum (see \cite[Chapter XI, Proposition 4.2]{Conway}) which is equal to the curve $F(\S^1)$. In particular, $1$ belongs to the essential spectrum of the operator $\Tc$. However, a careful examination of the eigenvalues of $\Tc$ is still necessary. Using the decompositions \eqref{eq:decompStableUnstable_on_Oc} and \eqref{eq:decompStableUnstable_near_1} of the vector space $\C^{p+r}$, we prove the following lemma:
	
	\begin{lemma}\label{lem:SpecTc}
		We have that
		\begin{align}
			\forall z\in \Oc\cup B_{\widetilde{\varepsilon}_0}(1), \quad &\dim \ker \Bc \cap E^s(z) =\dim \ker(zId-\Tc),\label{eq:Assertion2_lem:SpecTc}\\
			\forall z\in \Oc, \quad &\ker \Bc \cap E^s(z) = \lc0\rc \Rightarrow z\in \rho(\Tc).\label{eq:Assertion1_lem:SpecTc}
		\end{align}
	\end{lemma}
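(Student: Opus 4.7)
My strategy is to translate the eigenvalue equation $(zId-\Tc)u=0$ into a finite-dimensional linear algebra question via the companion matrix $\M(z)$, and then apply Fredholm theory for the second assertion. If $u\in\Hc_2$ satisfies $(zId-\Tc)u=0$, the identity $zu_j=\sum_{k=-r}^pa_ku_{j+k}$ for $j\geq 1$ rewrites, upon setting $U_j:=(u_{j+p-1},u_{j+p-2},\ldots,u_{j-r})^T$, as the matrix recurrence $U_{j+1}=\M(z)U_j$, whence $U_j=\M(z)^{j-1}U_1$. The membership $u\in\Hc_2$ imposes two conditions: the boundary condition $\Bc U_1=0$, and $(u_j)_{j\geq 1}\in\ell^2$, which is equivalent to $(U_j)_j\in\ell^2(\C^{p+r})$. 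Since $u$ is determined from $U_1$ by the recurrence, the linear map $\Phi_z:u\mapsto U_1$ is injective from $\ker(zId-\Tc)$ into $\C^{p+r}$, and the task reduces to identifying $\mathrm{Im}(\Phi_z)$.

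For assertion 1 with $z\in\Oc$, Lemma \ref{lem:SpecSpl} gives that $\M(z)$ has no eigenvalue on $\S^1$. A standard Jordan-decomposition argument then shows $(\M(z)^{j-1}V)_j$ lies in $\ell^2$ if and only if $V\in E^s(z)$, so that $\mathrm{Im}(\Phi_z)=\ker\Bc\cap E^s(z)$. For $z\in B_{\widetilde{\varepsilon}_0}(1)$, decompose $\C^{p+r}=E^{ss}(z)\oplus E^c(z)\oplus E^{su}(z)$ and write $V=V^{ss}+V^c+V^{su}$. The strictly unstable direction forces $V^{su}=0$, while the central direction evolves as $\kappa(z)^{j-1}R_c(z)$; at $z=1$ where $\kappa(1)=1$, this contribution is a nonzero constant sequence and hence not in $\ell^2$, so $V^c=0$ and $\mathrm{Im}(\Phi_1)=\ker\Bc\cap E^{ss}(1)=\ker\Bc\cap E^s(1)$. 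The holomorphic identification $E^s(z)=E^{ss}(z)$ on $B_{\widetilde{\varepsilon}_0}(1)$ extends the identity to the full ball.

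For assertion 2, I invoke Fredholm theory applied to the Toeplitz operator $\Tcc-zId$ whose symbol is $F-z$. For $z\in\Oc$, this symbol does not vanish on $\S^1$ and has winding number $0$ around the origin (since $\Oc$ is the unbounded component of $\C\setminus F(\S^1)$), so by the classical Toeplitz index theorem $\Tcc-zId$ is invertible, in particular Fredholm of index $0$. Since $\Tc-\Tcc$ differs only in finitely many entries near the boundary and is therefore finite-rank, $\Tc-zId$ is Fredholm of the same index $0$. Combining with the hypothesis $\ker\Bc\cap E^s(z)=\{0\}$ and the dimension formula of assertion 1, one obtains $\ker(\Tc-zId)=\{0\}$; together with index $0$ this yields surjectivity, so $\Tc-zId$ is bijective and $z\in\rho(\Tc)$.

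The main obstacle is the $z\in B_{\widetilde{\varepsilon}_0}(1)$ portion of assertion 1, where the central eigenvalue $\kappa(z)$ may cross $\S^1$ as $z$ varies, so that the correct criterion for $(\M(z)^{j-1}V)_j\in\ell^2$ mixes information from $E^{ss}(z)$ and $E^c(z)$ depending on the sign of $|\kappa(z)|-1$. Making the dimension count consistent across the ball requires Hypothesis \ref{H:spec} together with the dissipativity pinning of $F(\S^1)$ to $\S^1$ at $1$ from the inside, which controls the direction in which $\kappa(z)$ can move off the unit circle and preserves the identification $\mathrm{Im}(\Phi_z)=\ker\Bc\cap E^s(z)$.
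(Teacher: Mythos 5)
Your use of Fredholm theory to prove Assertion \eqref{eq:Assertion1_lem:SpecTc} is a genuinely different route from the paper's. The paper instead integrates the resolvent system \eqref{eq:systVectEig} explicitly using the hyperbolic dichotomy of $\M(z)$, obtaining the formulas \eqref{eq:U}--\eqref{eq:S}; this is lengthier, but those same formulas are reused right afterward in the construction and estimation of the spatial Green's function, so the investment pays off. Your argument---invertibility of $zId-\Tcc$ for $z\in\Oc$ (winding number $0$, as the paper itself records), finite rank of $\Tc-\Tcc$, hence $zId-\Tc$ Fredholm of index $0$, and trivial kernel from Assertion \eqref{eq:Assertion2_lem:SpecTc} implying bijectivity---is correct and more economical if one does not need the explicit resolvent expressions.

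For Assertion \eqref{eq:Assertion2_lem:SpecTc}, your treatment of $z\in\Oc$ and $z=1$ is sound and matches the paper's, but the passage to the rest of $B_{\widetilde{\varepsilon}_0}(1)$ is not a proof. The sentence claiming the holomorphic identification $E^s(z)=E^{ss}(z)$ ``extends the identity to the full ball'' is a non sequitur: that identification is simply the definition of the extension of $E^s$, and says nothing about which vectors $V$ yield $(\M(z)^{j-1}V)_j\in\ell^2$. Your closing paragraph appeals to dissipativity to argue that $\kappa(z)$ cannot leave $\overline{\Uc}$, but that is false: for $z\in B_{\widetilde{\varepsilon}_0}(1)$ on the bounded side of $F(\S^1)$ one has $\kappa(z)\in\D$ (since $\kappa$ is the local inverse of $F$ and $F$ maps $\Uc$ locally onto $\Oc$), so the $\ell^2$-criterion gives $\mathrm{Im}(\Phi_z)=\ker\Bc\cap\bigl(E^{ss}(z)\oplus E^c(z)\bigr)$, whose dimension is at least $(r+1)+p-(p+r)=1$ by a dimension count, whereas $\ker\Bc\cap E^{s}(z)=\ker\Bc\cap E^{ss}(z)=\{0\}$ there under Hypothesis \ref{H:spec}. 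The claimed identity therefore genuinely breaks on that sliver. The paper's proof sketch (which defers to \cite{CF2}) does not address this either, and the lemma is only ever invoked at $z=1$ and for $z\in\Oc$, so no downstream conclusion is affected; but you should restrict the claim to those points (or to $z$ with $|\kappa(z)|\geq 1$) rather than assert the case is handled when your own argument shows it is not.
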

	
	Hypothesis \ref{H:spec} allows us to conclude that $1$ is a simple eigenvalue of $\Tc$ which lies inside of its essential spectrum. Furthermore, we have
	\begin{equation}\label{resolvTc}
		\overline{\Uc}\backslash\lc1\rc\subset\rho(\Tc)\quad \text{ and } \quad B_{\widetilde{\varepsilon}_0}(1)\cap\Oc \subset\rho(\Tc).
	\end{equation}
	
	\begin{proof}
		The proofs of Assertions \eqref{eq:Assertion2_lem:SpecTc} and \eqref{eq:Assertion1_lem:SpecTc} use the same method as the proof of \cite[Lemma 2.1]{CF2} that we will shortly summarize here. We let the interested reader investigate and adapt the details using \cite{CF2}.
		
		The starting point of the proof is to look for a solution $w\in\Hc$ of the recurrence relation with $z\in \C$ and $f\in \Hc$: 
		\begin{equation}\label{eq:eigProb}
			(zId_\Hc- \Tc)w=f.
		\end{equation}
		We introduce the vectors 
		$$\forall j\geq 1, \quad W_j:=\begin{pmatrix}
			w_{j+p-1} \\ \vdots \\ w_{j-r}
		\end{pmatrix}\in \C^{p+r}, \quad e=\begin{pmatrix}
			1 \\ 0\\\vdots \\ 0
		\end{pmatrix}\in \C^{p+r},$$
		so that we can rewrite \eqref{eq:eigProb} as the dynamical system\footnote{We use here the fact that $p_b\leq p$. The case $p_b>p$ could be dealt similarly but would use heavier notations.}
		\begin{equation}\label{eq:systVectEig}
			\lc\begin{array}{cc}
				\forall j\geq 1, & W_{j+1} = \M(z) W_j - \frac{f_j}{a_p}e,\\
				&\Bc W_1=0,
			\end{array}\right.
		\end{equation}
		where the matrices $\M(z)$ and $\Bc$ are respectively defined by \eqref{def:M} and \eqref{def:B}. 
		
		$\blacktriangleright$ To prove Assertion \eqref{eq:Assertion2_lem:SpecTc}, we prove that the linear map
		$$\begin{array}{ccc} \ker (zId-\Tc) &\rightarrow & \ker \Bc \cap E^s(z)\\ w & \mapsto & W_1= \begin{pmatrix}
				w_{p} \\ \vdots \\ w_{1-r}
			\end{pmatrix}\end{array}$$
		is well-defined and invertible. Its inverse will be the linear map 
		$$\begin{array}{ccc}  \ker \Bc \cap E^s(z) &\rightarrow & \ker (zId-\Tc)\\ W_1 &\mapsto & ((W_j)_p)_{j\geq 1}\end{array}$$
		where $(W_j)_{j\geq1}= (\M(z)^{j-1}W_1)_{j\geq1}$ is the solution of \eqref{eq:systVectEig} for $f=0$ and $(X)_p$ denotes the $p$th coefficient of $X\in\C^{p+r}$.
		
		$\blacktriangleright$ The proof of Assertion \eqref{eq:Assertion1_lem:SpecTc} is more technical and relies on the decomposition \eqref{eq:decompStableUnstable_on_Oc} of $\C^{p+r}$ using the stable and unstable subspaces $E^s(z)$ and $E^u(z)$ of the matrix $\M(z)$. The idea is to use the hyperbolic dichotomy of $\M(z)$ to carefully integrate the system \eqref{eq:systVectEig}. This yields the expressions:
		\begin{align}
			&\forall j\geq 1,& \pi^u(z)W_j & =\sum_{k=0}^{+\infty}\frac{f_{j+k}}{a_p}\M(z)^{-1-k}\pi^u(z)e,\label{eq:U}\\
			& & \Bc\pi^s(z)W_1 & =-\Bc\pi^u(z)W_1,\label{eq:B}\\
			&\forall j\geq 1, & \pi^s(z)W_j & =\M(s)^{j-1}\pi^s(z)W_1-\sum_{k=1}^{j-1}\frac{f_{k}}{a_p}\M(z)^{j-1-k}\pi^s(z)e.\label{eq:S}
		\end{align}
		The exponential decay of the sequences $(\M(z)^j\pi^s(z))_{j\in\N}$ and $(\M(z)^{-j}\pi^u(z))_{j\in\N}$ implies that the series 
		$$\sum_{k=0}^{+\infty}\frac{f_{j+k}}{a_p}\M(z)^{-1-k}\pi^u(z)e \quad \text{ and } \quad \sum_{k=1}^{j-1}\frac{f_{k}}{a_p}\M(z)^{j-1-k}\pi^s(z)e$$
		converge. We observe that $\Bc_{|E^s(z)}$ is an isomorphism from $E^s(z)$ onto $\C^r$ because of Remark \ref{remH:spec}. Since $\pi^u(z)W_1$ is defined by \eqref{eq:U}, we can deduce $\pi^s(z)W_1$ from \eqref{eq:B} and then construct two sequences $(\pi^u(z)W_j)_{j\geq1}$ and $(\pi^s(z)W_j)_{j\geq1}$ which are solutions of \eqref{eq:U}-\eqref{eq:S}. This allows us to construct a solution $w\in \Hc$ of \eqref{eq:eigProb} that depends continuously on $f$ and that we can prove to be unique. 
	\end{proof}
	
	Now that we have a clearer idea of the localization of the spectrum of the operators $\Tc$ and $\Lcc$, we can define the spatial Green's functions of those operators.
	
	\subsection{Definition and estimates of the spatial Green's function}
	
	For $j_0\in\N\backslash\lc0\rc$ and $z\in\rho(\Tc)$, we define the spatial Green's function $G(z,j_0,\cdot)\in\Hc$ associated with the operator $\Tc$ as
	\begin{equation}\label{def:GS_Toep}
		G(z,j_0,\cdot):= (zId_\Hc-\Tc)^{-1}\delta_{j_0}
	\end{equation}
	where $\delta_{j_0}$ is defined by \eqref{def:DiracT}. We also define for $j_0\in\N\backslash\lc0\rc$ and $z\in\rho(\Lcc)$ the spatial Green's function $\widetilde{G}(z,\cdot)\in\ell^2(\Z)$ associated with the operator $\Lcc$ as
	\begin{equation}\label{def:GS_Laur}
		\quad \widetilde{G}(z,\cdot):= (zId_{\ell^2(\Z)}-\Lcc)^{-1}\widetilde{\delta}.
	\end{equation}
	where $\widetilde{\delta}$ is defined by \eqref{def:DiracL}. We notice that both functions $G(\cdot,j_0,j)$ and $\widetilde{G}(\cdot,j)$ are defined and holomorphic on $\Oc\cap\rho(\Tc)$ and in particular on $\overline{\Uc}\backslash\lc1\rc$ and $B_{\widetilde{\varepsilon}_0}(1)\cap\Oc$ because of \eqref{resolvTc}. 
	
	The temporal Green's function $\Gc(n,j_0,j)$ and $\widetilde{\Gc}(n,j)$ can be expressed using the spatial Green's function $G(\cdot,j_0,j)$ and $\widetilde{G}(\cdot,j)$ we just introduced (see \eqref{egGTGSToep} and \eqref{egGTGSLaur} below). To obtain the estimates on $\mathrm{Err}(n,j_0,j)$ expected in Theorem \ref{th:Green}, we will need to study the difference of those two spatial Green's function defined by
	\begin{equation}\label{def:R}
		\forall z\in \Oc\cap\rho(\Tc),\forall j_0,j\in\N\backslash\lc0\rc, \quad R(z,j_0,j):=G(z,j_0,j)- \widetilde{G}(z,j-j_0).
	\end{equation}
	In this section, we prove local uniform estimates of $R(z,j_0,j)$ and meromorphic extensions for $R(\cdot,j_0,j)$ in a neighborhood of $1$ with a pole of order $1$ at $z=1$. The first step will be to find an alternative expression of $R(z,j_0,j)$.

	In the same manner as in the proof of Lemma \ref{lem:SpecTc}, we introduce the following vectors for $z\in \Oc\cap \rho(\Tc)$
	$$\forall j,j_0\in\N\backslash\lc0\rc, \quad W(z,j_0,j):=\begin{pmatrix}G(z,j_0,j+p-1)\\ \vdots\\G(z,j_0,j-r) \end{pmatrix}\in\C^{p+r}$$
	and
	$$\forall j\in\Z, \quad \widetilde{W}(z,j):=\begin{pmatrix}\widetilde{G}(z,j+p-1)\\ \vdots\\\widetilde{G}(z,j-r) \end{pmatrix}\in\C^{p+r}.$$
	Both function $W(z,j_0,j)$ and $\widetilde{W}(z,j)$ depend holomorphically on $z$. Using the definition \eqref{def:Toep} and \eqref{def:Laur} of the operators $\Tc$ and $\Lcc$, we prove that they are solutions of the following systems for $z\in \Oc\cap\rho(\Tc)$ and $j_0\in\N\backslash\lc0\rc$:
	\begin{equation*}
		\lc\begin{array}{cc}
			\forall j\geq 1,\quad & W(z,j_0,j+1) = \M(z) W(z,j_0,j) - \displaystyle\frac{\ind_{j=j_0}}{a_p}e,\\
			&\Bc W(z,j_0,1) =0. 
		\end{array}\right.
	\end{equation*}
	and
	\begin{equation*}
		\forall j\in\Z, \quad \widetilde{W}(z,j+1) = \M(z) \widetilde{W}(z,j) - \frac{\ind_{j=0}}{a_p}e.
	\end{equation*}
	Using the projectors $\pi^s(z)$ and $\pi^u(z)$ introduced via the decomposition \eqref{eq:decompStableUnstable_on_Oc} in the same manner as to obtain the equalities \eqref{eq:U}-\eqref{eq:S}, we obtain that for $z\in\Oc\cap\rho(\Tc)$ and $j,j_0\in \N\backslash\lc0\rc$
	\begin{align}
		\pi^{u}(z)W(z,j_0,j) & =\frac{\ind_{j\in]-\infty,j_0]}}{a_p}\M(z)^{j-(j_0+1)}\pi^{u}(z)e,\label{eq:U_GS_toep}\\
		\Bc\pi^{s}(z)W(z,j_0,1) & =-\Bc\pi^{u}(z)W(z,j_0,1),\label{eq:B_GS_toep}\\
		\pi^{s}(z)W(z,j_0,j) & =\M(z)^{j-1}\pi^{s}(z)W(z,j_0,1)-\frac{\ind_{j\in[j_0+1,+\infty[}}{a_p}\M(z)^{j-(j_0+1)}\pi^{s}(z)e.\label{eq:S_GS_toep}
	\end{align}
	and for $z\in\Oc\cap\rho(\Tc)$ and $j\in\Z$
	\begin{align}
		\pi^{u}(z)\widetilde{W}(z,j) & =\frac{\ind_{j\in]-\infty,0]}}{a_p}\M(z)^{j-1}\pi^{u}(z)e,\label{eq:U_GS_Laur}\\
		\pi^{s}(z)\widetilde{W}(z,j) & =-\frac{\ind_{j\in[1,+\infty[}}{a_p}\M(z)^{j-1}\pi^{s}(z)e.\label{eq:S_GS_Laur}
	\end{align}
	Therefore, summing equalities \eqref{eq:U_GS_toep} and \eqref{eq:S_GS_toep} and using equations \eqref{eq:U_GS_Laur} and \eqref{eq:S_GS_Laur}, we have:
	\begin{equation}\label{eq:R}
		\forall z\in \Oc \cap \rho(\Tc), \forall j_0,j\in\N\backslash\lc0\rc,\quad R(z,j_0,j)=(\M(z)^{j-1}\pi^{s}(z)W(z,j_0,1))_p
	\end{equation}
	where $(X)_p$ denotes the $p$th coefficient of $X\in\C^{p+r}$ and we recall that the remainder $R(z,j_0,j)$ is defined by \eqref{def:R}. The expressions \eqref{def:R} and \eqref{eq:R} are the starting point for studying $R(z,j_0,j)$. 
	
	The following lemma is an equivalent form of \cite[Lemma 5]{CF2} and gives local uniform exponential bounds on $R(z,j_0,j)$ far from the spectrum of $\Tc$. 
	\begin{lemma}\label{lem:GS_loin}
		For $z_0\in\Oc\cap\rho(\Tc)$, there exist a radius $\delta>0$ and two positive constants $C,c$ such that $B_\delta(z_0)$ is contained within $\Oc\cap\rho(\Tc)$ and 
		$$\forall z\in B_\delta(z_0),\forall j,j_0\geq1,\quad |R(z,j_0,j)|\leq C\exp(-c(j+j_0)).$$
	\end{lemma}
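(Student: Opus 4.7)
The plan is to derive the desired exponential decay directly from the formula \eqref{eq:R}, combined with the equations \eqref{eq:U_GS_toep}--\eqref{eq:S_GS_toep}, by exploiting the spectral gap of $\M(z)$ that is guaranteed by Lemma \ref{lem:SpecSpl} throughout $\Oc$.

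First I would note that $\Oc \cap \rho(\Tc)$ is open (as the intersection of two open sets), so one can pick $\delta>0$ small enough that the closed ball $\overline{B_\delta(z_0)}$ still lies inside $\Oc \cap \rho(\Tc)$. On this compact set, Lemma \ref{lem:SpecSpl} ensures that the spectrum of $\M(z)$ splits into $r$ eigenvalues strictly inside $\S^1$ and $p$ eigenvalues strictly outside, with a uniform gap. The projectors $\pi^s(z)$ and $\pi^u(z)$ depend holomorphically on $z\in\Oc$, so by compactness there exist constants $C>0$ and $\rho\in(0,1)$ such that
\begin{equation*}
\forall z\in \overline{B_\delta(z_0)},\ \forall j\in \N,\quad \|\M(z)^{j}\pi^s(z)\| \leq C\rho^{j},\quad \|\M(z)^{-j}\pi^u(z)\| \leq C\rho^{j}.
\end{equation*}

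Next, using \eqref{eq:U_GS_toep} at $j=1$ and $j_0\geq 1$, we have $\pi^u(z)W(z,j_0,1) = \frac{1}{a_p}\M(z)^{-j_0}\pi^u(z)e$, which already yields a bound of the form $C\rho^{j_0}$ uniformly on $\overline{B_\delta(z_0)}$. Since $\Bc_{|E^s(z)}$ is an isomorphism from $E^s(z)$ onto $\C^r$ (this is Remark \ref{remH:spec}, which applies since $B_\delta(z_0)\subset\Oc\cap\rho(\Tc)$ and Lemma \ref{lem:SpecTc} forces $\ker\Bc\cap E^s(z)=\lc0\rc$ there), and since $(\Bc_{|E^s(z)})^{-1}$ depends continuously on $z$ and is therefore uniformly bounded on the compact set $\overline{B_\delta(z_0)}$, equation \eqref{eq:B_GS_toep} gives
\begin{equation*}
\pi^s(z)W(z,j_0,1) = -\bigl(\Bc_{|E^s(z)}\bigr)^{-1}\Bc\,\pi^u(z)W(z,j_0,1),
\end{equation*}
whence $\|\pi^s(z)W(z,j_0,1)\| \leq C\rho^{j_0}$ uniformly on $\overline{B_\delta(z_0)}$ (up to enlarging $C$).

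Finally, applying $\M(z)^{j-1}\pi^s(z)$ and using the stable bound above yields
\begin{equation*}
\|\M(z)^{j-1}\pi^s(z)W(z,j_0,1)\| \leq C^2 \rho^{j-1}\rho^{j_0},
\end{equation*}
and taking the $p$-th coordinate as in \eqref{eq:R} gives $|R(z,j_0,j)| \leq C'e^{-c(j+j_0)}$ for some $c>0$ and $C'>0$, as desired. The main (modest) obstacle is simply the bookkeeping required to extract a single pair of constants $(C,c)$ valid uniformly on $B_\delta(z_0)$; this is handled by the compactness of $\overline{B_\delta(z_0)}$ together with the holomorphic (hence continuous) dependence of $\M(z)$, $\pi^{s,u}(z)$, and $\Bc_{|E^s(z)}^{-1}$. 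No new analytic ingredient beyond those already stated is needed, which is consistent with the observation that this result is the analog of \cite[Lemma 5]{CF2} away from the critical point $1$.
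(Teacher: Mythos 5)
Your proof is correct and follows essentially the same route as the paper's own argument: uniform hyperbolic dichotomy for $\M(z)^{\pm j}\pi^{s,u}(z)$ on a small ball around $z_0$, the bound $|\pi^u(z)W(z,j_0,1)|\lesssim e^{-cj_0}$ coming from \eqref{eq:U_GS_toep}, the uniform invertibility of $\Bc_{|E^s(z)}$ (via Lemma~\ref{lem:SpecTc} and Remark~\ref{remH:spec}) to transfer this bound to $\pi^s(z)W(z,j_0,1)$ through \eqref{eq:B_GS_toep}, and finally the stable decay applied to \eqref{eq:R}. The only cosmetic difference is that you invoke compactness of a closed ball where the paper simply shrinks $\delta$, which are interchangeable devices for obtaining uniform constants.
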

	
	We observe that the set $\overline{\Uc}\backslash\lc1\rc$ is included in $\Oc\cap\rho(\Tc)$ because of \eqref{resolvTc} and Hypothesis \ref{H:scheme}. Thus, Lemma \ref{lem:GS_loin} can be applied on all points of the set $\overline{\Uc}\backslash\lc1\rc$.
	
	\begin{proof}
		Since $\Oc\cap\rho(\Tc)$ is an open set, we can consider $\delta>0$ such that 
		$$ B_\delta(z_0)\subset\Oc\cap\rho(\Tc).$$
		Up to considering a smaller radius $\delta>0$, the families of matrices $(\M(z)^j\pi^{s}(z))_{j\in\N}$ and $(\M(z)^{-j}\pi^{u}(z))_{j\in\N}$ decay uniformly exponentially, i.e. there exist two positive constants $C,c$ such that
		\begin{equation}\label{in:DichHyperb_lem:GS_loin}
			\forall z\in B_{\delta}(z_0),\forall j\in\N, \quad \begin{array}{cc}\left\|\M(z)^j\pi^{s}(z)\right\| & \leq Ce^{-cj},\\ \left\|\M(z)^{-j}\pi^{u}(z)\right\| & \leq Ce^{-cj}.\end{array}
		\end{equation}		
		Thus, \eqref{in:DichHyperb_lem:GS_loin} implies 
		\begin{equation}\label{in:SInterm_lem:GS_loin}
			\forall z\in B_{\delta}(z_0),\forall j_0,j\in\N\backslash\lc0\rc, \quad \left|\M(z)^{j-1}\pi^{s}(z)W(z,j_0,1)\right|\leq Ce^{-c(j-1)}\left|\pi^{s}(z)W(z,j_0,1)\right|.
		\end{equation}
		Furthermore, using \eqref{eq:U_GS_toep} and \eqref{in:DichHyperb_lem:GS_loin}, we prove the existence of a constant $C>0$ such that
		\begin{equation}\label{in:U_lem:GS_loin}
			\forall z\in B_{\delta}(z_0),\forall j_0\in\N\backslash\lc0\rc, \quad \left|\pi^{u}(z)W(z,j_0,1)\right|\leq Ce^{-cj_0}.
		\end{equation}
		Since the linear map $\Bc_{|E^s(z)}$ is an isomorphism for all $z\in B_{\delta}(z_0)$, up to considering a smaller $\delta$, there exists a constant $C>0$ such that
		\begin{equation}\label{in:B_lem:GS_loin}
			\forall z\in B_{\delta}(z_0),\forall x\in E^s(z), \quad \left|\Bc x\right|\geq C|x|.
		\end{equation}
		Using \eqref{eq:B_GS_toep}, \eqref{in:U_lem:GS_loin} and \eqref{in:B_lem:GS_loin}, we prove that there exists a positive constant $C$ such that
		\begin{equation}\label{in:S_lem:GS_loin}
			\forall z\in B_{\delta}(z_0),\forall j_0\in\N\backslash\lc0\rc, \quad \left|\pi^{s}(z)W(z,j_0,1)\right|\leq Ce^{-cj_0}.
		\end{equation}
		Combining \eqref{in:SInterm_lem:GS_loin} and \eqref{in:S_lem:GS_loin} allows us to conclude that there exist two constants $C,c>0$ such that
		$$\forall z\in B_{\delta}(z_0),\forall j_0,j\in\N\backslash\lc0\rc, \quad \left|\M(z)^{j-1}\pi^{s}(z)W(z,j_0,1)\right|\leq Ce^{-c(j+j_0)}.$$
		Using \eqref{eq:R}, we conclude the proof.
	\end{proof}
	
	We will now study the function $R(z,j_0,j)$ defined by \eqref{def:R} in a neighborhood of $1$ and introduce an equivalent of \cite[Lemma 6]{CF2} with some important differences. Let us first recall that the curve $F(\S^1)$ corresponds to the essential spectrum of the operator $\Tc$ and that $1$ is a simple eigenvalue of $\Tc$. The function $R(z,j_0,j)$ is thus only defined outside the curve $F(\S^1)$ in a neighborhood on $1$. The following lemma proves that $R(z,j_0,j)$ can actually be meromorphically extended near $1$ with a pole of order $1$ at $1$. We point out in advance that the radius $\widetilde{\varepsilon}_0$ present in the statement of the following lemma comes from the decomposition \eqref{eq:decompStableUnstable_near_1} of the vector space $\C^{p+r}$ deduced from the study of the spectrum of the matrix $\M(z)$ in a neighborhood $B_{\widetilde{\varepsilon}_0}(1)$ of $1$.
	
	\begin{lemma}\label{lem:GS_près} 
		There exists a radius $\widetilde{\varepsilon}_1\in]0,\widetilde{\varepsilon}_0[$ such that for all $j_0,j\in\N\backslash\lc0\rc$, the function 
		$$ z\in\Oc\cap\rho(\Tc)\mapsto R(z,j_0,j) $$
		can be meromorphically extended on $B_{\widetilde{\varepsilon}_1}(1)$ with a pole of order $1$ at $1$. Furthermore, there exist some holomorphic complex valued functions $P^{u}(\cdot,j_0,j)$ and $P^{c}(\cdot,j_0,j)$ defined on $B_{\widetilde{\varepsilon}_1}(1)$ for all $j_0,j\in\N\backslash\lc0\rc$ such that :
		\begin{itemize}
			\item The following equality is satisfied
			\begin{equation}\label{egDecW}
				\forall j_0,j\in\N\backslash\lc0\rc,\forall z\in B_{\widetilde{\varepsilon}_1}(1)\backslash\lc1\rc,\quad R(z,j_0,j)= \frac{P^{c}(z,j_0,j)}{z-1}+\frac{P^{u}(z,j_0,j)}{z-1}.
			\end{equation}
			
			\item  There exist some positive constants $C,c$ independent from $z, j_0$ and $j$ such that:
			\begin{equation}\label{inPruprès}
				\forall z\in B_{\widetilde{\varepsilon}_1}(1), \forall j_0,j\in\N\backslash\lc0\rc,\quad |P^{u}(z,j_0,j)|\leq Ce^{-cj-cj_0}.
			\end{equation}
			We will notate 
			\begin{equation}\label{defRu}
				\forall j_0,j\in\N\backslash\lc0\rc,\quad\Rc^u(j_0,j):=P^{u}(1,j_0,j).
			\end{equation}
			
			\item The function $P^{c}$ satisfies the following estimates where $C,c$ are some positive constants independent from $z, j_0$ and $j$ and $\Rc^c:=(\Rc^{c}(j))_{j\in\N\backslash\lc0\rc}$ is a complex valued family:
			\begin{equation}\label{inPrcprès}
				\forall z\in B_{\widetilde{\varepsilon}_1}(1), \forall j_0,j\in\N\backslash\lc0\rc,\quad \begin{array}{c}
					|P^{c}(z,j_0,j)|\leq Ce^{-cj}|\kappa(z)|^{-j_0},\\
					|\Rc^{c}(j)|\leq Ce^{-cj},\\
					|P^{c}(z,j_0,j)-\Rc^{c}(j)\kappa(z)^{-j_0}|\leq C|z-1|e^{-cj}|\kappa(z)|^{-j_0}.
				\end{array}
			\end{equation}
			
			\item The sequence $\Rc^c$ satisfies:
			\begin{equation}\label{lem:GS_près:condRc}
				\Rc^c=0 \Leftrightarrow \Bc\begin{pmatrix}
					1& \hdots  &1
				\end{pmatrix}^T\in \Bc E^s(1).
			\end{equation}
		\end{itemize}
	\end{lemma}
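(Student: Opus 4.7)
The approach is to start from the representation \eqref{eq:R}, solve the boundary equation \eqref{eq:B_GS_toep} by inverting the isomorphism $\Bc_{|E^s(z)}$ (which by Hypothesis \ref{H:spec} is well-defined on $(\Oc\cup B_{\widetilde{\varepsilon}_0}(1))\setminus\{1\}$), and plug in \eqref{eq:U_GS_toep} to obtain the unified formula
$$R(z,j_0,j)=-\frac{1}{a_p}\Bigl(\M(z)^{j-1}(\Bc_{|E^s(z)})^{-1}\Bc\,\M(z)^{-j_0}\pi^u(z)e\Bigr)_{p}.$$
Splitting $\pi^u(z)=\pi^c(z)+\pi^{su}(z)$ holomorphically via \eqref{eq:decompStableUnstable_near_1} and using that $\pi^c(z)e\in E^c(z)=\mathrm{Span}(R_c(z))$ is a $\kappa(z)$-eigenvector of $\M(z)$ (so $\M(z)^{-j_0}\pi^c(z)e=\kappa(z)^{-j_0}\pi^c(z)e$) separates the two contributions. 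The pole at $z=1$ arises entirely from $(\Bc_{|E^s(z)})^{-1}$: applying Cramer's rule in a holomorphic basis of $E^s(z)$, its entries are holomorphic divided by $\Delta(z)$, so the simple zero of $\Delta$ at $1$ gives a simple pole there and $(z-1)(\Bc_{|E^s(z)})^{-1}$ extends holomorphically to $B_{\widetilde{\varepsilon}_0}(1)$.

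This motivates defining, for $\widetilde{\varepsilon}_1\in\,]0,\widetilde{\varepsilon}_0[$ small enough,
\begin{align*}
P^c(z,j_0,j)&:=-\frac{(z-1)\kappa(z)^{-j_0}}{a_p}\Bigl(\M(z)^{j-1}(\Bc_{|E^s(z)})^{-1}\Bc\pi^c(z)e\Bigr)_{p},\\
P^u(z,j_0,j)&:=-\frac{(z-1)}{a_p}\Bigl(\M(z)^{j-1}(\Bc_{|E^s(z)})^{-1}\Bc\,\M(z)^{-j_0}\pi^{su}(z)e\Bigr)_{p},
\end{align*}
both holomorphic on $B_{\widetilde{\varepsilon}_1}(1)$ and, by the computation above, summing to $(z-1)R(z,j_0,j)$ so that \eqref{egDecW} holds. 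The bounds follow from two hyperbolic dichotomies: since $(\Bc_{|E^s(z)})^{-1}$ takes values in $E^s(z)$, the uniform exponential decay of $(\M(z)^{j-1}\pi^s(z))_{j\geq 1}$ on $B_{\widetilde{\varepsilon}_1}(1)$ produces the factor $e^{-cj}$ in both $P^c$ and $P^u$, while for $P^u$ the strictly unstable dichotomy gives $\|\M(z)^{-j_0}\pi^{su}(z)\|\leq Ce^{-cj_0}$, yielding \eqref{inPruprès}. For $P^c$, writing $P^c(z,j_0,j)=\kappa(z)^{-j_0}Q(z,j)$ with $Q$ holomorphic in $z$ and $|Q(z,j)|\leq Ce^{-cj}$, we set $\Rc^c(j):=Q(1,j)=P^c(1,j_0,j)$ (independent of $j_0$ because $\kappa(1)=1$); the first two lines of \eqref{inPrcprès} are then immediate, and the third follows from a Cauchy-type estimate $|Q(z,j)-Q(1,j)|\leq C|z-1|e^{-cj}$ on a disk of fixed radius around $1$.

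The most delicate step is \eqref{lem:GS_près:condRc}. Let $A:=\lim_{z\to 1}(z-1)(\Bc_{|E^s(z)})^{-1}$. Since $\Delta$ has a simple zero at $1$ and $\dim(\ker\Bc\cap E^s(1))=1$, the operator $A$ has rank one, with image equal to $\ker\Bc\cap E^s(1)$ and kernel equal to the hyperplane $\Bc E^s(1)\subset\C^r$. Because $R_c(1)=(1,\ldots,1)^T$, one has $\pi^c(1)e=\ell_1\,(1,\ldots,1)^T$ where $\ell_1$ is the first entry of the left eigenvector of $\M(1)$ for the eigenvalue $1$; the companion structure of $\M(1)$ forces $\ell_1\neq 0$ (the recurrence $\ell^T\M(1)=\ell^T$ determines $\ell_2,\ldots,\ell_{p+r}$ uniquely from $\ell_1$, so $\ell_1=0$ would give $\ell=0$). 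Hence
$$\Rc^c(j)=-\frac{\ell_1}{a_p}\bigl(\M(1)^{j-1}\,A\,\Bc(1,\ldots,1)^T\bigr)_p.$$
The forward implication of \eqref{lem:GS_près:condRc} is then immediate from $\ker A=\Bc E^s(1)$. For the converse, if $\Rc^c\equiv 0$ and $v:=A\,\Bc(1,\ldots,1)^T\in E^s(1)$, then $(\M(1)^{j-1}v)_p=w_j=0$ for all $j\geq 1$, where $(w_k)_{k\geq 1-r}$ is the sequence built from $W_1=v$ by the recurrence $\sum_{k=-r}^p a_k w_{j+k}=w_j$. The vanishing of $w_1,\ldots,w_p$ shows that the first $p$ entries of $v$ are zero, and iterating the recurrence for $j=r,r-1,\ldots,1$ and solving for $w_0,w_{-1},\ldots,w_{1-r}$ successively (this uses crucially $a_{-r}\neq 0$) yields that the remaining $r$ entries vanish too; hence $v=0$, i.e.\ $\Bc(1,\ldots,1)^T\in\ker A=\Bc E^s(1)$. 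The main obstacle is precisely this observability argument, which hinges on the companion form of $\M(1)$ together with the normalization $a_{-r}\neq 0$; the remaining steps are a careful bookkeeping of the stable and strictly unstable dichotomies around the simple pole of $(\Bc_{|E^s(z)})^{-1}$.
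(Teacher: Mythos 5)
Your proof is correct and follows the same overall route as the paper: isolate the simple pole at $z=1$ from Cramer's rule applied to the restricted boundary map $\Bc_{|E^s(z)}$ (the paper's holomorphic function $D(z)$ defined in \eqref{lem:GS_près:def:D} is exactly $(z-1)(\Bc_{|E^s(z)})^{-1}$ expressed in the basis $(e_k(z))$), split $\pi^u=\pi^c+\pi^{su}$ to separate the central and strictly unstable contributions via \eqref{egUW}--\eqref{egSW}, and use the dichotomy bounds \eqref{inExpoDec}--\eqref{inExpoDecZ} for the estimates; your ``Cauchy estimate on a slightly larger disk'' replaces the paper's Lemma \ref{lemExpoDec} for the third line of \eqref{inPrcprès}, which is an equivalent and somewhat lighter device. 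The place where you genuinely reorganize is \eqref{lem:GS_près:condRc}. Rather than tracking the cofactor determinants in \eqref{eg:Rcc}, you package the residue of the boundary inversion as the rank-one operator $A=\lim_{z\to1}(z-1)(\Bc_{|E^s(z)})^{-1}$ and read off its image $\ker\Bc\cap E^s(1)$ and kernel $\Bc E^s(1)$; the forward implication is then immediate. For the converse, your observability argument (the companion recurrence with $a_{-r}\neq0$ propagates $w_j=0$ for $j\geq1$ backward to $w_0,\ldots,w_{1-r}$, hence $v=0$) is a direct substitute for the paper's use of the invertibility of $\M(1)$ together with the linear independence of $(e_k(1))$; likewise your abstract argument for $\ell_1\neq0$ (the companion form determines the remaining entries of the left eigenvector from $\ell_1$) replaces the paper's explicit computation $\ell_1=a_p/\alpha$. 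All of these alternatives rest on the same underlying algebraic facts — the companion structure of $\M(1)$ and the normalization $a_{-r}a_p\neq0$ — so the difference is one of packaging rather than substance, but the rank-one framing of $A$ does make the proof of the equivalence \eqref{lem:GS_près:condRc} cleaner and less computational than the paper's.
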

	
	The two sequences $(\Rc^c(j))_{j\in\N\backslash\lc0\rc}$ and $(\Rc^u(j_0,j))_{j_0,j\in\N\backslash\lc0\rc}$ will correspond to the same sequences introduced in the statement of Theorem \ref{th:Green}.
	
	Let us observe that Lemma \ref{lem:GS_près} and \cite[Lemma 6]{CF2} have the same goal but do not state the same result. We recall that in \cite{CF2}, the authors suppose that the Lopatinskii determinant  $\Delta$ does not vanish at $1$ and thus that $1$ is not an eigenvalue of the operator $\Tc$. This allows in \cite[Lemma 6]{CF2} for an holomorphic extension of the spatial Green's function $G(z,j_0,j)$ on a whole neighborhood of $1$.
	
	\begin{proof}
		Using the projectors $\pi^{ss}(z)$, $\pi^c(z)$ and $\pi^{su}(z)$ defined via the decomposition \eqref{eq:decompStableUnstable_near_1}, the equalities \eqref{eq:U_GS_toep}-\eqref{eq:S_GS_toep} imply for $z\in  B_{\widetilde{\varepsilon}_0}(1)\cap\Oc$ and $j_0,j\in\N\backslash\lc0\rc$
		\begin{align}
			\pi^{su}(z)W(z,j_0,j) & =\frac{\ind_{j\in]-\infty,j_0]}}{a_p}\M(z)^{j-(j_0+1)}\pi^{su}(z)e,\label{egUW}\\
			\pi^c(z)W(z,j_0,j) & =\frac{\ind_{j\in]-\infty,j_0]}}{a_p}\kappa(z)^{j-(j_0+1)}\pi^c(z)e,\label{egCW}\\
			\Bc\pi^{ss}(z)W(z,j_0,1) & =-\Bc\left(\pi^{su}(z)W(z,j_0,1)+\pi^c(z)W(z,j_0,1)\right),\label{egBW}\\
			\pi^{ss}(z)W(z,j_0,j) & =\M(z)^{j-1}\pi^{ss}(z)W(z,j_0,1)-\frac{\ind_{j\in[j_0+1,+\infty[}}{a_p}\M(z)^{j-(j_0+1)}\pi^{ss}(z)e.\label{egSW}
		\end{align}
		We want to extend meromorphically the function
		$$z\in B_{\widetilde{\varepsilon}_0}(1)\cap\Oc\mapsto \M(z)^{j-1}\pi^{ss}(z)W(z,j_0,1)$$ 
		on $B_{\widetilde{\varepsilon}_0}(1)$ with a pole at $1$ of order $1$. To do so, we will use equality \eqref{egBW} and Hypothesis \ref{H:spec} to extend meromorphically the function $\pi^{ss}W(\cdot,j_0,1)$ on $B_{\widetilde{\varepsilon}_0}(1)$ with a pole at $1$ of order $1$. For $z\in B_{\widetilde{\varepsilon}_0}(1)\backslash\lc1\rc$, since Hypothesis \ref{H:spec} implies that $\Delta(z)\neq0$ where $\Delta$ is defined by \eqref{def:Lopat}, the matrix 
		$$\begin{pmatrix}
			\Bc e_1(z) &\ppp& \Bc e_r(z)
		\end{pmatrix}$$
		is invertible and 
		$$\begin{pmatrix}
			\Bc e_1(z) &\ppp& \Bc e_r(z)
		\end{pmatrix}^{-1} = \frac{\mathrm{com}\begin{pmatrix} \Bc e_1(z) &\ppp& \Bc e_r(z)\end{pmatrix}^T}{\Delta(z)}.$$
		Hypothesis \ref{H:spec} states that $1$ is a simple zero of $\Delta$. Thus, we can consider the holomorphic function $D:B_{\widetilde{\varepsilon}_0}(1)\rightarrow \Mc_r(\C)$ defined by
		\begin{equation}\label{lem:GS_près:def:D}
			\forall z\in B_{\widetilde{\varepsilon}_0}(1),\quad D(z)=\lc\begin{array}{cc} 
				\frac{z-1}{\Delta(z)}\mathrm{com}\begin{pmatrix} \Bc e_1(z) &\ppp& \Bc e_r(z)\end{pmatrix}^T & \quad \text{ if }z\neq 1,\\ 
				\frac{1}{\Delta^\prime(1)}\mathrm{com}\begin{pmatrix} \Bc e_1(1) &\ppp& \Bc e_r(1)\end{pmatrix}^T  &\quad \text{ if }z= 1 .\end{array}\right.
		\end{equation}
		To study the equality \eqref{egBW}, for $z\in B_{\widetilde{\varepsilon}_0}(1)$ and $j_0\in\N\backslash\lc0\rc$, we introduce
		\begin{align}
			\begin{split}
				V^u(z,j_0)&:=-(e_1(z) \ppp e_r(z))D(z)\Bc \left(\displaystyle\frac{1}{a_p}\M(z)^{-j_0}\pi^{su}(z)e\right),\\
				V^c(z,j_0)&:=-(e_1(z) \ppp e_r(z))D(z)\Bc \left(\displaystyle\frac{1}{a_p}\kappa(z)^{-j_0}\pi^{c}(z)e\right).
			\end{split}\label{defV}
		\end{align}
		We observe that, for all $j_0\in\N\backslash\lc0\rc$, the functions $V^u(\cdot,j_0)$ and $V^c(\cdot,j_0)$ are holomorphic on $B_{\widetilde{\varepsilon}_0}(1)$ and for all $z\in B_{\widetilde{\varepsilon}_0}(1)$, $V^u(z,j_0)$ and $V^c(z,j_0)$ belong to $E^{ss}(z)$ since they are linear combinations of $e_1(z),\ppp,e_r(z)$. The equality \eqref{egBW} and the definition \eqref{defV} also imply
		$$\forall j_0\in\N\backslash\lc0\rc, \forall z\in B_{\widetilde{\varepsilon}_0}(1)\cap\Oc, \quad \Bc\pi^{ss}(z)W(z,j_0,1) = \Bc\frac{V^c(z,j_0)+V^u(z,j_0)}{z-1}.$$
		Furthermore, since $\Delta(z)\neq0$ for $z\in B_{\widetilde{\varepsilon}_0}(1)\cap\Oc$, Hypothesis \ref{H:spec} implies that 
		$$\ker\Bc\cap E^{ss}(z)=\lc0\rc.$$
		We can then conclude that
		\begin{equation}\label{egSWV}
			\forall j_0\in\N\backslash\lc0\rc, \forall z\in B_{\widetilde{\varepsilon}_0}(1)\cap\Oc, \quad \pi^{ss}(z)W(z,j_0,1) = \frac{V^c(z,j_0)+V^u(z,j_0)}{z-1}.
		\end{equation}
		We have thus found a meromorphic extension of the function $\pi^{ss}W(\cdot,j_0,1)$ on $B_{\widetilde{\varepsilon}_0}(1)$ with a pole at $1$ of order $1$. We are then led to introduce for all $j_0,j\in\N\backslash\lc0\rc$ the functions $\Pcc^{c}(\cdot,j_0,j)$ and $\Pcc^{u}(\cdot,j_0,j)$ defined on $B_{\widetilde{\varepsilon}_0}(1)$ as
		\begin{equation}\label{defPr}
			\forall z\in B_{\widetilde{\varepsilon}_0}(1), \quad \begin{array}{cc} \Pcc^{c}(z,j_0,j)&:= \M(z)^{j-1}V^c(z,j_0), \\ \Pcc^{u}(z,j_0,j)&:= \M(z)^{j-1}V^u(z,j_0). \end{array}
		\end{equation}
		The two functions $\Pcc^{c}(\cdot,j_0,j)$ and $\Pcc^{u}(\cdot,j_0,j)$ are both holomorphic on $B_{\widetilde{\varepsilon}_0}(1)$. Since $\kappa(1)=1$, we notice that $V^c(1,j_0)$ does not depend on $j_0$. We then notate 
		\begin{equation}\label{defRcc}
			\forall j_0,j\in\N\backslash\lc0\rc,\quad \Rcc^c(j):= \Pcc^{c}(1,j_0,j).
		\end{equation}
		We recall that $(X)_p$ denotes the $p$th coefficient of $X\in\C^{p+r}$. We denote for $z\in B_{\widetilde{\varepsilon}_0}(1)$ and $j,j_0\in \N\backslash\lc0\rc$ 
		\begin{equation}\label{defPuPcRc}
			P^c(z,j_0,j) := (\Pcc^c(z,j_0,j))_p,\quad P^u(z,j_0,j) := (\Pcc^u(z,j_0,j))_p, \quad \Rc^c(j) := (\Rcc^c(j))_p.
		\end{equation}		
	
		Using the equalities \eqref{egSWV} as well as the definition \eqref{defPr} of the functions $\Pcc^{c}$ and $\Pcc^{u}$, we have that for all $j_0,j\in\N\backslash\lc0\rc$, we can extend the function 
		$$ z\in\Oc\cap\rho(\Tc)\mapsto \M(z)^{j-1}\pi^{ss}(z)W(z,j_0,1) $$
		meromorphically on $B_{\widetilde{\varepsilon}_0}(1)$ with a pole at $1$ of order $1$ with the expression
		\begin{equation}\label{lem:GS_près:eg_Vect}
			\forall j_0,j\in\N\backslash\lc0\rc,\forall z\in B_{\widetilde{\varepsilon}_0}(1)\backslash\lc1\rc,\quad \M(z)^{j-1}\pi^{ss}(z)W(z,j_0,1)= \frac{\Pcc^{c}(z,j_0,j)}{z-1}+\frac{\Pcc^{u}(z,j_0,j)}{z-1}.
		\end{equation}
		
		Using \eqref{eq:R}, \eqref{defPuPcRc} and \eqref{lem:GS_près:eg_Vect} directly imply \eqref{egDecW}. 
		
		We will now prove the different estimates presented in the statement of Lemma \ref{lem:GS_près}. We start by noticing that there exists a radius $\widetilde{\varepsilon}_1\in]0,\widetilde{\varepsilon}_0[$ such that the families of matrices $(\M(z)^j\pi^{ss}(z))_{j\in\N}$ and $(\M(z)^{-j}\pi^{su}(z))_{j\in\N}$ uniformly decay exponentially for $z\in B_{\widetilde{\varepsilon}_1}(1)$, i.e. there exist two positive constants $C,c$ such that
		\begin{equation}\label{inExpoDec}
			\forall z\in B_{\widetilde{\varepsilon}_1}(1),\forall j\in\N, \quad \begin{array}{cc}\left\|\M(z)^j\pi^{ss}(z)\right\| & \leq Ce^{-cj},\\ \left\|\M(z)^{-j}\pi^{su}(z)\right\| & \leq Ce^{-cj}.\end{array}
		\end{equation}		
		To study the central component $\Pcc^{c}$, we will also need the following lemma:
		\begin{lemma}[\cite{CF2}]\label{lemExpoDec}
			We consider a holomorphic function $M$ with values in $\Mc_N(\C)$ defined on a open ball $B_\delta(0)$ with $\delta>0$ and $N\in\N$ such that there exist two positive constants $C,c$ such that
			$$\forall z\in B_\delta(0),\forall j\in\N,\quad \left\|M(z)^j\right\|\leq C e^{-cj}.$$
			Up to considering a smaller radius $\delta$, there exist two new positive constants $C,c$
			$$\forall z_1,z_2\in B_\delta(0),\forall j\in\N,\quad \left\|M(z_1)^j-M(z_2)^j\right\|\leq C |z_1-z_2|e^{-cj}.$$
		\end{lemma}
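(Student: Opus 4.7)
The plan is to combine the standard telescoping identity for differences of powers with the uniform exponential bound assumed in the hypothesis. First, I would shrink the radius $\delta$ slightly so that the holomorphic function $M$ is Lipschitz on $B_\delta(0)$: since $M$ is holomorphic and bounded on a slightly larger closed ball contained in its domain of definition, Cauchy's estimate gives a uniform bound on the derivative $M'$, hence the existence of a constant $L>0$ with
$$\forall z_1,z_2\in B_\delta(0),\quad \left\|M(z_1)-M(z_2)\right\|\leq L\left|z_1-z_2\right|.$$

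Next I would apply the telescoping identity
$$M(z_1)^j-M(z_2)^j = \sum_{k=0}^{j-1} M(z_1)^{j-1-k}\bigl(M(z_1)-M(z_2)\bigr)M(z_2)^k.$$
Using the assumed uniform exponential decay $\|M(z)^k\|\leq Ce^{-ck}$ on $B_\delta(0)$ to control both factors $M(z_1)^{j-1-k}$ and $M(z_2)^k$, each of the $j$ summands is bounded by $C^2L\left|z_1-z_2\right|e^{-c(j-1)}$, which yields
$$\left\|M(z_1)^j-M(z_2)^j\right\|\leq C^2L\left|z_1-z_2\right|\cdot j\cdot e^{-c(j-1)}.$$

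The only remaining issue is the extraneous polynomial prefactor $j$, which must be absorbed by slightly decreasing the exponential rate: for any $c'\in(0,c)$, one has $je^{-cj}\leq C_{c'}e^{-c'j}$ for some constant $C_{c'}>0$ (a direct consequence of $\log j = o(j)$). Taking, say, $c'=c/2$ gives the claimed estimate with new constants, which the statement already permits since the constants $C$ and $c$ are allowed to be different from those in the hypothesis. I do not expect any genuine obstacle; the only mildly delicate point is this absorption of the polynomial factor, which merely costs a fraction of the exponential decay rate. An alternative, slightly more conceptual approach would be to use the holomorphic functional calculus, writing $M(z)^j = \tfrac{1}{2\pi i}\oint_\gamma \zeta^j (\zeta I - M(z))^{-1}d\zeta$ along a small contour enclosing the spectrum of $M(z)$ (which lies in a disk of radius $e^{-c}<1$ uniformly in $z$ by the Gelfand formula applied to the assumed bound) and then applying the resolvent identity $(\zeta I-M(z_1))^{-1}-(\zeta I-M(z_2))^{-1}=(\zeta I-M(z_1))^{-1}(M(z_1)-M(z_2))(\zeta I-M(z_2))^{-1}$; but the telescoping argument above is more elementary and seems entirely sufficient here.
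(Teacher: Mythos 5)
Your argument is correct and complete. The paper itself does not prove this lemma---it is stated as a citation from \cite{CF2}---so there is no in-text proof to compare against. Your three-step structure (shrink $\delta$ to get a Lipschitz constant $L$ for $M$ via Cauchy's estimate on a slightly smaller closed ball, then telescope $M(z_1)^j-M(z_2)^j=\sum_{k=0}^{j-1}M(z_1)^{j-1-k}(M(z_1)-M(z_2))M(z_2)^k$, then absorb the resulting linear factor $j$ into the exponential by trading $c$ for $c'\in(0,c)$) is exactly the standard route and contains no gaps; the only point worth making explicit is that for $j=0$ the left-hand side vanishes so the bound is trivial, and the telescoping identity is applied for $j\geq 1$. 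Your alternative sketch via the holomorphic functional calculus and the resolvent identity would also work (it is essentially the spectral-projector argument that yields the hypothesis $\left\|M(z)^j\right\|\leq Ce^{-cj}$ in the first place, so it is natural in this context), but as you note the telescoping argument is more elementary and entirely sufficient.
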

		
		Using inequalities \eqref{inExpoDec}, Lemma \ref{lemExpoDec} implies that, up to diminishing $\widetilde{\varepsilon}_1$, there exist some positive constants $C,c$ such that
		\begin{equation}\label{inExpoDecZ}
			\forall z\in B_{\widetilde{\varepsilon}_1}(1),\forall j\in\N, \quad \left\|\M(z)^j\pi^{ss}(z)-\M(1)^j\pi^{ss}(1)\right\|  \leq C|z-1|e^{-cj}.
		\end{equation}
		
		Let us find estimates on $\Pcc^{u}$. First, we observe that inequality \eqref{inExpoDec} implies that there exist two positive constant $C,c$ such that
		\begin{equation}\label{inVu}
			\forall z\in B_{\widetilde{\varepsilon}_1}(1), \forall j_0\in\N\backslash\lc0\rc,\quad |V^u(z,j_0)|\leq Ce^{-cj_0}.
		\end{equation}
		Since for $z\in B_{\widetilde{\varepsilon}_1}(1)$ and $j_0\in\N\backslash\lc0\rc$ we have $V^u(z,j_0)\in E^{ss}(z)$, we notice using the definition \eqref{defPr} that
		$$\forall z\in B_{\widetilde{\varepsilon}_1}(1), \forall j,j_0\in\N\backslash\lc0\rc,\quad \Pcc^{u}(z,j_0,j) = \M(z)^{j-1}\pi^{ss}(z)V^u(z,j_0).$$
		Therefore, using the inequalities \eqref{inExpoDec} and \eqref{inVu}, we can prove that there exist two positive constants $C,c$ such that
		$$\forall z\in B_{\widetilde{\varepsilon}_1}(1), \forall j_0,j\in\N\backslash\lc0\rc,\quad	|\Pcc^{u}(z,j_0,j)|\leq Ce^{-cj-cj_0}.$$
		We easily deduce \eqref{inPruprès}.
		
		We now prove the estimates on $\Pcc^{c}$. We observe that the definition \eqref{defV} implies that there exists a positive constant $C$ such that
		\begin{equation}\label{inVc}
			\forall z\in B_{\widetilde{\varepsilon}_1}(1), \forall j_0\in\N\backslash\lc0\rc,\quad |V^c(z,j_0)|\leq C|\kappa(z)|^{-j_0}.
		\end{equation}
		Since for $z\in B_{\widetilde{\varepsilon}_1}(1)$ and $j_0\in\N\backslash\lc0\rc$ we have $V^c(z,j_0)\in E^{ss}(z)$, we notice using the definition \eqref{defPr} that
		$$\forall z\in B_{\widetilde{\varepsilon}_1}(1), \forall j,j_0\in\N\backslash\lc0\rc,\quad \Pcc^{c}(z,j_0,j) = \M(z)^{j-1}\pi^{ss}(z)V^c(z,j_0).$$
		Therefore, using the inequalities \eqref{inExpoDec}, we prove the existence of a constant $C>0$ such that
		\begin{equation}\label{inPrc1}
			\forall z\in B_{\widetilde{\varepsilon}_1}(1), \forall j_0,j\in\N\backslash\lc0\rc,\quad \begin{array}{c}
				|\Pcc^{c}(z,j_0,j)|\leq Ce^{-cj}|\kappa(z)|^{-j_0},\\
				|\Rcc^{c}(j)|\leq Ce^{-cj}.
			\end{array}
		\end{equation}
		Finally, we observe that for $j_0,j\in\N\backslash\lc0\rc$ and $z\in B_{\widetilde{\varepsilon}_1}(1)$ we have
		\begin{multline*}
			\Pcc^{c}(z,j_0,j)-\kappa(z)^{-j_0}\Rcc^c(j) \\= \kappa(z)^{-j_0}\left(\M(1)^{j-1}\pi^{ss}(1)(e_1(1)\ppp e_r(1))D(1)\Bc\pi^c(1)e-\M(z)^{j-1}\pi^{ss}(z)(e_1(z)\ppp e_r(z))D(z)\Bc\pi^c(z)e\right).
		\end{multline*}		
		Therefore, the estimates \eqref{inExpoDec} and \eqref{inExpoDecZ} as well as the mean value inequality directly imply that there exist two positive constants $C,c$ such that
		$$\forall z\in B_{\widetilde{\varepsilon}_1}(1), \forall j_0,j\in\N\backslash\lc0\rc,\quad |\Pcc^{c}(z,j_0,j)-\kappa(z)^{-j_0}\Rcc^c(j)|\leq C|z-1|e^{-cj}|\kappa(z)|^{-j_0}.$$
		
		To conclude the proof of Lemma \ref{lem:GS_près}, there remains to prove the condition \eqref{lem:GS_près:condRc}. First, we need to compute the value of the vector $\pi^c(1)e$. We recall that $1$ is a simple eigenvalue of the matrix $\M(1)$, that $\begin{pmatrix} 1 & \hdots &1 \end{pmatrix} ^T$ is an eigenvector of $\M(1)$ associated with $1$ and thus that
		$$E^c(1) = \mathrm{Span}\begin{pmatrix} 1 & \hdots &1 \end{pmatrix} ^T.$$
		We also know that there exists a unique eigenvector $L=(l_j)_{j\in\lc1,\ppp,p+r\rc}\in\C^{p+r}$ of $\M(1)^T$ associated with the eigenvalue $1$ such that
		\begin{equation}\label{normalisationL}
			L\cdot \begin{pmatrix} 1 & \hdots &1 \end{pmatrix} ^T=1
		\end{equation}
		where the symmetric bilinear form $\cdot$ on $\C^{p+r}$ is defined as\footnote{Observe that this symetric bilinear form is not the Hermitian product on $\C^{p+r}$.}
		$$\forall X,Y\in \C^{p+r},\quad X\cdot Y := \sum_{l=1}^{p+r} X_iY_i.$$
		Then, we have that 
		$$\forall Y\in \C^{p+r},\quad \pi^c(z)Y = (L\cdot Y)\begin{pmatrix} 1 & \hdots &1 \end{pmatrix} ^T.$$
		Thus, applying to the vector $e$ implies that
		$$\pi^c(1)e= l_1\begin{pmatrix} 1 & \hdots &1 \end{pmatrix} ^T.$$
		Since $L$ is an eigenvector of $\M(1)^T$ associated with the eigenvalue $1$, we have
		\begin{equation}\label{eg:lj}
			\forall j\in\lc1,\ppp,p+r\rc,\quad l_j = \left(\sum_{l=-r}^{p-j} a_l -\delta_{j\leq p}\right)\frac{l_1}{a_p}.
		\end{equation}
		Since $F^\prime(1)=-\alpha$, the normalization \eqref{normalisationL} and the equality \eqref{eg:lj} then imply that
		\begin{equation}\label{eg:Pi^c}
			l_1=\frac{a_p}{\alpha}\quad \text{and thus} \quad \pi^c(1)e= \frac{a_p}{\alpha}\begin{pmatrix} 1 & \hdots &1 \end{pmatrix} ^T.
		\end{equation}
		
		Using the definitions of \eqref{defRcc}, \eqref{defV} and \eqref{lem:GS_près:def:D} respectively of $\Rcc^c(j)$, the function $V^c$ and  the function $D$, we have that for $j\in\N\backslash\lc0\rc$ 
		\begin{align}\label{eg:Rcc}
		\begin{split}
			\Rcc^c(j) &=-\frac{1}{\alpha\Delta^\prime(1)} \M(1)^j (e_1(1) \hdots e_r(1)) \mathrm{com}\begin{pmatrix} \Bc e_1(1) & \hdots &\Bc e_r(1) \end{pmatrix} ^T\Bc \begin{pmatrix} 1 & \hdots &1 \end{pmatrix} ^T\\
			& =  -\frac{1}{\alpha\Delta^\prime(1)}  \M(1)^j \left(\sum_{k=1}^r \det\begin{pmatrix} \Bc e_1(1) & \hdots  &\Bc e_{k-1}(1) &\Bc\begin{pmatrix} 1 & \hdots &1 \end{pmatrix} ^T &\Bc e_{k+1}(1) & \hdots &\Bc e_r(1) \end{pmatrix} e_k(1)\right).
		\end{split}
		\end{align}
			
		$\bullet$ We recall that Hypothesis \ref{H:spec} implies that $1$ is a simple zero of the Lopatinskii determinant. As a consequence, the vector space $\Bc E^s(1)$ is of dimension $r-1$. If $\Bc\begin{pmatrix}
			1& \hdots  &1
		\end{pmatrix}^T\in \Bc E^s(1)$, then for all $k\in\lc1,\ppp,r\rc$
		$$\det\begin{pmatrix} \Bc e_1(1) & \hdots  &\Bc e_{k-1}(1) &\Bc\begin{pmatrix} 1 & \hdots &1 \end{pmatrix} ^T &\Bc e_{k+1}(1) & \hdots &\Bc e_r(1) \end{pmatrix}=0$$
		and thus $\Rcc^c(j)=0$ for all $j\in\N\backslash\lc0\rc$. Using \eqref{defPuPcRc}, we deduce the sequence $(\Rc^c(j))_{j\in\N\backslash\lc0\rc}$ is equal to $0$.
		
		$\bullet$ We now suppose that the sequence $(\Rc^c(j))_{j\in\N\backslash\lc0\rc}$ is equal to $0$. Since the matrix $\M(1)$ is a companion matrix  and $\Rc^c(j)$ is the $p$th coefficient of the vector $\Rcc^c(j)$ for all $j\in\N\backslash\lc0\rc$, the equality \eqref{eg:Rcc} implies that 
		$$\forall j\geq r+1,\quad  \Rcc^c(j) = \begin{pmatrix}
			\Rc^c(j+p-1) \\ \vdots \\ \Rc^c(j-r)
		\end{pmatrix}.$$
		If the sequence $(\Rc^c(j))_{j\in\N\backslash\lc0\rc}$ is equal to $0$, then the vector $\Rcc^c(j)$ are equal to $0$ for all $j\geq r+1$. Since the matrix $\M(1)$ is invertible and the family $(e_k(1))_{k\in\lc1,\ppp,r\rc}$ is linearly independent, the equality \eqref{eg:Rcc} implies that for all $k\in\lc1,\ppp,r\rc$
		$$\det\begin{pmatrix} \Bc e_1(1) & \hdots  &\Bc e_{k-1}(1) &\Bc\begin{pmatrix} 1 & \hdots &1 \end{pmatrix} ^T &\Bc e_{k+1}(1) & \hdots &\Bc e_r(1) \end{pmatrix}=0.$$
		Since $1$ is a simple zero of the Lopatinskii determinant $\Delta$, there exists an integer $k\in\lc1,\ppp,r\rc$ such that the family $(\Bc e_j(1))_{j\in\lc1,\ppp,r\rc\backslash\lc k\rc}$ is linearly independent and spans the whole vector space $\Bc E^s(1)$. Therefore, the vector $\Bc\begin{pmatrix} 1 & \hdots &1 \end{pmatrix} ^T$ belongs to $\Bc E^s(1)$.
	\end{proof}

	\section{Study of the temporal Green's function}\label{sec:GT}
	
	This section is dedicated to the proof of Theorem \ref{th:Green}. Our goal is to prove the estimate \eqref{in:Err} on the function $\mathrm{Err}(n,j_0,j)$ defined by \eqref{def:Err} for all $n,j_0,j\in\N\backslash\lc0\rc$. We expect three different behaviors depending on the ratio $j_0/n$ represented on Figure \ref{fig:zones}:
	
	\vspace{0.5cm}
	 \textbf{I-} The case where $j_0$ is large compared to $n$ (i.e. $j_0>np$) will be dealt in Section \ref{subsec:J0_Large}. For those small times $n<\frac{p}{j_0}$, the numerical boundary condition does not have any impact on the computation of the solution $(\Gc(n,j_0,\cdot))_{n\in\N}$ of the numerical scheme \eqref{def:numScheme} with the initial datum $u^0=\delta_{j_0}$. This will allow us to deduce \eqref{in:Err} in this case.
	
	\vspace{0.5cm}
	\textbf{II-} The case where $j_0$ is small compared to $n$ (i.e. $j_0<-\frac{n\alpha}{2}$) which will be tackled in Section \ref{subsec:J0_Small}. It corresponds to the case where the generalized Gaussian wave has already passed the boundary and the boundary layers are almost fully activated.
	
	\vspace{0.5cm}
	\textbf{III-} The case where $j_0$ is close to $-n\alpha$ (i.e. $j_0\in\left[-\frac{n\alpha}{2},np\right]$) which will be tackled in Section \ref{subsec:J0_Close}. The bulk of the proof happens in this section since the limiting estimates occur in this case

	\begin{figure}
		\begin{center}
			\begin{tikzpicture}[scale=4]
				
				\draw[color=gray!60] (0,0) grid[step=0.19] (1,1);
				
				\draw[->] (-0.1,0) -- (1,0) node[right] {$j_0$};
				\draw[->] (0,-0.1) -- (0,1) node[above] {$n$};
				
				\draw[dashed] (0,0) -- (1,1) node[above right] {$j_0=-n\alpha$};
				\draw[thick] (0,0) -- (1/2,1) node[above] {$j_0=-\frac{n\alpha}{2}$};
				\draw[thick] (0,0) -- (1,1/2) node[right] {$j_0=np$};
				
				\draw (3/4,1/5) node {\textbf{I}};
				\draw (1/5,3/4) node {\textbf{II}};
				\draw (2/3,4/5) node {\textbf{III}};
			\end{tikzpicture}
			\caption{The different sectors for the cases \textbf{I}, \textbf{II} and \textbf{III}.}
			\label{fig:zones}
		\end{center}
	\end{figure}
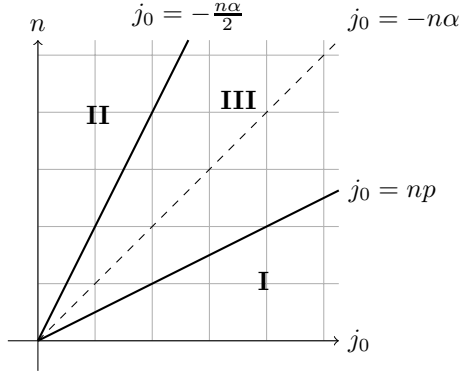

	\vspace{0.5cm}
	
	Before starting the proof of Theorem \ref{th:Green}, we will introduce some useful inequalities on the functions $H_{2\mu}^\beta$ and $E_{2\mu}^\beta$ defined by \eqref{def:H2mu_et_E2mu}.
	\begin{lemma}\label{lemHE}
		There exist two positive constants $C,c$ such that
		\begin{align}
			\forall x\in\R,\quad & |H_{2\mu}^\beta(x)|\leq C\exp(-c|x|^\frac{2\mu}{2\mu-1}),\label{inH}\\
			\forall x\in]0,+\infty[,\quad & |E_{2\mu}^\beta(x)|\leq C\exp(-c|x|^\frac{2\mu}{2\mu-1}),\label{inE+}\\
			\forall x\in]-\infty,0[, \quad& |1-E_{2\mu}^\beta(x)|\leq C\exp(-c|x|^\frac{2\mu}{2\mu-1}).\label{inE-}
		\end{align}
	\end{lemma}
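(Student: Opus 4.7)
The plan is to prove the three estimates in two stages: first establish \eqref{inH} by a saddle point / contour deformation argument applied to the defining Fourier integral of $H_{2\mu}^\beta$, and then deduce \eqref{inE+} and \eqref{inE-} by integrating the tail bound from \eqref{inH}.

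For \eqref{inH}, I would exploit the fact that $u \mapsto e^{ixu - \beta u^{2\mu}}$ is entire and decays super-polynomially as $|\Re u| \to \infty$ uniformly in any horizontal strip (since $\Re(\beta(v+i\eta)^{2\mu}) = \Re(\beta) v^{2\mu} + O(v^{2\mu-1})$ for bounded $\eta$). By Cauchy's theorem the integral over $\R$ equals the integral over $\R + i\eta$ for any $\eta \in \R$, and after this shift the integrand acquires the prefactor $e^{-x\eta}$. The optimal choice is $\eta = \delta\,\mathrm{sgn}(x)\,|x|^{1/(2\mu-1)}$ with $\delta > 0$ small enough, producing the decay factor $e^{-\delta|x|^{2\mu/(2\mu-1)}}$ while the remaining $u$-integral contributes at most an algebraic factor in $|x|$ that is absorbed into the exponential by replacing $\delta$ with a slightly smaller $c > 0$. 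An equivalent formulation, which I find cleaner, is to rescale $u = |x|^{1/(2\mu-1)} w$ to obtain $H_{2\mu}^\beta(x) = \frac{|x|^{1/(2\mu-1)}}{2\pi}\int_\R e^{A\phi(w)}\,dw$ with $A := |x|^{2\mu/(2\mu-1)}$ and $\phi(w) := i\,\mathrm{sgn}(x) w - \beta w^{2\mu}$, then apply the steepest descent method. The saddle equation $\phi'(w_*) = 0$ gives $w_*^{2\mu-1} = i\,\mathrm{sgn}(x)/(2\mu\beta)$, hence $\phi(w_*) = \frac{2\mu-1}{2\mu}\,i\,\mathrm{sgn}(x)\,w_*$; a direct computation using $\Re(\beta) > 0$ and $\mu \geq 1$ shows that the branch can be chosen so that $\Re(\phi(w_*)) = -c_0$ for some $c_0 > 0$ depending only on $\mu$ and $\beta$, which yields the claimed bound.

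For \eqref{inE+}, with $x > 0$, the triangle inequality and \eqref{inH} give
\[
|E_{2\mu}^\beta(x)| \leq \int_x^{+\infty} |H_{2\mu}^\beta(y)|\,dy \leq C\int_x^{+\infty} e^{-cy^{2\mu/(2\mu-1)}}\,dy.
\]
Since $y \mapsto y^{2\mu/(2\mu-1)}$ is convex on $\R_+$, writing $y^{2\mu/(2\mu-1)} \geq x^{2\mu/(2\mu-1)} + \frac{2\mu}{2\mu-1}\,x^{1/(2\mu-1)}(y-x)$ bounds the integrand by $e^{-cx^{2\mu/(2\mu-1)}}\,e^{-c'x^{1/(2\mu-1)}(y-x)}$ and yields a bound of order $x^{-1/(2\mu-1)}e^{-cx^{2\mu/(2\mu-1)}}$ for $x \geq 1$ (and the trivial bound $\|E_{2\mu}^\beta\|_\infty$ for $x \in (0,1)$), the polynomial prefactor being absorbed by replacing $c$ with a slightly smaller constant. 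For \eqref{inE-} with $x < 0$, I would use the identity $1 - E_{2\mu}^\beta(x) = \int_{-\infty}^{x} H_{2\mu}^\beta(y)\,dy$ coming from \eqref{eq:E_en_-infty}, and apply the symmetric argument, integrating on $(-\infty, x)$ against the same bound from \eqref{inH}.

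The main obstacle is the first step, specifically the verification that the real part of the phase at the saddle has the correct negative sign: one must check that, for integer $\mu \geq 1$ and $\beta$ with $\Re(\beta) > 0$, the argument of $(i/(2\mu\beta))^{1/(2\mu-1)}$ lies in a sector which, after multiplication by $i\,\mathrm{sgn}(x)$, lands in the open left half-plane. This amounts to the elementary angular inequality $\pi\mu/(2\mu-1) - \pi/(2(2\mu-1)) > \pi/2$, which holds with equality only in the degenerate limit. Once this is secured, the contour shift and the subsequent tail integrations are routine.
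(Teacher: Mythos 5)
Your approach matches the paper's framework: the paper cites \cite[Lemma 9]{Coeuret} and \cite[Proposition 5.3]{Rob} for \eqref{inH}, and deduces \eqref{inE+} and \eqref{inE-} by integrating the tail of $H_{2\mu}^\beta$ and using \eqref{eq:E_en_-infty}, which is exactly what you do; your treatment of the tails, including the identity $1-E_{2\mu}^\beta(x)=\int_{-\infty}^x H_{2\mu}^\beta$, is correct. Your self-contained contour-shift proof of \eqref{inH} is a legitimate substitute for the cited references, and both of your formulations (contour shift and rescaled steepest descent) lead to the bound. Two statements in it, however, are imprecise and worth correcting.

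First, after the shift to $\R+i\eta$ with $\eta=\delta\,\mathrm{sgn}(x)|x|^{1/(2\mu-1)}$, the remaining $u$-integral does not contribute "at most an algebraic factor": expanding and using Young's inequality on the cross terms gives $\Re\big(\beta(u+i\eta)^{2\mu}\big)\geq\tfrac12\Re(\beta)u^{2\mu}-C|\eta|^{2\mu}$, so the remaining integral carries an exponential factor $e^{C\delta^{2\mu}|x|^{2\mu/(2\mu-1)}}$. The argument closes because for $\delta$ small enough $C\delta^{2\mu}<\delta/2$, but your phrasing obscures why the smallness of $\delta$ is essential rather than a cosmetic normalization. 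Second, the "elementary angular inequality" you single out as the main obstacle, $\tfrac{\pi\mu}{2\mu-1}-\tfrac{\pi}{2(2\mu-1)}>\tfrac{\pi}{2}$, is in fact an identity: the left side simplifies to $\tfrac{\pi(2\mu-1)}{2(2\mu-1)}=\tfrac{\pi}{2}$, so the strict inequality as written is false, and your remark that it "holds with equality only in the degenerate limit" contradicts it. The true source of strictness is that $\Re(\beta)>0$ forces $\arg\beta\in(-\tfrac{\pi}{2},\tfrac{\pi}{2})$ strictly; then $\arg\big(w_*^{2\mu-1}\big)=\mathrm{sgn}(x)\tfrac{\pi}{2}-\arg\beta$ lies strictly in $\mathrm{sgn}(x)\cdot(0,\pi)$, hence $\arg(w_*)$ lies strictly in $\mathrm{sgn}(x)\cdot\big(0,\tfrac{\pi}{2\mu-1}\big)$, and $\Re(\phi(w_*))=-\tfrac{2\mu-1}{2\mu}\,\mathrm{sgn}(x)\,\Im(w_*)<0$. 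The constant $c_0$ does degenerate as $\arg\beta\to\pm\tfrac{\pi}{2}$, but that regime is excluded by Hypothesis \ref{H:scheme}. With these two points repaired, the proof is sound.
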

	
	The interested reader can find a proof of \eqref{inH} in \cite[Lemma 9]{Coeuret} or in \cite[Proposition 5.3]{Rob} for a more general point of view. Inequalities \eqref{inE+} and \eqref{inE-} for the function $E_{2\mu}^\beta$ are directly deduced by integrating the function $H_{2\mu}^\beta$ and using \eqref{eq:E_en_-infty} and \eqref{inH}.
	
	\subsection{Case \textbf{I}: $j_0$ is large compared to $n$}\label{subsec:J0_Large}
	
	We consider $n,j_0\in \N\backslash\lc0\rc$ such that $j_0>np$ and we aim to prove the estimate \eqref{in:Err} on $\mathrm{Err}(n,j_0,j)$ in this case for all $j\in\N\backslash\lc0\rc$. First, we can prove that 
	\begin{equation}\label{eg:Gccj0large}
		\forall j\in \N\backslash\lc0\rc, \quad \Gc(n,j_0,j)=\widetilde{\Gc}(n,j-j_0).
	\end{equation}
	This equality translates the fact that for an initial condition $u^0=\delta_{j_0}$, the solution $\Gc(n,j_0,j)$ of the numerical scheme \eqref{def:numScheme} does not see the boundary condition for sufficiently small times $n$.
	
	Using the definition \eqref{def:Err} of $\mathrm{Err}(n,j_0,j)$ and the equality \eqref{eg:Gccj0large}, we have that
	$$\mathrm{Err}(n,j_0,j)=E_{2\mu}^\beta\left(\frac{j_0+n\alpha}{n^\frac{1}{2\mu}}\right)\Rc^c(j).$$
	Using \eqref{inPrcprès} to exponentially bound $\Rc^c(j)$, we observe that there just remains to prove generalized Gaussian estimates on $E_{2\mu}^\beta\left(\frac{j_0+n\alpha}{n^\frac{1}{2\mu}}\right)$ to conclude. Besides, since we have
	$$\frac{j_0+n\alpha}{n^\frac{1}{2\mu}}\geq (p+\alpha)n^\frac{2\mu-1}{2\mu}>0$$
	and the function $x\in[p+\alpha,+\infty[\mapsto x^\frac{1}{2\mu-1}\exp\left(-\frac{c}{2}x^\frac{2\mu}{2\mu-1}\right)$ is bounded where $c$ is the positive constant in \eqref{inE+}, we conclude using \eqref{inE+} that there exists a positive constant $\tilde{C}$ which verify for all $n,j_0\in \N\backslash\lc0\rc$ such that $j_0>np$, we have
	\begin{equation}\label{inAc}
		\left|E_{2\mu}^\beta\left(\frac{j_0+n\alpha}{n^\frac{1}{2\mu}}\right)\right|\leq \frac{\tilde{C}}{n^\frac{1}{2\mu}}\exp\left(-\frac{c}{2}\left(\frac{|j_0+n\alpha|}{n^\frac{1}{2\mu}}\right)^\frac{2\mu}{2\mu-1}\right).
	\end{equation}
	This concludes the proof of \eqref{in:Err} when $j_0>np$.
	
	\subsection{Inverse Laplace transform}\label{subsecInvLapl}
	
	To prove \eqref{in:Err} when $j_0\leq np$, we will use a representation of the temporal Green's functions $\Gc$ and $\widetilde{\Gc}$ using the spatial Green's functions we defined in Section \ref{sec:GS}. Considering a path that surrounds the spectra $\sigma(\Tc)$ and $\sigma(\Lcc)$, for instance $\widetilde{\Gamma}_{r_0}=\exp(r_0)\S^1$ with $r_0\in]0,+\infty[$, the inverse Laplace transform implies that 
	\begin{align}
		\forall n,j_0,j\in\N\backslash\lc0\rc, \quad &\Gc(n,j_0,j) = \frac{1}{2i\pi}\int_{\widetilde{\Gamma}_{r_0}}z^nG(z,j_0,j)dz,\label{egGTGSToep}\\
		\forall n\in\N\backslash\lc0\rc,\forall j\in\Z, \quad &\widetilde{\Gc}(n,j) = \frac{1}{2i\pi}\int_{\widetilde{\Gamma}_{r_0}}z^n\widetilde{G}(z,j)dz,\label{egGTGSLaur}
	\end{align}
	where the spatial Green's functions $G$ and $\widetilde{G}$ are defined by \eqref{def:GS_Toep} and \eqref{def:GS_Laur}. Using the definition of the function $R$ given by \eqref{def:R} and the equalities \eqref{egGTGSToep} and \eqref{egGTGSLaur}, we then obtain that
	\begin{equation}
		\forall n,j_0,j\in\N\backslash\lc0\rc, \quad \Gc(n,j_0,j) -\widetilde{\Gc}(n,j-j_0)= \frac{1}{2i\pi}\int_{\widetilde{\Gamma}_{r_0}}z^nR(z,j_0,j)dz.\label{egPrinc}
	\end{equation}
	Our goal will be to deform the path $\widetilde{\Gamma}_{r_0}$ to use optimally the estimates we determined in Section \ref{sec:GS} on the function $R(z,j_0,j)$ while being aware that this function has a pole of order $1$ at $z=1$. We use the change of variables $z=\exp(\tau)$. If we define the path $\Gamma_{r_0}:=\lc r_0+it, t\in[-\pi,\pi]\rc$ represented on Figure \ref{figGamma} and $\Rg(\tau,j_0,j):=e^\tau R(e^\tau,j_0,j)$, we then have
	\begin{equation}\label{egGTGSred}
		\forall n,j_0,j\in\N\backslash\lc0\rc, \quad \Gc(n,j_0,j) -\widetilde{\Gc}(n,j-j_0) = \frac{1}{2i\pi}\int_{\Gamma_{r_0}}e^{n\tau}\Rg(\tau,j_0,j)d\tau.
	\end{equation}
	
	We recall that in Lemma \ref{lem:GS_près}, we present a precise description of the function $R$ in a neighborhood $B_{\widetilde{\varepsilon}_1}(1)$ of $1$. We fix a radius $\varepsilon^\star_0\in]0,\pi[$ such that
	$$\forall \tau\in B_{\varepsilon^\star_0}(0),\quad e^\tau\in B_{\widetilde{\varepsilon}_1}(1)$$
	and such that there exists a holomorphic function $\varpi:B_{\varepsilon^\star_0}(0)\rightarrow \C$ which verifies $\varpi(0)=0$ and
	$$\forall \tau\in B_{\varepsilon^\star_0}(0),\quad \exp(\varpi(\tau))=\kappa(e^\tau).$$
	We recall that $\kappa(z)$ is the eigenvalue of the matrix $\M(z)$ such that $\kappa(1)=1$ and which depends holomorphically on $z$. We observe that Lemma \ref{lem:SpecSpl} implies that 
	\begin{equation}\label{egFvarpi}
		\forall \tau\in B_{\varepsilon^\star_0}(0),\quad F(e^{\varpi(\tau)}) = e^\tau.
	\end{equation}
	If we define the holomorphic function $\varphi$ such that
	\begin{equation}\label{defPhi}
		\forall \tau\in \C, \quad \varphi(\tau):= -\frac{\tau}{\alpha} +(-1)^{\mu+1}\frac{\beta}{\alpha^{2\mu+1}}\tau^{2\mu}
	\end{equation}
	where $\alpha$ and $\beta$ are defined in Hypothesis \ref{H:scheme}, then using the equality \eqref{egFvarpi} and the asymptotic expansion \eqref{eq:devAsympF} of the logarithm of $F$, we end up proving that there exists a holomorphic function $\xi:B_{\varepsilon^\star_0}(0)\rightarrow \C$ such that 
	\begin{equation}\label{decVarpi}
		\forall \tau\in B_{\varepsilon^\star_0}(0), \quad \varpi(\tau)= \varphi(\tau) +\xi(\tau)\tau^{2\mu+1}.
	\end{equation}

	For all $j_0,j\in\N\backslash\lc0\rc$, we then define the holomorphic functions
	\begin{equation}\label{defPg}
		\forall \tau\in B_{\varepsilon^\star_0}(0),\quad \Pg^{c}(\tau,j_0,j):= \frac{\tau}{e^\tau-1}e^\tau P^{c}(e^\tau,j_0,j) \quad \text{and} \quad \Pg^{u}(\tau,j_0,j):=\frac{\tau}{e^\tau-1}e^\tau P^{u}(e^\tau,j_0,j).
	\end{equation}
	Using Lemma \ref{lem:GS_près}, we can prove that for all $j_0,j\in\N\backslash\lc0\rc$, the function $\Rg(\cdot,j_0,j)$ can be meromorphically extended on $B_{\varepsilon^\star_0}(0)$ with a pole of order $1$ at $0$ and that it satisfies the equality
	\begin{equation}\label{egRgPg}
		\forall \tau\in B_{\varepsilon^\star_0}(0)\backslash\lc0\rc, \quad \Rg(\tau,j_0,j) = \frac{\Pg^{c}(\tau,j_0,j)}{\tau}+\frac{\Pg^{u}(\tau,j_0,j)}{\tau}.
	\end{equation}
	
	We will now prove a lemma to pass from estimates on the function $R(z,j_0,j)$ in Lemmas \ref{lem:GS_loin} and \ref{lem:GS_près} to estimates on the function $\Rg(\tau,j_0,j)$.
	
	\begin{lemma}\label{lem:Rg_Mero_and_esti}
		There exist two positive constants $C,c$ such that
		\begin{align}
			\forall \tau \in B_{\varepsilon^\star_0}(0),\forall j_0,j\in\N\backslash\lc0\rc, \quad &\left|\Pg^{u}(\tau,j_0,j)\right|\leq Ce^{-cj_0-cj},\label{inPgruprès}\\
			\forall \tau \in B_{\varepsilon^\star_0}(0),\forall j_0,j\in\N\backslash\lc0\rc, \quad &\left|\Pg^{c}(\tau,j_0,j)-\Rc^c(j)e^{-j_0\varpi(\tau)}\right|\leq C|\tau|e^{-cj}\exp\left(-j_0\Re(\varpi(\tau))\right).\label{inPgrcprès}
		\end{align}
		
		Furthermore, for all $\varepsilon\in]0,\varepsilon^\star_0[$, there exists a width $\eta_\varepsilon\in]0,\varepsilon[$ such that if we define
		$$\Omega_\varepsilon := \lc\tau\in\C,\Re(\tau)\in]-\eta_\varepsilon,\pi],\Im(\tau)\in[-\pi,\pi]\rc \backslash B_\varepsilon(0)$$
		then for all $j_0,j\in\N\backslash\lc0\rc$ the function $\tau \mapsto \Rg(\tau,j_0,j)$ is holomorphically defined on $\Omega_\varepsilon$ and there exist two positive constants $C_\varepsilon,c_\varepsilon>0$ such that
		\begin{equation}
			\forall \tau \in \Omega_\varepsilon,\forall j_0,j\in\N\backslash\lc0\rc, \quad \left|\Rg(\tau,j_0,j)\right|\leq C_\varepsilon e^{-c_\varepsilon j_0-c_\varepsilon j}.\label{inRgloin}
		\end{equation}		
	\end{lemma}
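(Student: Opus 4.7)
The plan is to split Lemma \ref{lem:Rg_Mero_and_esti} into two logically independent parts. The pointwise estimates \eqref{inPgruprès} and \eqref{inPgrcprès} near $\tau=0$ follow essentially by plugging the definition \eqref{defPg} into the bounds already proved in Lemma \ref{lem:GS_près}, while the holomorphic extension and uniform bound \eqref{inRgloin} on $\Omega_\varepsilon$ will follow from Lemma \ref{lem:GS_loin} together with a compactness argument. The dissipativity condition of Hypothesis \ref{H:scheme} plays a crucial role for the second part, since it guarantees $F(\S^1)\cap \S^1=\lc1\rc$ and hence $\overline{\Uc}\backslash\lc1\rc\subset\Oc$, so that $\overline{\Uc}\backslash\lc1\rc\subset \Oc\cap\rho(\Tc)$ by \eqref{resolvTc}.

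For \eqref{inPgruprès}, I would observe that the function $\tau\mapsto \tau e^\tau/(e^\tau-1)$ extends holomorphically at $0$ with value $1$, and is therefore bounded on $B_{\varepsilon^\star_0}(0)$. Multiplying this bound by \eqref{inPruprès} yields \eqref{inPgruprès} directly.

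For \eqref{inPgrcprès}, the identity $\kappa(e^\tau)=e^{\varpi(\tau)}$ yields $e^{-j_0\varpi(\tau)}=\kappa(e^\tau)^{-j_0}$ and $|\kappa(e^\tau)|^{-j_0}=e^{-j_0\Re(\varpi(\tau))}$. I would then write the algebraic split
\begin{align*}
\Pg^{c}(\tau,j_0,j)-\Rc^c(j)e^{-j_0\varpi(\tau)} &= \frac{\tau e^\tau}{e^\tau-1}\bigl[P^{c}(e^\tau,j_0,j) - \Rc^c(j)\kappa(e^\tau)^{-j_0}\bigr] \\
&\quad + \Rc^c(j)\kappa(e^\tau)^{-j_0}\left(\frac{\tau e^\tau}{e^\tau-1}-1\right).
\end{align*}
The first bracket is controlled by the third inequality in \eqref{inPrcprès}, producing a factor $|e^\tau-1|\lesssim |\tau|$ together with the required exponential decay in $j$ and $j_0$. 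The second contribution uses the decay $|\Rc^c(j)|\leq Ce^{-cj}$ from the second inequality of \eqref{inPrcprès}, combined with the elementary expansion $\tau e^\tau/(e^\tau-1)-1=\tau/2+O(\tau^2)$, which gives an additional $|\tau|$. Both terms thus satisfy the desired bound.

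The main obstacle lies in \eqref{inRgloin}. I would introduce the compact set
\[
K_\varepsilon := \lc\tau\in\C,\;\Re(\tau)\in[0,\pi],\;\Im(\tau)\in[-\pi,\pi]\rc \setminus B_\varepsilon(0),
\]
and notice that for every $\tau_0\in K_\varepsilon$ the point $e^{\tau_0}$ lies in $\overline{\Uc}\backslash\lc1\rc\subset \Oc\cap\rho(\Tc)$. Lemma \ref{lem:GS_loin} then supplies a radius $\delta(\tau_0)>0$ and constants $C_{\tau_0},c_{\tau_0}>0$ such that $B_{\delta(\tau_0)}(e^{\tau_0})\subset \Oc\cap\rho(\Tc)$ and the function $R(\cdot,j_0,j)$ is holomorphic there with exponential bound $Ce^{-c(j+j_0)}$. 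Pulling this back through the local biholomorphism $\tau\mapsto e^\tau$ (allowed since $e^{\tau_0}\neq 0$), I obtain an open neighborhood $U_{\tau_0}$ of $\tau_0$ on which $\Rg(\cdot,j_0,j)$ is holomorphic with an analogous bound. By compactness of $K_\varepsilon$ I extract a finite subcover $U_{\tau_1},\ppp,U_{\tau_N}$; taking the maximum of the finitely many constants delivers uniform $C_\varepsilon,c_\varepsilon$, and a Lebesgue-number argument produces a width $\eta_\varepsilon\in]0,\varepsilon[$ such that $\Omega_\varepsilon$ is contained in the union of these pullbacks. The bookkeeping point to check carefully is that the leftward extension by $\eta_\varepsilon$ stays inside $\Oc\cap\rho(\Tc)$, which uses continuity of $\Oc$ and of the resolvent set together with the compactness of $K_\varepsilon$.
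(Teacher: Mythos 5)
Your proposal is correct and follows essentially the same route as the paper: the near-$0$ estimates are obtained by pulling the bounds of Lemma \ref{lem:GS_près} through the definition \eqref{defPg}, using the boundedness and first-order behavior of $\tau\mapsto \tau e^\tau/(e^\tau-1)$, and the far estimate on $\Omega_\varepsilon$ is obtained by combining Lemma \ref{lem:GS_loin} with a compactness argument over the set $\lc\Re(\tau)\in[0,\pi],\,\Im(\tau)\in[-\pi,\pi]\rc\setminus B_\varepsilon(0)$. Your algebraic split for \eqref{inPgrcprès} differs only cosmetically from the paper's triangular-inequality decomposition; both rest on the same three bounds in \eqref{inPrcprès} plus the mean value inequality.
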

	
	\begin{proof}
		Inequality \eqref{inPgruprès} is a direct consequence from \eqref{inPruprès}. We also observe that the triangular inequality implies
		\begin{multline*}
			\forall \tau\in B_{\varepsilon_\star}(0) , \forall j_0,j\in\N\backslash\lc0\rc, \\ |\Pg^{c}(\tau,j_0,j)-\Rc^c(j)e^{-j_0\varpi(\tau)}|\leq \left|\frac{\tau}{e^\tau-1}e^\tau-1\right||P^{c}(e^\tau,j_0,j)|  + |P^{c}(e^\tau,j_0,j)-\Rc^c(j)\kappa(e^\tau)^{-j_0}|.
		\end{multline*}  
		Therefore, \eqref{inPgrcprès} is a direct consequence from \eqref{inPrcprès} and the mean value inequality.
		
		We now consider $\varepsilon\in]0,\varepsilon^\star_0[$. The set 
		$$U_\varepsilon:=  \lc\tau\in\C,\Re(\tau)\in[0,\pi],\Im(\tau)\in[-\pi,\pi]\rc \backslash B_\varepsilon(0)$$
		is compact. Furthermore, for all $\tau_0 \in U_\varepsilon$, since $e^{\tau_0}\in\overline{\Uc}\backslash\lc1\rc$, we have thanks to Lemma \ref{lem:GS_loin} the existence of a radius $\delta>0$ and two positive constants $C,c$ such that $\tau\mapsto \Rg(\tau,j_0,j)$ is holomorphically defined on $B_\delta(\tau_0)$ and 
		$$\forall \tau\in B_\delta(\tau_0), \forall j_0,j\in\N\backslash\lc0\rc, \quad \left|\Rg(\tau,j_0,j)\right| \leq Ce^{-c(j+j_0)}.$$
		Using a compactness argument, we find a width $\eta_\varepsilon\in]0,\varepsilon[$ such that for all $j_0,j\in\N\backslash\lc0\rc$ the function $\tau \mapsto \Rg(\tau,j_0,j)$ is holomorphically defined on $\Omega_\varepsilon$ and there exist two positive constants $C,c>0$ such that \eqref{inRgloin} is verified.
	\end{proof}
	
	The following lemma gives us bounds on the real part of the functions $\varpi$ and $\varphi$ that will be useful later on, for instance when using \eqref{inPgrcprès}.
	
	\begin{lemma}\label{lem:BorneVarphiVarpi}
		There exist a radius $\varepsilon^\star_1\in]0,\varepsilon^\star_0[$ and two positive constants $A_R,A_I$ such that 
		\begin{align}
			\forall \tau\in \C,\quad & \alpha \Re(\varphi(\tau)) \leq -\Re(\tau)+A_R\Re(\tau)^{2\mu}-A_I\Im(\tau)^{2\mu},\label{estVarphi}\\
			\forall \tau\in B_{\varepsilon^\star_1}(0),\quad &\alpha \Re(\varpi(\tau)) +|\alpha| |\xi(\tau)\tau^{2\mu+1}|\leq -\Re(\tau)+A_R\Re(\tau)^{2\mu}-A_I\Im(\tau)^{2\mu}.\label{estVarpiReste}
		\end{align}
	\end{lemma}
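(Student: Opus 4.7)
The plan is to bound $\alpha\Re(\varphi(\tau))$ by a direct computation from the definition \eqref{defPhi}. Multiplying by $\alpha$ cancels one power of $\alpha$ in the denominator of the quadratic term: $\alpha\varphi(\tau) = -\tau + ((-1)^{\mu+1}\beta/\alpha^{2\mu})\tau^{2\mu}$. Since $\alpha<0$ but $2\mu$ is even, $\alpha^{2\mu} > 0$, and Hypothesis \ref{H:scheme} gives $\Re(\beta) > 0$, so the constant $A_I^\star := \Re(\beta)/\alpha^{2\mu}$ is strictly positive. Writing $\tau = s+it$ with $s,t \in \R$, the linear term contributes $-s = -\Re(\tau)$, which is exactly the first term on the right-hand side of \eqref{estVarphi}. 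The whole difficulty is therefore in controlling the $\tau^{2\mu}$ contribution.

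The cleanest way to proceed is to first verify the ``anchor'' case $s=0$: there $\tau^{2\mu} = (it)^{2\mu} = (-1)^\mu t^{2\mu}$ is real, so the $\tau^{2\mu}$-contribution to $\Re(\alpha\varphi(it))$ equals $(-1)^{\mu+1}\Re(\beta)(-1)^\mu t^{2\mu}/\alpha^{2\mu} = -A_I^\star t^{2\mu}$, which already has the correct sign. For general $\tau = s+it$, I expand $\tau^{2\mu}$ by the binomial formula, so that its real and imaginary parts split as $s^{2\mu} + (-1)^\mu t^{2\mu}$ plus finitely many ``mixed'' monomials $s^a t^b$ with $a+b = 2\mu$ and $a,b \geq 1$. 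Each such mixed term is controlled by weighted Young's inequality: for every $\delta > 0$, $|s|^a|t|^b \leq \delta|t|^{2\mu} + C_\delta|s|^{2\mu}$. Choosing $\delta$ small enough that the aggregate coefficient of $t^{2\mu}$ remains $\leq -A_I^\star/2$ after absorbing all (finitely many) mixed terms yields \eqref{estVarphi} with $A_I := A_I^\star/2$ and $A_R$ the accumulated $|s|^{2\mu}$ coefficients. Since the arguments used only Young's inequality and are polynomial in $s,t$, the resulting estimate is global in $\tau \in \C$.

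For \eqref{estVarpiReste}, I use the decomposition \eqref{decVarpi}: $\varpi(\tau) = \varphi(\tau) + \xi(\tau)\tau^{2\mu+1}$ on $B_{\varepsilon^\star_0}(0)$. Holomorphy of $\xi$ provides a constant $M>0$ with $|\xi(\tau)| \leq M$ on $B_{\varepsilon^\star_0/2}(0)$. Then $|\alpha||\xi(\tau)\tau^{2\mu+1}| \leq |\alpha|M|\tau|\cdot|\tau|^{2\mu}$, and using $|\tau|^{2\mu} = (s^2+t^2)^\mu \leq 2^\mu(|s|^{2\mu}+|t|^{2\mu})$, this perturbation is bounded by $|\alpha|M2^\mu|\tau|(s^{2\mu}+t^{2\mu})$. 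Picking $\varepsilon^\star_1 \in \,]0,\varepsilon^\star_0/2[$ small enough so that $|\alpha|M2^\mu\varepsilon^\star_1 < A_I/2$ absorbs the perturbation into the existing $-A_I t^{2\mu} + A_R s^{2\mu}$ gap coming from \eqref{estVarphi}, yielding \eqref{estVarpiReste} with slightly degraded but still strictly positive constants.

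The only real obstacle is the bookkeeping in the Young's inequality step of the middle paragraph: one must check that the coefficient of $t^{2\mu}$ remains strictly negative after absorbing both the mixed binomial terms and the $\xi(\tau)\tau^{2\mu+1}$ correction. Because the number of mixed terms is bounded by $2\mu-1$ and depends only on $\mu$, this is a finite, mechanical verification; no further ideas are required.
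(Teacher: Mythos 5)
Your proposal is correct and follows essentially the same path as the paper's proof: expand the $\tau^{2\mu}$ term of $\varphi$, isolate the leading $\Im(\tau)^{2\mu}$ contribution coming from $\Re(\beta)>0$, absorb the mixed binomial monomials by weighted Young's inequality, and then treat the $\xi(\tau)\tau^{2\mu+1}$ remainder as a small perturbation on a ball where $\xi$ is bounded. The only cosmetic difference is the explicit bound $|\tau|^{2\mu}\leq 2^\mu(|\Re\tau|^{2\mu}+|\Im\tau|^{2\mu})$ versus the paper's unnamed constants $c_1,c_2$.
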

	
	\begin{proof}
		We start with the proof of \eqref{estVarphi}. Because of Young's inequality, for $l\in\lc1,\ppp,2\mu-1\rc$, we have that for all $\delta>0$, there exists a constant $C_\delta>0$ such that for all $\tau \in \C$ 
		$$|\Re(\tau)|^l|\Im(\tau)|^{2\mu-l}\leq \delta \Im(\tau)^{2\mu} + C_\delta \Re(\tau)^{2\mu}.$$
		Furthermore, we have that 
		$$\alpha\Re(\varphi(\tau))  =-\Re(\tau) + (-1)^{\mu+1}\left(\frac{\Re(\beta)}{\alpha^{2\mu}}\Re(\tau^{2\mu}) - \frac{\Im(\beta)}{\alpha^{2\mu}}\Im(\tau^{2\mu})\right).$$
		Then, for $\delta>0$, there exists $C_\delta>0$ such that 
		\begin{equation}\label{in:Interm_lem:BorneVarphiVarpi}
			\alpha\Re(\varphi(\tau))  \leq -\Re(\tau) +\Re(\tau)^{2\mu} \left(\frac{\Re(\beta)}{\alpha^{2\mu}}+C_\delta\right)+ \Im(\tau)^{2\mu} \left(-\frac{\Re(\beta)}{\alpha^{2\mu}}+\delta\right).
		\end{equation}	
		Therefore, by taking $\delta$ small enough, we can end the proof of inequality \eqref{estVarphi}. 
		
		We will now prove inequality \eqref{estVarpiReste}. Using \eqref{decVarpi}, we have for $\tau\in B_{\varepsilon_\star}(0)$
		\begin{equation}\label{in:Interm2_lem:BorneVarphiVarpi}
			\alpha\Re(\varpi(\tau)) +|\alpha||\xi(\tau)\tau^{2\mu+1}| \leq \alpha\Re(\varphi(\tau))+2|\alpha||\xi(\tau)\tau^{2\mu+1}| .
		\end{equation}
		If we fix a radius $\varepsilon\in]0,\varepsilon_0^\star[$, the function $\xi$ is bounded by some constant $\tilde{C}>0$ on $B_\varepsilon(0)$.	Furthermore, we know there exist two constants $c_1,c_2>0$ such that 
		$$\forall \tau \in \C,\quad |\tau|^{2\mu}\leq c_1\Re(\tau)^{2\mu} + c_2 \Im(\tau)^{2\mu}.$$
		Thus, using \eqref{in:Interm_lem:BorneVarphiVarpi} and \eqref{in:Interm2_lem:BorneVarphiVarpi}, for all radii $\varepsilon_1^\star\in]0,\varepsilon]$ and for all $\delta >0$, there exists a constant $C_\delta>0$ such that
		\begin{multline*}
			\forall \tau\in B_{\varepsilon_1^\star}(1),\quad \alpha\Re(\varpi(\tau)) +|\alpha||\xi(\tau)\tau^{2\mu+1}|
			\\ \leq -\Re(\tau) +\Re(\tau)^{2\mu} \left(\frac{\Re(\beta)}{\alpha^{2\mu}}+C_\delta+2\alpha c_1\tilde{C}\varepsilon_1^\star\right)+ \Im(\tau)^{2\mu} \left(-\frac{\Re(\beta)}{\alpha^{2\mu}}+\delta+2\alpha c_2\tilde{C}\varepsilon_1^\star\right).
		\end{multline*}
		Taking $\delta$ and $\varepsilon^\star_1$ small enough allows us to prove \eqref{estVarpiReste}. 
	\end{proof}
	
	\textbf{Choice of the radius $\varepsilon$ and of the width $\eta$}
	\vspace{0.5cm}
	
	We will now introduce a radius $\varepsilon>0$ and a width $\eta>0$ which will satisfy a list of conditions. Those conditions will be used throughout the proof and are centralized here in order to fix the notations.
	
	First, we fix a choice of radius $\varepsilon\in\left]0,\min\left(\varepsilon_1^\star,\left(\frac{1}{2\mu A_R}\right)^\frac{1}{2\mu-1}\right)\right[$ where the radius $\varepsilon_1^\star$ is defined in Lemma \ref{lem:BorneVarphiVarpi}. This choice for $\varepsilon$ will allow us to use the results of Lemmas \ref{lem:Rg_Mero_and_esti} and \ref{lem:BorneVarphiVarpi}. Furthermore, if we introduce the function 
	\begin{equation}\label{defPsi}
		\begin{array}{cccc}
			\Psi: & \R & \rightarrow & \R \\ & \tau_p & \mapsto & \tau_p-A_R {\tau_p}^{2\mu}
		\end{array}
	\end{equation}
	which we will use to define a family of parameterized curve in Section \ref{subsec:Int_path}, then the function $\Psi$ is continuous and strictly increasing on $\left]-\infty,\varepsilon\right]$.
	
	We now introduce the function
	\begin{equation}\label{defreps}
		\begin{array}{cccc}
			r_\varepsilon: & ]0,\varepsilon[& \rightarrow & \R \\
			& \eta & \mapsto & \sqrt{\varepsilon^2-\eta^2}
		\end{array}
	\end{equation}
	which serves to define the extremities of the curve $-\eta+i\R\cap B_\varepsilon(0)$. We recall that the width $\eta_\varepsilon$ is defined in Lemma \ref{lem:Rg_Mero_and_esti}. We claim that there exists a width $\eta\in]0,\eta_\varepsilon[$ that we fix for the rest of the paper such that:
	\begin{itemize}
		\item The following inequality is satisfied:
		\begin{equation}\label{condEta}
			\frac{\eta}{2}>A_R\eta^{2\mu}.
		\end{equation}
		\item There exists a radius $\varepsilon_\#\in]0,\varepsilon[$ such that if we define 
		\begin{equation}\label{defIextr}
			l_{extr}:= \left(\frac{\Psi(\varepsilon_\#)-\Psi(-\eta)}{A_I}\right)^\frac{1}{2\mu},
		\end{equation}
		then $-\eta+il_{extr}\in B_{\varepsilon}(0)$.
		\item For all $n,j_0\in\N^*$ which verify $-\frac{n\alpha}{2}\leq j_0\leq np$, we have
		\begin{equation}\label{condEta2}
			\left(-\frac{n\alpha}{j_0}-1\right)(-\eta)+A_R(-\eta)^{2\mu}\leq \frac{A_I}{2}r_\varepsilon(\eta)^{2\mu}.
		\end{equation}
	\end{itemize}

	We introduce the paths $\Gamma_{in}$, $\Gamma_{out}$, $\Gamma$ and $\Gamma_{\eta,in}$ that are represented on Figure \ref{figGamma} and are defined as
	\begin{align*}
		\Gamma_{out}& := [-\eta-i\pi,-\eta-ir_\varepsilon(\eta)]\cup [-\eta+ir_\varepsilon(\eta),-\eta+i\pi], \\
		\Gamma_{in}& := \left[-\eta-ir_\varepsilon(\eta),\frac{\varepsilon}{2}\right]\cup \left[\frac{\varepsilon}{2},-\eta+ir_\varepsilon(\eta)\right],\\
		\Gamma& := \Gamma_{in}\cup \Gamma_{out},\\
		\Gamma_{\eta,in}& := \left[-\eta-ir_\varepsilon(\eta),-\eta+ir_\varepsilon(\eta)\right].
	\end{align*}
	
	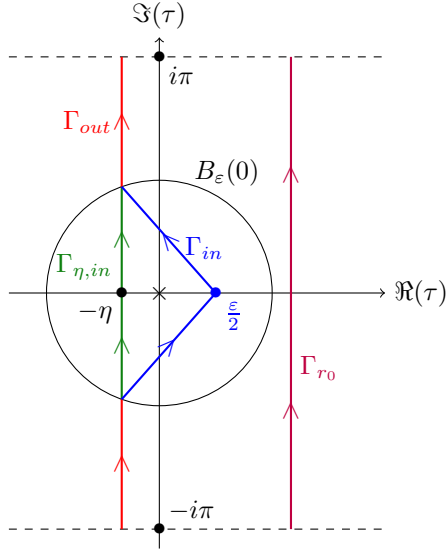
\begin{figure}
		\begin{center}
			\begin{tikzpicture}[scale=1]
				\draw[->] (-2,0) -- (3,0) node[right] {$\Re(\tau)$};
				\draw[->] (0,-3.4) -- (0,3.4) node[above] {$\Im(\tau)$};
				\draw[dashed] (-2,pi) -- (3,pi);
				\draw[dashed] (-2,-pi) -- (3,-pi);
				\draw[dartmouthgreen,thick] (-0.5,{-sqrt(1.5^2-0.5^2)}) -- (-0.5,{sqrt(1.5^2-0.5^2)}) node[near start, sloped] {$>$} node[near end, sloped] {$>$} ;
				\draw[dartmouthgreen] (-0.5,0.35) node[left] {$\Gamma_{\eta,in}$};
				\draw (0,pi) node {$\bullet$} node[below right] {$i\pi$};
				\draw (0,-pi) node {$\bullet$} node[above right] {$-i\pi$};
				\draw (-0.5,0) node {$\bullet$} node[below left] {$-\eta$};				
				\draw (0,0) node {$\times$} circle (1.5);
				\draw (0.9,1.6)  node {$B_\varepsilon(0)$};
				\draw[red,thick] (-0.5,-pi) -- (-0.5,{-sqrt(1.5^2-0.5^2)}) node[midway, sloped] {$>$};
				\draw[red,thick] (-0.5,{sqrt(1.5^2-0.5^2)}) -- (-0.5,pi) node[midway, left] {$\Gamma_{out}$} node[midway, sloped] {$>$};
				\draw[blue] (0.75,0) node {$\bullet$} node[below right] {$\frac{\varepsilon}{2}$};
				\draw[thick,blue] (-0.5,{-sqrt(1.5^2-0.5^2)}) -- (0.75,0) node[midway, sloped] {$>$};
				\draw[thick,blue] (0.75,0) -- (-0.5,{sqrt(1.5^2-0.5^2)})  node[midway, sloped] {$<$};
				\draw[blue] (0.6,0.6) node {$\Gamma_{in}$};
				\draw[thick, purple] (1.75,-pi)-- (1.75,pi) node[near start, sloped] {$>$} node[near end, sloped] {$>$} ;
				\draw[purple] (1.75,-1) node[right] {$\Gamma_{r_0}$} ;
			\end{tikzpicture}
			\caption{A representation of the path $\Gamma_{r_0}$ (in purple), $\Gamma_{in}$ (in blue), $\Gamma_{out}$ (in red), $\Gamma:=\Gamma_{out}\cup \Gamma_{in}$ and $\Gamma_{\eta,in}$ (in green)}
			\label{figGamma}
		\end{center}
	\end{figure}
	
	We observe that those paths lie in $\Omega_\varepsilon\cup B_{\varepsilon}(0)$. Noticing the "$2i\pi$-periodicity" of the function $\Rg(\cdot,j_0,j)$, Cauchy's formula implies that equality \eqref{egGTGSred} can be rewritten as 
	\begin{align}\label{egGTGSred2}
		\begin{split}\forall n\in\N, \forall j_0,j\in\N\backslash\lc0\rc, \quad \Gc(n,j_0,j) -\widetilde{\Gc}(n,j-j_0) & = \frac{1}{2i\pi}\int_{\Gamma}e^{n\tau} \Rg(\tau,j_0,j)d\tau\\
			&= \frac{1}{2i\pi}(T^{out}+T^u+T^c)\end{split}
	\end{align}
	where
	$$T^{out} :=\int_{\Gamma_{out}}e^{n\tau}\Rg(\tau,j_0,j)d\tau,\quad T^u :=\int_{\Gamma_{in}}e^{n\tau}\frac{\Pg^u(\tau,j_0,j)}{\tau}d\tau,\quad  T^c :=\int_{\Gamma_{in}}e^{n\tau}\frac{\Pg^c(\tau,j_0,j)}{\tau}d\tau.$$	
	
	Thus, to prove \eqref{in:Err} when $j_0\leq np$, we need to estimate the terms $T^{out}$, $T^u$ and $T^c$. We start by proving estimates for $T^{out}$ and $T^u$.
	
	\begin{prop}[Estimate on $T^{out}$ and $T^u$]\label{prop:est_Tout_Tu}
		There exist two constants $C,c>0$ such that
		$$\forall n\in\N, \forall j_0,j\in \N\backslash\lc0\rc, \quad |T^{out}|\leq C \exp(-n\eta-c(j+j_0)).$$
		and
		$$\forall n\in\N, \forall j_0,j\in \N\backslash\lc0\rc, \quad |T^{u}-2i\pi\Rc^u(j_0,j)|\leq C \exp(-n\eta-c(j+j_0)).$$
	\end{prop}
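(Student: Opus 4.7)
The plan is to prove both estimates by exploiting the fact that the relevant integration paths lie either on the vertical line $\Re(\tau)=-\eta$ (where $|e^{n\tau}|=e^{-n\eta}$) or in the region $\Omega_\varepsilon$ where $\Rg$ is holomorphic with uniform exponential decay in $j+j_0$ via \eqref{inRgloin}. The $e^{-n\eta}$ factor will come from the real part of $\tau$ along the appropriate vertical segments, the $e^{-c(j+j_0)}$ factor from the estimates of Lemma \ref{lem:Rg_Mero_and_esti}.

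For $T^{out}$, the argument is essentially immediate. The path $\Gamma_{out}$ lies inside $\Omega_\varepsilon$ (since $\eta<\eta_\varepsilon$ and $\Gamma_{out}\cap B_\varepsilon(0)=\emptyset$), so \eqref{inRgloin} gives $|\Rg(\tau,j_0,j)|\leq C_\varepsilon e^{-c_\varepsilon(j+j_0)}$ uniformly on $\Gamma_{out}$. On $\Gamma_{out}$ we have $\Re(\tau)=-\eta$, hence $|e^{n\tau}|=e^{-n\eta}$. Since the length of $\Gamma_{out}$ is bounded by $2\pi$, bounding the integrand times the length gives the desired estimate.

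For $T^u$, the key observation is that $\Pg^u(\cdot,j_0,j)$ is holomorphic on $B_{\varepsilon^\star_0}(0)$, so $\tau\mapsto e^{n\tau}\Pg^u(\tau,j_0,j)/\tau$ has a simple pole at $\tau=0$ with residue $\Pg^u(0,j_0,j)$. Evaluating the definition \eqref{defPg} at $\tau=0$ gives $\Pg^u(0,j_0,j)=P^u(1,j_0,j)=\Rc^u(j_0,j)$ by \eqref{defRu}. The contour $\Gamma_{in}$ passes to the right of $0$ while $\Gamma_{\eta,in}$ passes to the left of $0$ (both sharing the same endpoints $-\eta\pm ir_\varepsilon(\eta)$), so $\Gamma_{in}-\Gamma_{\eta,in}$ is a closed contour around $0$ oriented counterclockwise, lying entirely inside $B_\varepsilon(0)\subset B_{\varepsilon^\star_0}(0)$ where the integrand is meromorphic. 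The residue theorem then yields
\begin{equation*}
T^u-2i\pi\,\Rc^u(j_0,j)=\int_{\Gamma_{\eta,in}}e^{n\tau}\frac{\Pg^u(\tau,j_0,j)}{\tau}\,d\tau.
\end{equation*}

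To conclude, I use that on $\Gamma_{\eta,in}$ we have $\Re(\tau)=-\eta$ and $|\tau|\geq \eta$, so $|e^{n\tau}|/|\tau|\leq e^{-n\eta}/\eta$. The bound \eqref{inPgruprès} then gives $|\Pg^u(\tau,j_0,j)|\leq C e^{-c(j+j_0)}$ on $\Gamma_{\eta,in}$, and since the length of $\Gamma_{\eta,in}$ is $2r_\varepsilon(\eta)\leq 2\varepsilon$, integration yields the claimed bound. No step here is genuinely subtle — the main care needed is to verify the orientation of the contour $\Gamma_{in}-\Gamma_{\eta,in}$ and the identification of the residue at $0$ with $\Rc^u(j_0,j)$; the remainder is a routine length-times-sup estimate using the already-established exponential bounds. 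These two estimates are the easy companions to the delicate analysis of $T^c$, which will carry the bulk of the generalized Gaussian behavior.
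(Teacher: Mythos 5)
Your proof is correct and follows essentially the same route as the paper: use \eqref{inRgloin} and the fact that $\Re(\tau)=-\eta$ on $\Gamma_{out}$ for the first estimate, and for $T^u$ apply the residue theorem to replace $\Gamma_{in}$ by $\Gamma_{\eta,in}$ at the cost of $2i\pi\,\Rc^u(j_0,j)$, then bound the remaining integral with \eqref{inPgruprès}. The only thing you spell out in more detail than the paper is the identification $\Pg^u(0,j_0,j)=P^u(1,j_0,j)=\Rc^u(j_0,j)$ via \eqref{defPg} and \eqref{defRu} and the orientation check, which the paper leaves implicit.
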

	
	\begin{proof}
		We consider $n\in \N$ and $j_0,j\in \N\backslash\lc0\rc$. 
		
		$\bullet$ Since $\Gamma_{out}$ lies within $\Omega_\varepsilon$, using \eqref{inRgloin}, there exists a positive constant $c$ such that
		$$|T^{out}|\lesssim \int_{\Gamma_{out}}\exp(-n\eta)\exp(-cj-cj_0)|d\tau|\lesssim \exp(-n\eta)\exp(-cj-cj_0).$$
		
		$\bullet$ Using the residue theorem and the definition \eqref{defRu} of $\Rc^u(j_0,j)$, we have that
		$$T^{u} = \int_{\Gamma_{in}}e^{n\tau}\frac{\Pg^u(\tau,j_0,j)}{\tau}d\tau =2i\pi\Rc^u(j_0,j)+\int_{\Gamma_{\eta,in}}e^{n\tau}\frac{\Pg^u(\tau,j_0,j)}{\tau}d\tau.$$
		Thus, using \eqref{inPgruprès}, there exists a constant $c>0$ such that 
		$$|T^u-2i\pi\Rc^u(j_0,j)|\leq \int_{\Gamma_{\eta,in}}e^{n\Re(\tau)}\frac{|\Pg^u(\tau,j_0,j)|}{|\tau|}|d\tau|\lesssim \exp(-n\eta-c(j+j_0)).$$
	\end{proof}
	
	Let us observe that the exponential estimates on the terms $T^{out}$ and $T^u$ we just proved can be altered to recover similar generalized Gaussian estimates as in \eqref{in:Err} since there exists a constant $c>0$ such that for all $n,j_0\in\N\backslash\lc0\rc$ which verify $j_0\leq np$, we have 
	$$-n\leq -c\left(\frac{|j_0+n\alpha|}{n^\frac{1}{2\mu}}\right)^\frac{2\mu}{2\mu-1}.$$
	The same kind of exponential bounds will be encountered regularly in the rest of the proof and the same reasoning will allow us to obtain generalized Gaussian estimates.
	
	Now that have found estimates for the two terms $T^{out}$ and $T^u$, there just remains to study the term $T^c$. Section \ref{subsec:J0_Small} will be dedicated to proving estimates on $T^c$ in the case \textbf{II} when $j_0$ small with regard to $-n\alpha$. Finally, Section \ref{subsec:J0_Close} will tackle the study of the term $T^c$ in the case \textbf{III} when $j_0$ is close to $-n\alpha$. 
	
	\subsection{Case \textbf{II}: Estimate for $T^c$ for $j_0$ small with regard to $-n\alpha$}\label{subsec:J0_Small}
	
	The main goal of this section is to prove the following proposition.
	\begin{prop}\label{prop:est_Tc_J0_Small}
		There exist two positive constants $C,c$ such that for all $n\in\N$ and $j,j_0\in\N\backslash\lc0\rc$ such that $j_0<-\frac{n\alpha}{2}$, we have
		$$\left|T^c-2i\pi E_{2\mu}^\beta\left(\frac{j_0+n\alpha}{n^\frac{1}{2\mu}}\right)\Rc^c(j)\right|\leq\frac{Ce^{-cj}}{n^\frac{1}{2\mu}}\exp\left(-c\left(\frac{|n\alpha+j_0|}{n^\frac{1}{2\mu}}\right)^\frac{2\mu}{2\mu-1}\right).$$
	\end{prop}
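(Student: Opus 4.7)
The plan is to shift the integration contour $\Gamma_{in}$ onto the vertical segment $\Gamma_{\eta,in}$ at the cost of a residue at $\tau=0$, which will produce the leading $2i\pi\Rc^c(j)$ contribution, and then to observe that in Case \textbf{II} the generalized error function $E_{2\mu}^\beta\bigl((j_0+n\alpha)/n^{1/(2\mu)}\bigr)$ is exponentially close to $1$ so that it can be freely inserted in front of the residue. Since $\Pg^c(\cdot,j_0,j)$ is holomorphic on $B_{\varepsilon^\star_0}(0)$ and $\Gamma_{in}\cup(-\Gamma_{\eta,in})$ is a simple counterclockwise loop in this disk enclosing $0$, Cauchy's theorem applied to the meromorphic integrand $\tau\mapsto e^{n\tau}\Pg^c(\tau,j_0,j)/\tau$, whose only pole inside the loop is simple at $\tau=0$ with residue $\Pg^c(0,j_0,j)$, yields
\begin{equation*}
T^c = 2i\pi\,\Pg^c(0,j_0,j) + \int_{\Gamma_{\eta,in}} e^{n\tau}\frac{\Pg^c(\tau,j_0,j)}{\tau}\,d\tau.
\end{equation*}
Setting $\tau=0$ in \eqref{inPgrcprès} and using $\varpi(0)=0$ will identify $\Pg^c(0,j_0,j) = \Rc^c(j)$, since the right-hand side of \eqref{inPgrcprès} vanishes at $\tau=0$.

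Next, I would estimate the integral over $\Gamma_{\eta,in}$. On this segment $\Re(\tau)=-\eta$ and $|\tau|\geq\eta$, and \eqref{inPrcprès} gives $|\Pg^c(\tau,j_0,j)|\leq Ce^{-cj}e^{-j_0\Re(\varpi(\tau))}$. Lemma~\ref{lem:BorneVarphiVarpi} then provides
\begin{equation*}
n\Re(\tau) - j_0\Re(\varpi(\tau)) \leq -n\eta + \tfrac{j_0}{-\alpha}\bigl(\eta+A_R\eta^{2\mu}\bigr) - \tfrac{j_0 A_I}{-\alpha}\Im(\tau)^{2\mu}.
\end{equation*}
The Case \textbf{II} assumption $j_0<-n\alpha/2$ gives $j_0/(-\alpha)<n/2$, and combined with condition~\eqref{condEta} ($A_R\eta^{2\mu}<\eta/2$), the first two terms collapse to at most $-n\eta/4$. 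Integrating the Gaussian tail in $\Im(\tau)$ (which contributes a factor $\lesssim j_0^{-1/(2\mu)}\leq 1$) will yield
\begin{equation*}
\left|\int_{\Gamma_{\eta,in}} e^{n\tau}\frac{\Pg^c(\tau,j_0,j)}{\tau}\,d\tau\right| \leq C e^{-cj}e^{-n\eta/4}.
\end{equation*}

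Finally, to match the statement, I would use \eqref{inE-} (valid since $(j_0+n\alpha)/n^{1/(2\mu)}<0$ in Case \textbf{II}) together with $|\Rc^c(j)|\leq Ce^{-cj}$ from \eqref{inPrcprès} to bound the difference between $2i\pi\Rc^c(j)$ and $2i\pi E_{2\mu}^\beta\bigl((j_0+n\alpha)/n^{1/(2\mu)}\bigr)\Rc^c(j)$ by $Ce^{-cj}\exp\!\bigl(-c\bigl|(n\alpha+j_0)/n^{1/(2\mu)}\bigr|^{2\mu/(2\mu-1)}\bigr)$. Both this error and the $e^{-cj}e^{-n\eta/4}$ term then fit into the target bound, since in Case \textbf{II} the quantity $|(n\alpha+j_0)/n^{1/(2\mu)}|^{2\mu/(2\mu-1)}$ is comparable to $n$ (bounded above and below by positive multiples of $n$), so exponential decay in $n$ and in this quantity are equivalent up to adjusting $c$, and any missing polynomial prefactor $n^{-1/(2\mu)}$ is absorbed into the exponential. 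The main point requiring care will be the juggling of numerical constants so that the Case \textbf{II} assumption combined with \eqref{condEta} yields the strict $-n\eta/4$ decay on $\Gamma_{\eta,in}$; once this is secured, the remainder of the argument is routine.
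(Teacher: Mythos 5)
Your proposal is correct and takes essentially the same route as the paper: both shift the contour from $\Gamma_{in}$ to $\Gamma_{\eta,in}$ at the cost of the residue $2i\pi\Rc^c(j)$ at $\tau=0$, bound the remaining integral using \eqref{estVarpiReste} together with the Case \textbf{II} hypothesis and the condition \eqref{condEta} on $\eta$, and finally replace $2i\pi\Rc^c(j)$ by $2i\pi E_{2\mu}^\beta\bigl((j_0+n\alpha)/n^{1/2\mu}\bigr)\Rc^c(j)$ via \eqref{inE-}, absorbing the $n^{-1/2\mu}$ prefactor because $\bigl(|n\alpha+j_0|/n^{1/2\mu}\bigr)^{2\mu/(2\mu-1)}\gtrsim n$ in this regime (this is exactly the content of the paper's inequality \eqref{inAc-}). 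The only cosmetic difference is that the paper first splits $T^c=T^c_1+T^c_2$ and locates the pole inside $T^c_2$, whereas you apply the residue theorem once to $e^{n\tau}\Pg^c(\tau,j_0,j)/\tau$ and read off $\Pg^c(0,j_0,j)=\Rc^c(j)$ directly from \eqref{inPgrcprès} evaluated at $\tau=0$.
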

	
	Combining \eqref{egGTGSred2}, Propositions \ref{prop:est_Tout_Tu} and \ref{prop:est_Tc_J0_Small}, we then prove that there exist two positive constants $C,c$ such that for all $n\in\N$ and $j_0,j\in\N\backslash\lc0\rc$ which verify $j_0<-\frac{n\alpha}{2}$:
	\begin{equation}\label{inThSmall}
		|\mathrm{Err}(n,j_0,j)|\leq \frac{Ce^{-cj}}{n^\frac{1}{2\mu}}\exp\left(-c\left(\frac{|n\alpha+j_0|}{n^\frac{1}{2\mu}}\right)^\frac{2\mu}{2\mu-1}\right).
	\end{equation}
	Thus, \eqref{in:Err} is proved when $j_0$ is small compared to $n$ (Case \textbf{II}).
	
	\begin{proof}
		
		\textbf{\underline{Step 1:}} We decompose $T^c$ in two parts:
		\begin{equation}\label{eg:decompoTc}
			T^c=T^c_1+T^c_2
		\end{equation}
		where
		\begin{align*}
			T^c_1  &:=\int_{\Gamma_{in}}e^{n\tau}\frac{\Pg^c(\tau,j_0,j)-\Rc^c(j)\exp(-j_0\varpi(\tau))}{\tau}d\tau, \\
			T^c_2 &:= \Rc^c(j)\int_{\Gamma_{in}}\frac{\exp(n\tau-j_0\varpi(\tau))}{\tau}d\tau.
		\end{align*}
		We will estimate separately both terms in order to prove the existence of two positive constants $C,c$ such that for all $n\in\N$ and $j,j_0\in\N\backslash\lc0\rc$ which verify $j_0<-n\frac{\alpha}{2}$, we have
		\begin{equation}\label{lemTcSmallIn1}
			|T^c-2i\pi \Rc^c(j)|\leq C e^{-c(j+n)}.
		\end{equation}
		
		$\bullet$ Inequality \eqref{inPgrcprès} implies that the function 
		$$\tau \in B_{\varepsilon}(0)\backslash\lc0\rc\mapsto e^{n\tau}\frac{\Pg^c(\tau,j_0,j)-\Rc^c(j)\exp(-j_0\varpi(\tau))}{\tau}$$
		can be holomorphically extended on $B_{\varepsilon}(0)$. Using Cauchy's formula, we then have
		$$T^c_1 =\int_{\Gamma_{\eta,in}}e^{n\tau}\frac{\Pg^c(\tau,j_0,j)-\Rc^c(j)\exp(-j_0\varpi(\tau))}{\tau}d\tau .$$
		Using \eqref{inPgrcprès}, there exist two positive constants $C,c$ such that
		$$|T^c_1| \leq Ce^{-cj} \int_{\Gamma_{\eta,in}}\exp(n\Re(\tau)-j_0\Re(\varpi(\tau)))|d\tau| =Ce^{-cj-n\eta} \int_{\Gamma_{\eta,in}}\exp(-j_0\Re(\varpi(\tau)))|d\tau| .$$
		For $\tau\in\Gamma_{\eta,in}$, using \eqref{estVarpiReste} and the fact that $j_0<-\frac{n\alpha}{2}$, we have that
		\begin{equation}\label{lemTcSmallIn}
			-j_0\Re(\varpi(\tau))\leq \frac{-j_0}{\alpha}\left(-\Re(\tau)+A_R\Re(\tau)^{2\mu}-A_I\Im(\tau)^{2\mu}\right) \leq  \frac{n}{2}(\eta + A_R\eta^{2\mu}).
		\end{equation}
		Thus, there exists a new constant $C>0$ such that
		$$|T^c_1| \leq C\exp\left(-cj-n\left(\frac{\eta}{2}-A_R\eta^{2\mu}\right)\right) .$$
		Therefore, the condition \eqref{condEta} on $\eta$ implies that there exist two positive constants $C,c$ such that 
		\begin{equation}\label{in:propSmall1}
			\forall n\in\N, \forall j_0,j\in\N\backslash\lc0\rc, \quad j_0<-\frac{n\alpha}{2}\Rightarrow |T^c_1|\leq C\exp(-c(j+n)).
		\end{equation}
	
		$\bullet$ Using the residue theorem, we have
		$$T^c_2 -2i\pi\Rc^c(j)=\Rc^c(j)\int_{\Gamma_{\eta,in}}\frac{\exp(n\tau-j_0\varpi(\tau))}{\tau}d\tau.$$
		Therefore, using \eqref{inPrcprès} to exponentially bound $\Rc^c(j)$, there exist two positive constants $C,c$ such that
		$$|T^c_2 -2i\pi\Rc^c(j)| \leq Ce^{-cj} \int_{\Gamma_{\eta,in}}\frac{\exp(n\Re(\tau)-j_0\Re(\varpi(\tau)))}{|\tau|}|d\tau| \leq C\frac{\exp(-cj-n\eta)}{\eta} \int_{\Gamma_{\eta,in}}\exp(-j_0\Re(\varpi(\tau)))|d\tau| .$$
		Thus, using \eqref{lemTcSmallIn}, there exists a new constant $C>0$ independent from $n$, $j_0$ and $j$ such that
		$$|T^c_2 -2i\pi \Rc^c(j)| \leq C\exp\left(-cj-n\left(\frac{\eta}{2}-A_R\eta^{2\mu}\right)\right) .$$
		Therefore, the condition \eqref{condEta} on $\eta$ implies that there exist two positive constants $C,c$ such that 
		\begin{equation}\label{in:propSmall2}
			\forall n\in\N, \forall j_0,j\in\N\backslash\lc0\rc, \quad j_0<-\frac{n\alpha}{2}\Rightarrow |T^c_2-2i\pi\Rc^c(j)|\leq C\exp(-c(j+n)).
		\end{equation}
		Using \eqref{eg:decompoTc}, \eqref{in:propSmall1} and \eqref{in:propSmall2}, we conclude the proof of \eqref{lemTcSmallIn1}.
		
		\textbf{\underline{Step 2:}} Since we have
		$$\frac{j_0+n\alpha}{n^\frac{1}{2\mu}}\leq \frac{\alpha}{2}n^\frac{2\mu-1}{2\mu}<0,$$
		and the function $x\in]-\infty,\frac{\alpha}{2}]\mapsto |x|^\frac{1}{2\mu-1}\exp\left(-\frac{c}{2}|x|^\frac{2\mu}{2\mu-1}\right)$ is bounded where $c$ is the positive constant in \eqref{inE-}, we conclude using \eqref{inE-} that there exist a positive constants $\tilde{C}$ such that for all $n,j_0\in \N\backslash\lc0\rc$ such that $j_0<-\frac{n\alpha}{2}$, we have
		\begin{equation}\label{inAc-}
			\left|1-E_{2\mu}^\beta\left(\frac{j_0+n\alpha}{n^\frac{1}{2\mu}}\right)\right|\leq \frac{\tilde{C}}{n^\frac{1}{2\mu}}\exp\left(-\frac{c}{2}\left(\frac{|j_0+n\alpha|}{n^\frac{1}{2\mu}}\right)^\frac{2\mu}{2\mu-1}\right).
		\end{equation}
		Using \eqref{lemTcSmallIn1}, \eqref{inAc-} and the estimate \eqref{inPrcprès} to exponentially bound $\Rc^c(j)$, we conclude the proof of Proposition \ref{prop:est_Tc_J0_Small}.
	\end{proof}
	
	\subsection{Case \textbf{III}: Estimate for $T^{c}$ for $j_0$ close to $-n\alpha$}\label{subsec:J0_Close}
	
	The goal of this section will be to study what happens when $j_0$ is close to $-n\alpha$ and to prove the following proposition:
	\begin{prop}\label{prop:est_Tc_J0_Close}
		There exist two constants $C,c>0$ such that for all $n,j_0,j\in\N\backslash\lc0\rc$ such that $j_0\in\left[-\frac{n\alpha}{2},np\right]$, we have
		$$\left|T^c - 2i\pi E_{2\mu}^\beta\left(\frac{j_0+n\alpha}{n^\frac{1}{2\mu}}\right)\Rc^c(j)\right|\leq \frac{Ce^{-cj}}{n^\frac{1}{2\mu}}\exp\left(-c\left(\frac{|n\alpha+j_0|}{n^\frac{1}{2\mu}}\right)^\frac{2\mu}{2\mu-1}\right).$$
	\end{prop}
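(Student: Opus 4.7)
I follow the same initial splitting as in the proof of Proposition \ref{prop:est_Tc_J0_Small}, writing $T^c = T^c_1 + T^c_2$ with
\[
T^c_1 := \int_{\Gamma_{in}} e^{n\tau}\,\frac{\Pg^c(\tau,j_0,j) - \Rc^c(j)\,e^{-j_0\varpi(\tau)}}{\tau}\,d\tau, \qquad T^c_2 := \Rc^c(j)\int_{\Gamma_{in}} \frac{e^{n\tau - j_0\varpi(\tau)}}{\tau}\,d\tau.
\]
The essential difference from Case \textbf{II} is that the ratio $j_0/n$ now lies near $-\alpha$, so the linear coefficient $(n\alpha+j_0)/\alpha$ of $\tau$ in the phase is no longer large, and the fixed path $\Gamma_{in}$ cannot be used as is. My plan is to deform $\Gamma_{in}$ through $\Omega_\varepsilon\cup B_\varepsilon(0)$ to a family of saddle-adapted contours $\Gamma^\star_{n,j_0}$ parametrized via the function $\Psi$ from \eqref{defPsi} and indexed by $j_0/n\in[-\alpha/2,p]$. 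The conditions \eqref{condEta}, \eqref{condEta2} together with the definition \eqref{defIextr} are precisely what makes this family stay inside the analyticity domain uniformly, while Lemma \ref{lem:BorneVarphiVarpi} supplies a $2\mu$-th order concave control of the phase on each contour.

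For $T^c_1$, the bound \eqref{inPgrcprès} absorbs the $1/\tau$ singularity and leaves $|T^c_1|\lesssim e^{-cj}\int_{\Gamma^\star_{n,j_0}}\exp(n\Re\tau - j_0\Re\varpi(\tau))\,|d\tau|$, which a standard Laplace-type computation on the saddle-adapted contour bounds by the claimed generalized Gaussian. Before attacking $T^c_2$, I replace $\varpi$ by $\varphi$ in its integrand: the decomposition \eqref{decVarpi} together with $|e^{a}-e^{b}|\leq|a-b|\max(e^{\Re a},e^{\Re b})$ introduces an extra factor $|j_0||\xi(\tau)||\tau|^{2\mu+1}$, and the same Laplace bound along $\Gamma^\star_{n,j_0}$ absorbs this correction into an error of the required order.

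The core identification is then the asymptotic evaluation of $\int_{\Gamma_{in}} e^{n\tau - j_0\varphi(\tau)}/\tau\,d\tau$. Since this integrand has a simple pole of residue $1$ at $\tau=0$ and the loop $\Gamma_{in}\cup(-\Gamma_{\eta,in})$ encloses $0$ counterclockwise, Cauchy's theorem gives
\[
\int_{\Gamma_{in}} \frac{e^{n\tau - j_0\varphi(\tau)}}{\tau}\,d\tau = 2i\pi + \int_{\Gamma_{\eta,in}} \frac{e^{n\tau - j_0\varphi(\tau)}}{\tau}\,d\tau.
\]
After rescaling $\tau = n^{-1/(2\mu)}\sigma$ and deforming the rescaled contour to the imaginary axis (the pole contribution crossed during this shift is already accounted for in the $2i\pi$), the remaining integral is compared with the template
\[
E_{2\mu}^B(Y) \;=\; \tfrac{1}{2} \;-\; \frac{1}{2i\pi}\,\mathrm{PV}\!\int_\R \frac{e^{iYu - Bu^{2\mu}}}{u}\,du,
\]
obtained by differentiating in $Y$, using $\partial_Y E_{2\mu}^B = -H_{2\mu}^B$, and evaluating the limits $E_{2\mu}^B(\pm\infty)$ via contour shifts together with \eqref{eq:E_en_-infty}. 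A linear change of variable $u = s/\alpha$ and the scaling identity $E_{2\mu}^{B/\gamma^{2\mu}}(Y/\gamma) = E_{2\mu}^B(Y)$, together with a first-order Taylor expansion of $\widehat\beta_n := -(j_0/(n\alpha))\beta$ around $\beta$ (valid since $j_0/(n\alpha) + 1 = -y/(n^{(2\mu-1)/(2\mu)}\alpha)$ is small), then produce the main term $2i\pi\,E_{2\mu}^\beta(y)\Rc^c(j)$ up to errors of the claimed size.

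The main obstacle is achieving uniformity in $y = (n\alpha + j_0)/n^{1/(2\mu)}$, which can vary over the entire interval $[\alpha n^{(2\mu-1)/(2\mu)}/2,(p+\alpha)n^{(2\mu-1)/(2\mu)}]$: the saddle of $n\tau - j_0\varphi(\tau)$ lies at distance $\sim ((n\alpha+j_0)/j_0)^{1/(2\mu-1)}$ from $0$ and must remain inside $B_\varepsilon(0)$, and every error term (truncation of the rescaled contour, $\varpi\to\varphi$, $\widehat\beta_n\to\beta$) must decay as $n^{-1/(2\mu)}\exp(-c|y|^{2\mu/(2\mu-1)})$ simultaneously. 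The single family $\Gamma^\star_{n,j_0}$ and the scalar apparatus $(\Psi,\eta,\varepsilon_\#,r_\varepsilon)$ assembled before \eqref{egGTGSred2} are exactly what enable this uniform treatment.
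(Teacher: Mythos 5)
Your decomposition $T^c = T^c_1 + T^c_2$, the treatment of $T^c_1$ via \eqref{inPgrcprès} together with the saddle-adapted contours $\Gamma_{p,in}$, and the $\varpi\to\varphi$ replacement via \eqref{decVarpi} and the Laplace estimate all coincide with the paper's Propositions \ref{prop:est_Tc1} and \ref{lem:est_Tc2} (the paper simply writes the three-fold split \eqref{decompoTc} from the outset instead of peeling off the $\varpi\to\varphi$ error from $T^c_2$ later). The genuinely different step is the evaluation of the principal part $T^c_{princ}=\Rc^c(j)\int_{\Gamma_{in}}e^{n\tau-j_0\varphi(\tau)}\tau^{-1}\,d\tau$: you extract the residue $2i\pi$ by passing to $\Gamma_{\eta,in}$, rescale, shift onto the imaginary axis, and match the resulting principal value against the Fourier formula $E_{2\mu}^B(Y)=\tfrac12-\tfrac{1}{2i\pi}\mathrm{PV}\int_\R e^{iYu-Bu^{2\mu}}u^{-1}\,du$, finishing with a Taylor expansion in the \emph{parameter} $B=\widehat\beta_n\to\beta$. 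The paper instead pushes $\Gamma_{in}$ to a vertical line $\Re\tau=s>0$ (paths $\Gamma_s$, $\Gamma^\pm_{comp}$, $\Gamma^\pm_{s,\infty}$ and condition \eqref{condS}), which side-steps the pole entirely so no principal value ever arises, identifies $\int_{\Gamma_{s,\infty}}$ with $2i\pi E_{2\mu}^\beta\bigl(\tfrac{j_0+n\alpha}{(-j_0/\alpha)^{1/(2\mu)}}\bigr)$ via the shifted-Fourier function $\Fc$ of \eqref{egF}, and Taylor-expands in the \emph{argument} of $E_{2\mu}^\beta$. The two endpoints agree thanks to the scaling identity you invoke, $E_{2\mu}^{\widehat\beta_n}(y)=E_{2\mu}^\beta\bigl(\tfrac{j_0+n\alpha}{(-j_0/\alpha)^{1/(2\mu)}}\bigr)$. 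Your route is closer to classical distributional Fourier analysis; the paper's avoids all half-residues and is lighter on sign bookkeeping.

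On that last point, there is a concrete error in your sketch that you should fix before the detailed estimates: the pole contribution crossed when shifting $\Gamma_{\eta,in}$ (at $\Re\tau=-\eta<0$) onto the imaginary axis is \emph{not} already accounted for in the $2i\pi$ you just extracted. It is an additional $-\pi i$ coming from the clockwise half-residue of a left indentation at $0$. The total $2i\pi-\pi i=\pi i$ is precisely the constant $\tfrac12$ in your template after multiplication by $2i\pi$, so the final identification is right, but the reason you give for it is wrong. Likewise, since $\alpha<0$, the change of variable $u=s/\alpha$ reverses orientation, and $\mathrm{PV}\int f(u)u^{-1}\,du$ picks up a sign under $u\mapsto-u$; you need these two signs to cancel against each other for the $\mathrm{PV}$ term to carry the minus sign required by $2i\pi E_{2\mu}^B(Y)=\pi i-\mathrm{PV}\int e^{iYu-Bu^{2\mu}}u^{-1}\,du$. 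Once these conventions are tracked carefully the route closes, and the remaining tail truncations (the imaginary segment is cut at $\pm i r_\varepsilon(\eta)n^{1/(2\mu)}$ after rescaling) decay like $e^{-cn}$ because the $t^{2\mu}$ term in the exponent has real coefficient bounded away from $0$ uniformly for $j_0\in\bigl[-\tfrac{n\alpha}{2},np\bigr]$.
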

	
	Combining \eqref{egGTGSred2} and Propositions \ref{prop:est_Tout_Tu} and \ref{prop:est_Tc_J0_Close}, we prove that there exist two constants $C,c>0$ such that for $j_0\in\left[-\frac{n\alpha}{2},np\right]$, we have
	\begin{equation}\label{inThMedium}
		\left|\mathrm{Err}(n,j_0,j)\right|\leq \frac{Ce^{-cj}}{n^\frac{1}{2\mu}}\exp\left(-c\left(\frac{|n\alpha+j_0|}{n^\frac{1}{2\mu}}\right)^\frac{2\mu}{2\mu-1}\right).
	\end{equation}
	Consequently, using the result of Section \ref{subsec:J0_Large}, \eqref{inThSmall} and \eqref{inThMedium}, we can conclude the proof of Theorem \ref{th:Green}.
	
	Therefore, there just remains to prove Proposition \ref{prop:est_Tc_J0_Close}. This part of the article requires the finest attention since the limiting estimates of Theorem \ref{th:Green} occur here. To prove Proposition \ref{prop:est_Tc_J0_Close}, we will decompose $T^c$ in three parts:
	\begin{equation}\label{decompoTc}
		T^c=T^c_1+T^c_2+T^c_{princ}
	\end{equation}
	where
	\begin{align}
		T^c_1  &:=\int_{\Gamma_{in}}e^{n\tau}\frac{\Pg^c(\tau,j_0,j)-\Rc^c(j)\exp(-j_0\varpi(\tau))}{\tau}d\tau, \label{defTc1}\\
		T^c_2 & :=\Rc^c(j)\int_{\Gamma_{in}}e^{n\tau}\frac{e^{-j_0\varpi(\tau)}-e^{-j_0\varphi(\tau)}}{\tau}d\tau,\label{defTc2}\\
		T^c_{princ}  &:=\Rc^c(j)\int_{\Gamma_{in}}\frac{e^{n\tau}e^{-j_0\varphi(\tau)}}{\tau}d\tau.	\label{defTcprinc}	
	\end{align}	
	
	We now summarize the method of proof of Proposition \ref{prop:est_Tc_J0_Close}. In Section \ref{subsec:Int_path}, we introduce a family of integration paths $\Gamma_p$ that are fundamental to optimally use the estimates on the function $\Rg(\tau,j_0,j)$ we proved in Section \ref{sec:GS}. We then prove in Section \ref{subsec:est_Tc12} estimates on the terms $T^c_1$ and $T^c_2$ respectively in Propositions \ref{prop:est_Tc1} and \ref{lem:est_Tc2}. Finally, Section \ref{subsec:est_Tcprinc} is dedicated to the analysis of the term $T^c_{princ}$. We will change the integration path in the term
	$$\int_{\Gamma_{in}}\frac{e^{n\tau}e^{-j_0\varphi(\tau)}}{\tau}d\tau$$
	in order to compare it with $E_{2\mu}^\beta\left(\frac{j_0+n\alpha}{n^\frac{1}{2\mu}}\right)$ (see Proposition \ref{prop:est_Tcprinc}).

	\subsubsection{Choice of integration path}\label{subsec:Int_path}
	
	We will now follow a strategy developed in \cite{ZH}, which has also been used in \cite{Godillon,CF,CF2,Coeuret}, and introduce a family of parameterized curves.
	
	We recall that we introduced in \eqref{defPsi} the function $\Psi$ defined by 
	$$\forall \tau_p\in\R, \quad \Psi(\tau_p):= \tau_p-A_R{\tau_p}^{2\mu}.$$
	and that we chose $\varepsilon$ small enough so that the function $\Psi$ is continuous and strictly increasing on $]-\infty,\varepsilon]$. We can therefore introduce for $\tau_p\in[-\eta,\varepsilon]$ the curve $\Gamma_p$ defined by
	$$\Gamma_{p} := \lc \tau\in \C, -\eta\leq \Re(\tau)\leq \tau_p, \quad \Re(\tau) - A_R \Re(\tau)^{2\mu} +  A_I \Im(\tau)^{2\mu}= \Psi(\tau_p)\rc.$$
	It is a symmetric curve with respect to the axis $\R$ which intersects this axis on the point $\tau_p$. If we introduce $\ell_{p}= \left(\frac{\Psi(\tau_p)-\Psi(-\eta)}{A_I}\right)^\frac{1}{2\mu}$, then $-\eta +i\ell_{p}$ and $-\eta -i\ell_{p}$ are the end points of $\Gamma_{p}$. We can also introduce a parametrization of this curve by defining $\gamma_{p}:[-\ell_{p}, \ell_{p}]\rightarrow \C$ such that 
	\begin{equation}\label{param}
		\forall \tau_p\in\left[-\eta,\varepsilon\right], \forall t\in[-\ell_{p},\ell_{p}],\quad \Im(\gamma_{p}(t))=t, \quad \Re(\gamma_{p}(t))=h_{p}(t):=\Psi^{-1}\left(\Psi(\tau_p)-A_It^{2\mu}\right).
	\end{equation}
	
	The above parametrization immediately yields that there exists a constant $C>0$ such that 
	\begin{equation}\label{hp}
		\forall \tau_p \in[-\eta,\varepsilon], \forall t \in[-\ell_{p},\ell_{p}], \quad |h_{p}^\prime(t)|\leq C.
	\end{equation}
	Also, there exists a constant $c_\star>0$ such that 
	\begin{equation}
		\forall \tau_p\in[-\eta,\varepsilon], 	\forall \tau \in \Gamma_{p}, \quad \Re(\tau)-\tau_p\leq -c_\star \Im(\tau)^{2\mu}. \label{ine_Re}
	\end{equation}
	
	We introduce those integration paths $\Gamma_{p}$ because they allow us to use optimally the inequalities \eqref{estVarphi} and \eqref{estVarpiReste}. For example, if we seek to bound $e^{n\tau-j_0\varpi(\tau)}$ for $\tau\in \Gamma_{p}\cap B_{\varepsilon}(0)$, it follows from the equality $\mathrm{sgn}(-j_0)=\mathrm{sgn}(\alpha)$ and the inequalities \eqref{estVarpiReste} and \eqref{ine_Re} that
	\begin{align}
		\begin{split}
			n\Re(\tau)-j_0\Re(\varpi(\tau))& \leq n\Re(\tau)+\frac{j_0}{\alpha} \left(\Re(\tau)-A_R\Re(\tau)^{2\mu}+A_I\Im(\tau)^{2\mu}\right)\\
			& \leq -nc_\star \Im(\tau)^{2\mu}+ \left(\frac{j_0}{\alpha}+n\right)\tau_p -\frac{j_0}{\alpha}A_R\tau_p^{2\mu}.
		\end{split}\label{estClas}
	\end{align}
	Such calculations will happen regularly in the following proof. There remains to make an appropriate choice of $\tau_p$ depending on $n$ and $j_0$ that minimizes the right-hand side of the inequality \eqref{estClas} whilst the paths $\Gamma_{p}$ remain within the ball $B_{\varepsilon}(0)$. We recall that when we fixed our choice of width $\eta$, we defined a radius $\varepsilon_\#\in]0,\varepsilon[$ such that $-\eta+il_{extr}\in B_{\varepsilon}(0)$ where the real number $l_{extr}$ is defined by \eqref{defIextr}. This implies that the curve $\Gamma_{p}$ associated with $\tau_p=\varepsilon_\#$ intersects the axis $-\eta+i\R$ within $B_\varepsilon(0)$. We let 
	$$\zeta=\frac{-j_0-n\alpha}{2\mu n}, \quad \gamma=\frac{-j_0A_R}{n}, \quad \rho\left(\frac{\zeta}{\gamma}\right)=\mathrm{sgn}(\zeta)\left(\frac{|\zeta|}{\gamma}\right)^\frac{1}{2\mu-1}.$$
	Inequality \eqref{estClas} thus becomes
	\begin{equation}\label{estClas2}
		n\Re(\tau)-j_0\Re(\varpi(\tau))\leq -nc_\star \Im(\tau)^{2\mu}-\frac{n}{\alpha}(2\mu\zeta\tau_p-\gamma \tau_p^{2\mu}).
	\end{equation}
	Our limiting estimates will come from the case where $\zeta$ is close to $0$. We observe that the condition $j_0\in\left[-\frac{n\alpha}{2},np\right]$ implies 
	\begin{equation}
		-pA_R\leq \gamma \leq \frac{\alpha}{2}A_R.\label{ineg_gamma}
	\end{equation}
	
	Then, we take 
	$$\tau_p:=\lc\begin{array}{ccc}
		\rho\left(\frac{\zeta}{\gamma}\right), & \text{ if }\rho\left(\frac{\zeta}{\gamma}\right)\in[-\frac{\eta}{2},\varepsilon_\#],& \text{(Case \textbf{A})}\\
		\varepsilon_\#, & \text{ if }\rho\left(\frac{\zeta}{\gamma}\right)>\varepsilon_\#, &\text{(Case \textbf{B})}\\
		-\frac{\eta}{2}, & \text{ if }\rho\left(\frac{\zeta}{\gamma}\right)<-\frac{\eta}{2}.&\text{(Case \textbf{C})}\\
	\end{array}\right.$$
	The case \textbf{A} corresponds to the choice to minimize the right-hand side of \eqref{estClas2} since $\rho\left(\frac{\zeta}{\gamma}\right)$ is the unique real root of the polynomial
	$$\gamma x^{2\mu-1}=\zeta.$$
	The cases \textbf{B} and \textbf{C} allow the path $\Gamma_{p}$ to stay within $B_\varepsilon(0)$.
	
	We now define the paths represented on Figure \ref{chem}:
	\begin{align*}
		\Gamma_{p,res}:=&\lc-\eta +it,\quad t\in[-r_{\varepsilon}(\eta),-\ell_{p}]\cup[\ell_{p},r_{\varepsilon}(\eta)]\rc,\\
		\Gamma_{p,in}:=&\Gamma_{p}\cup\Gamma_{p,res},
	\end{align*}
	where the function $r_\varepsilon$ is defined by \eqref{defreps}.
	
	\begin{figure}
		\begin{center}
			\begin{tikzpicture}[scale=0.8]
				\draw[->] (-2,0) -- (3,0) node[right] {$\Re(\tau)$};
				\draw[->] (0,-3.4) -- (0,3.4) node[above] {$\Im(\tau)$};
				\draw[dashed] (-2,pi) -- (3,pi);
				\draw[dashed] (-2,-pi) -- (3,-pi);
				\draw (0,pi) node {$\bullet$} node[below right] {$i\pi$};
				\draw (0,-pi) node {$\bullet$} node[above right] {$-i\pi$};
				\draw (-0.5,0) node {$\bullet$} node[below left] {$-\eta$};
				\draw (0,0) node {$\times$} circle (1.5);
				\draw (1.5,0) node[above right] {$B_\varepsilon(0)$};
				\draw[blue] (1,0) node {$\bullet$} ;
				\draw[blue] (1.25,-0.25) node {$\tau_p$};
				\draw[dartmouthgreen,thick] (-0.5,{-sqrt(1.5^2-0.5^2)}) -- (-0.5,{-sqrt((3/4+0.5+1/16)/3)});
				\draw[dartmouthgreen,thick] (-0.5,{sqrt((3/4+0.5+1/16)/3)}) -- (-0.5,{sqrt(1.5^2-0.5^2)}) ;
				\draw[dartmouthgreen] (-1.7,{(sqrt(1.5^2-0.5^2)+sqrt((3/4+0.5+1/16)/3))/2}) node {$\Gamma_{p,res}$};
				\draw[dashed,thick] (-0.5,{-sqrt((3/4+0.5+1/16)/3)}) -- (-0.5,{sqrt((3/4+0.5+1/16)/3)});
				\draw[thick,blue] plot [samples = 100, domain={-sqrt((3/4+0.5+1/16)/3)}:{sqrt((3/4+0.5+1/16)/3)}] ({2*(1-sqrt(1/4+3*abs(\x)^2))},\x) ;
				\draw[blue] (0.6,0.6) node {$\Gamma_{p}$};
			\end{tikzpicture}
			\caption{A representation of the path $\Gamma_{p,in}$. It is composed of $\Gamma_{p,res}$ (in green) and $\Gamma_{p}$ (in blue).}
			\label{chem}
		\end{center}
	\end{figure}
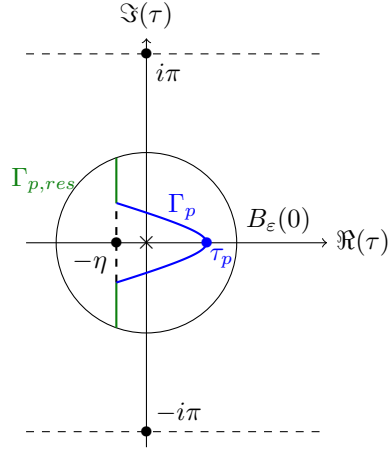
	
	Finally, before we start to determine the estimates on the terms $T^c_1$ and $T^c_2$ in Section \ref{subsec:est_Tc12}, we are going to introduce some inequalities to simplify the redaction.
	
	\begin{lemma}\label{lem:inInterm}
		\begin{itemize}
			\item There exists a constant $C>0$ such that for all $\tau \in B_{\varepsilon}(0)$ and $n,j_0\in\N\backslash\lc0\rc$ such that $j_0\in\left[-\frac{n\alpha}{2},np\right]$, we have 
			\begin{equation}\label{in:DevAsymp}
				\left|e^{n\tau} \left( e^{-j_0\varpi(\tau)} -e^{-j_0\varphi(\tau)}\right)\right| \leq C n|\tau|^{2\mu+1} \exp(n \Re(\tau)-j_0 (\Re(\varpi(\tau)) -|\xi(\tau)(\tau)^{2\mu+1}|)).
			\end{equation}
			\item For $n,j_0\in\N\backslash\lc0\rc$ and $\tau\in \Gamma_{p}$, we have
			
			\begin{itemize}
				\item Case \textbf{A}: $\rho\left(\frac{\zeta}{\gamma}\right) \in \left[-\frac{\eta}{2},\varepsilon_\#\right]$
				\begin{equation}
					n\Re(\tau)-j_0(\Re(\varpi(\tau)) -|\xi(\tau)\tau^{2\mu+1}|) \leq -nc_\star\Im(\tau)^{2\mu} - \frac{n}{\alpha} (2\mu-1)\gamma\left(\frac{|\zeta|}{\gamma}\right)^\frac{2\mu}{2\mu-1}.\label{ineVarpiReste_casA}
				\end{equation}	
				\item Case \textbf{B}: $\rho\left(\frac{\zeta}{\gamma}\right) >\varepsilon_\#$
				\begin{equation}
					n\Re(\tau)-j_0(\Re(\varpi(\tau)) -|\xi(\tau)\tau^{2\mu+1}|) \leq- \frac{n}{2} (2\mu-1)A_R\varepsilon_\#^{2\mu}.\label{ineVarpiReste_casB}
				\end{equation}	
				\item Case \textbf{C}: $\rho\left(\frac{\zeta}{\gamma}\right) <-\frac{\eta}{2}$
				\begin{equation}
					n\Re(\tau)-j_0(\Re(\varpi(\tau)) -|\xi(\tau)\tau^{2\mu+1}|) \leq- \frac{n}{2} (2\mu-1)A_R\left(\frac{\eta}{2}\right)^{2\mu}.\label{ineVarpiReste_casC}
				\end{equation}	
			\end{itemize} 
			
			\item For $n,j_0\in\N\backslash\lc0\rc$ and $\tau\in \Gamma_{p,res}$, we have in all cases (\textbf{A}, \textbf{B} and \textbf{C})
			\begin{equation}
				n\Re(\tau)-j_0(\Re(\varpi(\tau))-|\xi(\tau)\tau^{2\mu+1}|) \leq -n\frac{\eta}{2}.\label{ineVarpiReste_res}
			\end{equation}
		\end{itemize}
	\end{lemma}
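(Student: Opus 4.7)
The plan for Lemma \ref{lem:inInterm} is to establish the four inequalities separately, sharing a common toolkit: the decomposition \eqref{decVarpi} of $\varpi$ into $\varphi$ plus a higher-order remainder, the pointwise bound \eqref{estVarpiReste}, the defining equation of the curve $\Gamma_p$, and the normal estimate \eqref{ine_Re}.

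For \eqref{in:DevAsymp}, I would factor
$$e^{-j_0\varpi(\tau)} - e^{-j_0\varphi(\tau)} = e^{-j_0\varpi(\tau)}\bigl(1 - e^{j_0\xi(\tau)\tau^{2\mu+1}}\bigr)$$
using $\varpi = \varphi + \xi(\tau)\tau^{2\mu+1}$. The elementary inequality $|1 - e^w| \leq |w|\max(1, e^{\Re w})$ applied to $w = j_0\xi(\tau)\tau^{2\mu+1}$, combined with the uniform boundedness of $\xi$ on $B_{\varepsilon^\star_0}(0)$ and the fact that $j_0\leq np\leq Cn$ in the regime of interest, bounds the second factor by $Cn|\tau|^{2\mu+1}\exp(j_0|\xi(\tau)\tau^{2\mu+1}|)$. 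Multiplication by $|e^{n\tau}e^{-j_0\varpi(\tau)}|=\exp(n\Re(\tau)-j_0\Re(\varpi(\tau)))$ then yields the claimed bound directly.

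For the case estimates \eqref{ineVarpiReste_casA}--\eqref{ineVarpiReste_casC}, the starting point is \eqref{estVarpiReste}. Since $\alpha<0$ and $j_0>0$, multiplying by $-j_0/\alpha>0$ preserves the direction and produces
$$-j_0\bigl(\Re(\varpi(\tau)) - |\xi(\tau)\tau^{2\mu+1}|\bigr) \leq \tfrac{j_0}{\alpha}\bigl(\Re(\tau) - A_R\Re(\tau)^{2\mu} + A_I\Im(\tau)^{2\mu}\bigr).$$
On $\Gamma_p$, the bracket equals $\Psi(\tau_p)$ by the very definition of the curve. Adding $n\Re(\tau)$ and using \eqref{ine_Re} to replace $n\Re(\tau)\leq n\tau_p - nc_\star\Im(\tau)^{2\mu}$ leads to the master inequality
$$n\Re(\tau) - j_0\bigl(\Re(\varpi(\tau)) - |\xi(\tau)\tau^{2\mu+1}|\bigr) \leq -nc_\star\Im(\tau)^{2\mu} - \tfrac{n}{\alpha}\bigl(2\mu\zeta\tau_p - \gamma\tau_p^{2\mu}\bigr).$$
In Case \textbf{A} with $\tau_p=\rho(\zeta/\gamma)$, the defining identity $\gamma\tau_p^{2\mu-1}=\zeta$ simplifies $2\mu\zeta\tau_p - \gamma\tau_p^{2\mu}$ to $(2\mu-1)\gamma\tau_p^{2\mu}$, giving \eqref{ineVarpiReste_casA} after re-expressing $\tau_p^{2\mu}$ through $|\zeta|/\gamma$. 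In Cases \textbf{B} and \textbf{C}, where $\tau_p$ is forced to the boundary of the admissible range, strict convexity of $\tau\mapsto 2\mu\zeta\tau-\gamma\tau^{2\mu}$ (valid since $\gamma<0$) combined with monotonicity of $\tau\mapsto\gamma\tau^{2\mu-1}$ still bounds the central quantity above by $(2\mu-1)\gamma\tau_p^{2\mu}$; the conversion of $-n\gamma(2\mu-1)\tau_p^{2\mu}/\alpha$ into the announced $-n(2\mu-1)A_R\tau_p^{2\mu}/2$ then comes from $\gamma=-j_0A_R/n$ together with the bound $-j_0/\alpha\geq n/2$, itself a consequence of $j_0\geq -n\alpha/2$.

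For \eqref{ineVarpiReste_res}, the same multiplicative scheme applies but on $\Gamma_{p,res}$, where $\Re(\tau)=-\eta$ is constant and the $\Im(\tau)^{2\mu}$ contribution is no longer absorbed into $\Psi(\tau_p)$. The analogous calculation produces
$$n\Re(\tau) - j_0\bigl(\Re(\varpi(\tau)) - |\xi(\tau)\tau^{2\mu+1}|\bigr) \leq -n\eta - \tfrac{j_0}{\alpha}(\eta + A_R\eta^{2\mu}) + \tfrac{j_0A_I}{\alpha}\Im(\tau)^{2\mu},$$
and the hypotheses \eqref{condEta} and \eqref{condEta2}, together with the bound $|\Im(\tau)|\leq r_\varepsilon(\eta)$ available on the segment, are calibrated precisely so that the right-hand side is at most $-n\eta/2$ uniformly in all three cases. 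The main obstacle is pure bookkeeping: the repeated sign flips caused by $\alpha<0$, $\gamma<0$ and $-j_0<0$ are easy to mishandle, and the delicate point is checking that \eqref{condEta2} is indeed the sharp algebraic constraint needed to close the estimate on $\Gamma_{p,res}$ in the worst sub-case (Case \textbf{C}, where $\ell_p$ is smallest), without losing any of the accumulated factors.
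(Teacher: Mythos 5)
Your treatment of \eqref{in:DevAsymp} and of the three case estimates \eqref{ineVarpiReste_casA}--\eqref{ineVarpiReste_casC} is correct: the factorization $e^{-j_0\varpi}-e^{-j_0\varphi}=e^{-j_0\varpi}(1-e^{j_0\xi\tau^{2\mu+1}})$ together with $|1-e^w|\leq|w|\max(1,e^{\Re w})$ and $j_0\lesssim n$ gives the first bound, and your ``master inequality'' is exactly \eqref{estClas2}, with the convexity argument and the bound $-j_0/\alpha\geq n/2$ correctly producing Cases \textbf{A}, \textbf{B}, \textbf{C}.

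However, the argument you sketch for \eqref{ineVarpiReste_res} does not close. You assert that on $\Gamma_{p,res}$ the $\Im(\tau)^{2\mu}$ contribution is ``no longer absorbed into $\Psi(\tau_p)$'' and that the estimate follows from $|\Im(\tau)|\leq r_\varepsilon(\eta)$ together with \eqref{condEta} and \eqref{condEta2}. Both claims are problematic. Since $\frac{j_0 A_I}{\alpha}<0$, the term $\frac{j_0 A_I}{\alpha}\Im(\tau)^{2\mu}$ is a decreasing function of $\Im(\tau)^{2\mu}$, so the upper bound $|\Im(\tau)|\leq r_\varepsilon(\eta)$ goes in the wrong direction for the inequality you want; and if you feed \eqref{condEta2} (multiplied by $-j_0/\alpha>0$) into your displayed intermediate bound, you obtain at best
\[
-n\eta-\tfrac{j_0}{\alpha}(\eta+A_R\eta^{2\mu})+\tfrac{j_0A_I}{\alpha}\Im(\tau)^{2\mu}\;\leq\;\tfrac{j_0A_I}{\alpha}\Bigl(\Im(\tau)^{2\mu}-\tfrac{r_\varepsilon(\eta)^{2\mu}}{2}\Bigr),
\]
which can only be shown to be $\leq 0$, not $\leq -n\eta/2$ uniformly. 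In fact \eqref{condEta2} plays no role in this lemma; it is used in Step 1 of the proof of Proposition \ref{prop:est_Tcprinc}.

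What actually makes \eqref{ineVarpiReste_res} work is that the $\Psi$-absorption \emph{does} persist on $\Gamma_{p,res}$, via the opposite bound on $\Im(\tau)$. On $\Gamma_{p,res}$ one has $\Re(\tau)=-\eta$ and $|\Im(\tau)|\geq\ell_p$, hence $-\eta-A_R\eta^{2\mu}+A_I\Im(\tau)^{2\mu}\geq\Psi(-\eta)+A_I\ell_p^{2\mu}=\Psi(\tau_p)$, so multiplying by $\frac{j_0}{\alpha}<0$ gives $n\Re(\tau)-j_0(\Re(\varpi(\tau))-|\xi(\tau)\tau^{2\mu+1}|)\leq -n\eta+\frac{j_0}{\alpha}\Psi(\tau_p)$. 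Writing $\frac{j_0}{\alpha}\Psi(\tau_p)=-n\tau_p-\frac{n}{\alpha}(2\mu\zeta\tau_p-\gamma\tau_p^{2\mu})$ and noting that $\tau_p\geq -\eta/2$ in all three cases (by the very definition of $\tau_p$) and that $2\mu\zeta\tau_p-\gamma\tau_p^{2\mu}\leq(2\mu-1)\gamma\tau_p^{2\mu}\leq 0$ (again in all three cases, by the same convexity argument you used for $\Gamma_p$), one gets $-n\eta+\frac{j_0}{\alpha}\Psi(\tau_p)\leq -n\eta+\frac{n\eta}{2}+0=-\frac{n\eta}{2}$. So the missing ingredient is the lower bound $|\Im(\tau)|\geq\ell_p$, not the upper bound, and no appeal to \eqref{condEta2} is needed.
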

	
	The proof of inequalities \eqref{ineVarpiReste_casA}-\eqref{ineVarpiReste_res} mainly rely on the inequalities \eqref{estVarphi} and \eqref{estVarpiReste} and calculations similar as those done to obtain \eqref{estClas2}. For a complete proof of Lemma \ref{lem:inInterm}, we advice the interested reader to look at the proof of \cite[Lemmas 17, 18 and 19]{Coeuret} which prove similar inequalities in the context of the study of the temporal Green's function for the Laurent operator. The notation have intentionally been kept quite similar. The only difference is that the proof in \cite{Coeuret} are usually done with a positive velocity $\alpha$.
	
	\subsubsection{Estimates for $T^c_1$ and $T^c_2$}\label{subsec:est_Tc12}
	
	In this section, we prove generalized Gaussian estimates for the terms $T^c_1$ and $T^c_2$ when $j_0$ is close to $-n\alpha$.
	
	\begin{prop}\label{prop:est_Tc1}
		There exist two positive constants $C,c$ such that
		$$\forall n,j,j_0\in\N^*, \quad j_0\in\left[-\frac{n\alpha}{2},np\right]\Rightarrow |T^c_1|\leq \frac{C}{n^\frac{1}{2\mu}}\exp\left(-cj-c\left(\frac{|j_0+n\alpha|}{n^\frac{1}{2\mu}}\right)^\frac{2\mu}{2\mu-1}\right).$$
	\end{prop}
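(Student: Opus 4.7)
The plan is to exploit the key algebraic cancellation in \eqref{inPgrcprès}, namely that the difference $\Pg^c(\tau,j_0,j)-\Rc^c(j)e^{-j_0\varpi(\tau)}$ carries an extra factor of $|\tau|$ compared to the naive bound. This factor will cancel the $1/\tau$ weight in the definition \eqref{defTc1} of $T^c_1$, so the integrand $\tau\mapsto e^{n\tau}(\Pg^c(\tau,j_0,j)-\Rc^c(j)e^{-j_0\varpi(\tau)})/\tau$ extends holomorphically on all of $B_\varepsilon(0)$. By Cauchy's theorem I may therefore replace the contour $\Gamma_{in}$ by the deformed contour $\Gamma_{p,in}=\Gamma_p\cup\Gamma_{p,res}$ introduced in Section \ref{subsec:Int_path}, with the parameter $\tau_p\in[-\eta/2,\varepsilon_\#]$ chosen according to the trichotomy (\textbf{A}, \textbf{B}, \textbf{C}) based on the sign and size of $\rho(\zeta/\gamma)$.

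Next I would combine \eqref{inPgrcprès} with the elementary inequality $e^{-j_0\Re(\varpi(\tau))}\leq e^{-j_0(\Re(\varpi(\tau))-|\xi(\tau)\tau^{2\mu+1}|)}\cdot e^{j_0|\xi(\tau)\tau^{2\mu+1}|}$; absorbing the second factor into the constants (since $\tau\in B_\varepsilon(0)$ and $\xi$ is bounded, noting the precise bookkeeping already baked into \eqref{ineVarpiReste_casA}--\eqref{ineVarpiReste_res}) yields
\begin{equation*}
|T^c_1|\leq Ce^{-cj}\int_{\Gamma_{p,in}}e^{n\Re(\tau)-j_0(\Re(\varpi(\tau))-|\xi(\tau)\tau^{2\mu+1}|)}\,|d\tau|.
\end{equation*}
On the residual part $\Gamma_{p,res}$, inequality \eqref{ineVarpiReste_res} gives a uniform bound $e^{-n\eta/2}$, whose length is bounded, producing pure exponential decay in $n$, which is easily dominated by the target generalized Gaussian bound (using $|j_0+n\alpha|\leq C n$ in the regime $j_0\in[-n\alpha/2,np]$).

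The main part is the integral over $\Gamma_p$. I would parametrize by $t\in[-\ell_p,\ell_p]$ via the map $\gamma_p$ defined in \eqref{param}, use the uniform bound \eqref{hp} on $|h_p'|$, and apply whichever of \eqref{ineVarpiReste_casA}--\eqref{ineVarpiReste_casC} is relevant. This turns the bound into
\begin{equation*}
Ce^{-cj}\exp\bigl(A(n,j_0)\bigr)\int_{-\ell_p}^{\ell_p}e^{-nc_\star t^{2\mu}}\,dt\ \leq\ \frac{C e^{-cj}}{n^{1/(2\mu)}}\exp\bigl(A(n,j_0)\bigr),
\end{equation*}
where the integral is controlled by the standard rescaling $t\mapsto t/n^{1/(2\mu)}$. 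In Cases \textbf{B} and \textbf{C}, the central exponent $A(n,j_0)$ equals $-cn$ with $c>0$, which again dominates $(|j_0+n\alpha|/n^{1/(2\mu)})^{2\mu/(2\mu-1)}\leq Cn$.

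The delicate point, which is the main obstacle, is Case \textbf{A}, where the inequality \eqref{ineVarpiReste_casA} yields $A(n,j_0)=-\frac{n}{\alpha}(2\mu-1)\gamma(|\zeta|/\gamma)^{2\mu/(2\mu-1)}$. I would substitute $\zeta=-(j_0+n\alpha)/(2\mu n)$ and $\gamma=-j_0 A_R/n$ and check, using the two-sided bound \eqref{ineg_gamma} on $\gamma$ (which in the regime $j_0\in[-n\alpha/2,np]$ keeps $\gamma$ bounded away from zero with a definite sign), that this central exponent is precisely of the form $-c(|j_0+n\alpha|/n^{1/(2\mu)})^{2\mu/(2\mu-1)}$ for some $c>0$. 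The matching of exponents is essentially the optimality of the saddle-point choice $\tau_p=\rho(\zeta/\gamma)$, but the verification requires a careful sign and scaling analysis to produce a constant $c$ uniform in $n,j_0$ in the range considered, which is the only nonroutine calculation.
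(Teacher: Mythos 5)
Your proposal follows essentially the same path as the paper's proof: holomorphic removal of the pole via \eqref{inPgrcprès}, Cauchy deformation from $\Gamma_{in}$ to $\Gamma_{p,in}$, the trichotomy \textbf{A}/\textbf{B}/\textbf{C} with the parametrization \eqref{param}, the rescaling $u=n^{1/(2\mu)}t$ in case \textbf{A}, and the final identification of the central exponent with $-c\bigl(|j_0+n\alpha|/n^{1/(2\mu)}\bigr)^{2\mu/(2\mu-1)}$ via \eqref{ineg_gamma}. One minor slip worth correcting: the factor $e^{j_0|\xi(\tau)\tau^{2\mu+1}|}$ cannot be ``absorbed into the constants,'' since $j_0$ is of order $n$ and the factor can grow exponentially in $n$; the clean (and simpler) step is just that $j_0>0$ gives directly $-j_0\Re(\varpi(\tau))\leq -j_0\bigl(\Re(\varpi(\tau))-|\xi(\tau)\tau^{2\mu+1}|\bigr)$, which is precisely the quantity Lemma \ref{lem:inInterm} bounds, so no extra factor ever appears.
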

	
	\begin{proof}
		$\bullet$ Inequality \eqref{inPgrcprès} implies that the function
		$$\tau\in B_{\varepsilon}(0)\backslash\lc0\rc\mapsto e^{n\tau}\frac{\Pg^c(\tau,j_0,j)-\Rc^c(j)\exp(-j_0\varpi(\tau))}{\tau}$$
		can be holomorphically extended on $B_{\varepsilon}(0)$. Therefore, Cauchy's formula implies that
		$$T^c_1 = \int_{\Gamma_{in,p}}e^{n\tau}\frac{\Pg^c(\tau,j_0,j)-\Rc^c(j)\exp(-j_0\varpi(\tau))}{\tau}d\tau.$$
		Using \eqref{inPgrcprès}, there exist two positive constants $C,c$ such that
		\begin{align*}
			|T^c_1|&\leq Ce^{-cj} \int_{\Gamma_{in,p}}\exp\left(n\Re(\tau)-j_0\Re(\varpi(\tau))\right)|d\tau|\\
			&\leq Ce^{-cj} \left(\int_{\Gamma_{p}}\exp\left(n\Re(\tau)-j_0\Re(\varpi(\tau))\right)|d\tau|+ \int_{\Gamma_{res,p}}\exp\left(n\Re(\tau)-j_0\Re(\varpi(\tau))\right)|d\tau|\right).
		\end{align*}
		
		$\bullet$ Using \eqref{ineVarpiReste_res}, we have
		\begin{equation}\label{lemTc1In1}
			\int_{\Gamma_{res,p}}\exp\left(n\Re(\tau)-j_0\Re(\varpi(\tau))\right)|d\tau|\leq 2\pi\exp\left(-n\frac{\eta}{2}\right).
		\end{equation}
		
		$\bullet$ In cases \textbf{B} and \textbf{C}, using \eqref{ineVarpiReste_casB} or \eqref{ineVarpiReste_casC} depending on the case, there exist two constants $C,c>0$ such that
		\begin{equation}\label{lemTc1In2}
			\int_{\Gamma_{p}}\exp\left(n\Re(\tau)-j_0\Re(\varpi(\tau))\right)|d\tau|\leq C\exp\left(-cn\right).
		\end{equation}
		
		$\bullet$ In case \textbf{A}, using \eqref{ineVarpiReste_casA}, we have
		$$\int_{\Gamma_{p}}\exp\left(n\Re(\tau)-j_0\Re(\varpi(\tau))\right)|d\tau|\leq \int_{\Gamma_{p}}\exp\left(-nc_\star\Im(\tau)^{2\mu}\right)|d\tau| \exp\left(- \frac{n}{\alpha} (2\mu-1)\gamma\left(\frac{|\zeta|}{\gamma}\right)^\frac{2\mu}{2\mu-1}\right). $$
		Using the parametrization \eqref{param}, the inequality \eqref{hp} and the change of variables $u=n^\frac{1}{2\mu}t$, we have
		$$\int_{\Gamma_{p}}  e^{-nc_\star\Im(\tau)^{2\mu}}|d\tau| \lesssim \int_{-\ell_{p}}^{\ell_{p}}e^{-nc_\star t^{2\mu}}dt\lesssim \frac{1}{n^\frac{1}{{2\mu}}}.$$
		Thus,
		$$\int_{\Gamma_{p}}\exp\left(n\Re(\tau)-j_0\Re(\varpi(\tau))\right)|d\tau| \lesssim \frac{1}{n^\frac{1}{2 \mu}} \exp\left(- \frac{n}{\alpha} (2\mu-1)\gamma\left(\frac{|\zeta|}{\gamma}\right)^\frac{2\mu}{2\mu-1}\right). $$
		Lastly, the inequality \eqref{ineg_gamma} implies that we have a constant $c>0$ independent from $j_0$ and $n$ such that 
		$$- \frac{n}{\alpha} (2\mu-1)\gamma\left(\frac{|\zeta|}{\gamma}\right)^\frac{2\mu}{2\mu-1} \leq -c\left(\frac{|n\alpha+j_0|}{n^\frac{1}{2\mu}}\right)^\frac{2\mu}{2\mu-1}$$
		so,
		\begin{equation}\label{lemTc1In3}
			\int_{\Gamma_{p}}\exp\left(n\Re(\tau)-j_0\Re(\varpi(\tau))\right)|d\tau|\lesssim \frac{1}{n^\frac{1}{2\mu}} \exp\left(-c \left(\frac{|n\alpha+j_0|}{n^\frac{1}{2\mu}}\right)^\frac{2\mu}{2\mu-1}\right).
		\end{equation}
	
		Combining \eqref{lemTc1In1}-\eqref{lemTc1In3}, we conclude the proof of Proposition \ref{prop:est_Tc1}.
	\end{proof}
	
	\begin{prop}\label{lem:est_Tc2}
		There exist two positive constants $C,c$ such that
		$$\forall n,j,j_0\in\N^*, \quad j_0\in\left[-\frac{n\alpha}{2},np\right]\Rightarrow |T^c_2|\leq \frac{C}{n^\frac{1}{2\mu}}\exp\left(-cj-c\left(\frac{|j_0+n\alpha|}{n^\frac{1}{2\mu}}\right)^\frac{2\mu}{2\mu-1}\right).$$
	\end{prop}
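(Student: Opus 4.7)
The approach mirrors the one carried out for $T^c_1$ in Proposition \ref{prop:est_Tc1}: I shift the contour from $\Gamma_{in}$ to the parameterized contour $\Gamma_{in,p}=\Gamma_p\cup\Gamma_{res,p}$ associated with the appropriate choice of $\tau_p$ (cases \textbf{A}, \textbf{B}, \textbf{C}), and then use the pointwise bounds from Lemma \ref{lem:inInterm} to control the integrand. The first step is to justify the contour deformation. Because of the decomposition \eqref{decVarpi}, the difference $\varpi(\tau)-\varphi(\tau)=\xi(\tau)\tau^{2\mu+1}$ vanishes to high order at $\tau=0$, so the numerator $e^{-j_0\varpi(\tau)}-e^{-j_0\varphi(\tau)}$ vanishes at $\tau=0$ and the whole integrand extends holomorphically on $B_\varepsilon(0)$. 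Cauchy's theorem therefore allows me to replace $\Gamma_{in}$ by $\Gamma_{in,p}$.

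The key new ingredient compared to $T^c_1$ is the refined bound \eqref{in:DevAsymp}, which provides an extra factor $n|\tau|^{2\mu+1}$ in the numerator. Combined with the denominator $|\tau|$ and the exponential bound \eqref{inPrcprès} on $|\Rc^c(j)|$, I obtain
\begin{equation*}
|T^c_2|\leq Ce^{-cj}\int_{\Gamma_{in,p}} n|\tau|^{2\mu}\exp\!\bigl(n\Re(\tau)-j_0(\Re(\varpi(\tau))-|\xi(\tau)\tau^{2\mu+1}|)\bigr)|d\tau|.
\end{equation*}
On $\Gamma_{res,p}$, inequality \eqref{ineVarpiReste_res} gives an exponential factor $e^{-n\eta/2}$, and since $|\tau|^{2\mu}$ is uniformly bounded on $B_\varepsilon(0)$, the polynomial factor $n$ is easily absorbed by shrinking the exponent slightly; the length of $\Gamma_{res,p}$ is bounded, so this contribution decays exponentially in $n$. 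Similarly, in cases \textbf{B} and \textbf{C}, inequalities \eqref{ineVarpiReste_casB} and \eqref{ineVarpiReste_casC} provide an exponential decay $e^{-cn}$ that dominates the $n$ factor after a harmless reduction of the constant.

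The only delicate contribution is $\Gamma_p$ in case \textbf{A}, and this is where I expect the main obstacle. Using \eqref{ineVarpiReste_casA} together with the elementary bound $|\tau|^{2\mu}\lesssim \tau_p^{2\mu}+\Im(\tau)^{2\mu}$ and the parametrization \eqref{param}, the change of variables $u=n^{1/(2\mu)}t$ yields
\begin{equation*}
\int_{\Gamma_p} n\Im(\tau)^{2\mu}e^{-nc_\star\Im(\tau)^{2\mu}}|d\tau|\lesssim \frac{1}{n^{1/(2\mu)}}, \qquad n\tau_p^{2\mu}\int_{\Gamma_p}e^{-nc_\star\Im(\tau)^{2\mu}}|d\tau|\lesssim \frac{n\tau_p^{2\mu}}{n^{1/(2\mu)}}.
\end{equation*}
The second term is the subtle one: I must absorb the large prefactor $n\tau_p^{2\mu}$. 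The crucial observation is that in case \textbf{A}, $\tau_p^{2\mu-1}$ is comparable to $|\zeta|/|\gamma|$, so
\begin{equation*}
n\tau_p^{2\mu}\asymp -\frac{n}{\alpha}(2\mu-1)\gamma\Bigl(\frac{|\zeta|}{\gamma}\Bigr)^{\!\frac{2\mu}{2\mu-1}}\asymp \Bigl(\frac{|j_0+n\alpha|}{n^{1/(2\mu)}}\Bigr)^{\!\frac{2\mu}{2\mu-1}},
\end{equation*}
which is exactly the exponent appearing in the generalized Gaussian bound from \eqref{ineVarpiReste_casA}. Since $xe^{-cx}\lesssim e^{-cx/2}$, the prefactor is absorbed into half of the exponential decay, yielding the required bound. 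Putting the three contributions together concludes the proof.
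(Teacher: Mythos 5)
Your proposal is correct and follows essentially the same route as the paper: you shift the contour to $\Gamma_{in,p}$ after noting holomorphic extendability, apply the refined bound \eqref{in:DevAsymp} with $|\Rc^c(j)|\leq Ce^{-cj}$, dispatch $\Gamma_{res,p}$ and cases \textbf{B}, \textbf{C} by the crude exponential estimates, and in case \textbf{A} split $|\tau|^{2\mu}\lesssim\tau_p^{2\mu}+\Im(\tau)^{2\mu}$ and absorb the $n\tau_p^{2\mu}$ prefactor into half of the Gaussian exponent. The only cosmetic difference is that the paper first uses the boundedness of $x^{2\mu}e^{-\frac{c}{2}x^{2\mu}}$ with $x=n^{1/(2\mu)}|\tau_p|$ and only afterwards converts $n|\tau_p|^{2\mu}$ to $\bigl(|j_0+n\alpha|/n^{1/(2\mu)}\bigr)^{2\mu/(2\mu-1)}$ via \eqref{ineg_gamma}, whereas you identify $n\tau_p^{2\mu}$ with the Gaussian exponent up to constants first and then invoke $xe^{-cx}\lesssim e^{-cx/2}$; these are the same argument. (A minor wording point: your chain of $\asymp$ mixes a negative quantity with positive ones, so it should be read as a comparison of absolute values, which is clearly what you intend.)
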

	
	\begin{proof}
		The function 
		$$\tau\in B_{\varepsilon}(0)\backslash\lc0\rc\mapsto e^{n\tau}\frac{e^{-j_0\varpi(\tau)}-e^{-j_0\varphi(\tau)}}{\tau}$$
		can be holomorphically extended on $B_{\varepsilon}(0)$. Therefore, Cauchy's formula implies that
		$$T^c_2 = \Rc^c(j)\int_{\Gamma_{in,p}}e^{n\tau}\frac{e^{-j_0\varpi(\tau)}-e^{-j_0\varphi(\tau)}}{\tau}d\tau.$$
		
		Using \eqref{in:DevAsymp} and \eqref{inPrcprès}, there exist two positive constant $C,c$ such that
		\begin{equation}\label{lemTc2In1}
			|T^c_2| \leq Ce^{-cj}n \int_{\Gamma_{in,p}}|\tau|^{2\mu} \exp(n \Re(\tau)-j_0 (\Re(\varpi(\tau)) -|\xi(\tau)\tau^{2\mu+1}|))|d\tau|.
		\end{equation}
		
		$\bullet$ Using \eqref{ineVarpiReste_res}, there exist a constant $C>0$ independent from $j_0$ and $n$ such that
		\begin{equation}\label{lemTc2In2}
			n\int_{\Gamma_{res,p}}|\tau|^{2\mu} \exp(n \Re(\tau)-j_0 (\Re(\varpi(\tau)) -|\xi(\tau)\tau^{2\mu+1}|))|d\tau|\leq Cn\exp\left(-n\frac{\eta}{2}\right).
		\end{equation}
		
		$\bullet$ In cases \textbf{B} and \textbf{C}, using \eqref{ineVarpiReste_casB} or \eqref{ineVarpiReste_casC} depending on the case, there exist two constants $C,c>0$ such that
		\begin{equation}\label{lemTc2In3}
			n\int_{\Gamma_{p}}|\tau|^{2\mu} \exp(n \Re(\tau)-j_0 (\Re(\varpi(\tau)) -|\xi(\tau)\tau^{2\mu+1}|))|d\tau|\leq Cn\exp\left(-cn\right).
		\end{equation}
		
		$\bullet$ In case \textbf{A}, using \eqref{ineVarpiReste_casA}, we have
		\begin{multline*}
			n\int_{\Gamma_{p}}|\tau|^{2\mu} \exp(n \Re(\tau)-j_0 (\Re(\varpi(\tau)) -|\xi(\tau)\tau^{2\mu+1}|))|d\tau| \\ \leq n\int_{\Gamma_{p}}|\tau|^{2\mu}\exp\left(-nc_\star\Im(\tau)^{2\mu}\right)|d\tau| \exp\left(- \frac{n}{\alpha} (2\mu-1)\gamma\left(\frac{|\zeta|}{\gamma}\right)^\frac{2\mu}{2\mu-1}\right).
		\end{multline*}
		But, the inequality \eqref{ineg_gamma} and the fact that $\rho\left(\frac{\zeta}{\gamma}\right)=\tau_p$ imply
		$$-\frac{n}{\alpha} (2\mu-1)\gamma\left(\frac{|\zeta|}{\gamma}\right)^\frac{2\mu}{2\mu-1}\leq -\frac{2\mu-1}{2}A_R n|\tau_p|^{2\mu}.$$
		If we introduce $c>0$ small enough, then
		$$n\int_{\Gamma_{p}}|\tau|^{2\mu} \exp(n \Re(\tau)-j_0 (\Re(\varpi(\tau)) -|\xi(\tau)\tau^{2\mu+1}|))|d\tau| \leq n\int_{\Gamma_{p}}|\tau|^{2\mu}\exp\left(-nc_\star\Im(\tau)^{2\mu}\right)|d\tau| \exp\left(- cn|\tau_p|^{2\mu}\right).$$
		Using the parametrization \eqref{param} and the inequality \eqref{hp}, we have
		$$\int_{\Gamma_{p}} |\tau|^{2\mu} e^{-nc_\star\Im(\tau)^{2\mu}}|d\tau| \lesssim \int_{-\ell_{p}}^{\ell_{p}}(|\tau_p|^{2\mu}+|t|^{2\mu})e^{-nc_\star t^{2\mu}}dt.$$
		The change of variables $u=n^\frac{1}{2\mu}t$ and the fact that the function $\displaystyle x\geq0\mapsto x^{2\mu}\exp\left(-\frac{c}{2}x^{2\mu}\right)$ is bounded imply
		$$\lc \begin{array}{c}
			\displaystyle \int_{-\ell_{p}}^{\ell_{p}}|t|^{2\mu}e^{-nc_\star t^{2\mu}}dt\lesssim \frac{1}{n^{1+\frac{1}{2\mu}}},\\
			\displaystyle \int_{-\ell_{p}}^{\ell_{p}}|\tau_p|^{2\mu}e^{-nc_\star t^{2\mu}}dt\lesssim \frac{1}{n^{1+\frac{1}{2\mu}}}\exp\left(\frac{c}{2}n|\tau_p|^{2\mu}\right).
		\end{array}\right.$$
		Thus,
		$$n\int_{\Gamma_{p}}|\tau|^{2\mu} \exp(n \Re(\tau)-j_0 (\Re(\varpi(\tau)) -|\xi(\tau)\tau^{2\mu+1}|))|d\tau| \lesssim \frac{1}{n^\frac{1}{2 \mu}} \exp\left(- \frac{c}{2}n|\tau_p|^{2\mu}\right). $$
		Lastly, the inequality \eqref{ineg_gamma} implies that we have a constant $c>0$ independent from $j_0$ and $n$ such that 
		$$\frac{c}{2}n|\tau_p|^{2\mu} \geq \tilde{c}\left(\frac{|n\alpha+j_0|}{n^\frac{1}{2\mu}}\right)^\frac{2\mu}{2\mu-1}$$
		so,
		\begin{equation}\label{lemTc2In4}
			n\int_{\Gamma_{p}}|\tau|^{2\mu} \exp(n \Re(\tau)-j_0 (\Re(\varpi(\tau)) -|\xi(\tau)\tau^{2\mu+1}|))|d\tau|\lesssim \frac{1}{n^\frac{1}{2\mu}} \exp\left(-\tilde{c} \left(\frac{|n\alpha+j_0|}{n^\frac{1}{2\mu}}\right)^\frac{2\mu}{2\mu-1}\right).
		\end{equation}
		
		Combining \eqref{lemTc2In1}-\eqref{lemTc2In4}, we conclude the proof of Proposition \ref{lem:est_Tc2}.
	\end{proof}
	
	\subsubsection{Calculations around $T^c_{princ}$}\label{subsec:est_Tcprinc}
	
	There just remains to study the term $T^c_{princ}$ defined by \eqref{defTcprinc}. The end goal of this section is to prove the following proposition.
	\begin{prop}\label{prop:est_Tcprinc}
		There exist two positive constants $C,c$ such that for all $n,j_0\in\N^*$ which verify $-\frac{n\alpha}{2}\leq j_0\leq np$, we have 
		$$\left|\int_{\Gamma_{in}} \frac{\exp(n\tau-j_0\varphi(\tau))}{\tau} d\tau -2i\pi E_{2\mu}^\beta\left(\frac{j_0+n\alpha}{n^\frac{1}{2\mu}}\right)\right|\leq \frac{C}{n^\frac{1}{2\mu}}\exp\left(-c\left(\frac{|j_0+n\alpha|}{n^\frac{1}{2\mu}}\right)^\frac{2\mu}{2\mu-1}\right).$$
	\end{prop}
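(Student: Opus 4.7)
The plan is to deform $\Gamma_{in}$ to the contour $\Gamma_{p,in}=\Gamma_p\cup\Gamma_{p,res}$ introduced in Section \ref{subsec:Int_path} and then to identify the main contribution through a Laplace-type representation of $E_{2\mu}^\beta$. The integrand $\tau\mapsto e^{n\tau-j_0\varphi(\tau)}/\tau$ is meromorphic on $\C$ with a unique simple pole at $\tau=0$ of residue $1$, so the deformation must be tracked with care: when $\tau_p>0$ (subcase $\rho(\zeta/\gamma)\geq 0$ of case \textbf{A}, together with case \textbf{B}), both $\Gamma_{in}$ and $\Gamma_p$ pass to the right of $0$ and no residue is picked up; when $\tau_p\leq 0$ (the other subcase of \textbf{A} and case \textbf{C}), $\Gamma_{p,in}$ lies entirely in the left half-plane, the closed curve $\Gamma_{in}-\Gamma_{p,in}$ winds once counterclockwise around $0$, and one obtains $\int_{\Gamma_{in}}=\int_{\Gamma_{p,in}}+2i\pi$. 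The piece $\int_{\Gamma_{p,res}}$ is bounded by $Ce^{-n\eta/2}$ exactly as in \eqref{ineVarpiReste_res} and will be absorbed into the final remainder.

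On $\Gamma_p$ one performs the orientation-preserving dilation $\tau=|\alpha|u/n^{1/(2\mu)}$; using $\alpha=-|\alpha|$, $\alpha^{2\mu+1}=-|\alpha|^{2\mu+1}$, and $d\tau/\tau=du/u$, a direct computation yields
$$n\tau-j_0\varphi(\tau)\;=\;-xu+(-1)^{\mu+1}\beta_\star u^{2\mu},\qquad x:=\frac{j_0+n\alpha}{n^{1/(2\mu)}},\qquad \beta_\star:=\frac{\beta j_0}{n|\alpha|}.$$
In case \textbf{III} the ratio $j_0/(n|\alpha|)$ lies in the compact set $[1/2,p/|\alpha|]$, so $\Re(\beta_\star)$ is uniformly bounded above and away from $0$. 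One then closes the image $\widehat{\Gamma}_p$ of $\Gamma_p$ to an infinite vertical line on the appropriate side of $0$ via two short horizontal arcs at heights $\pm n^{1/(2\mu)}\ell_p/|\alpha|$; on these arcs $|\Im(u)|^{2\mu}\sim n$, so the $u^{2\mu}$ term supplies $\Re((-1)^{\mu+1}\beta_\star u^{2\mu})\leq -cn$, which dominates $|e^{-xu}|\leq e^{C|j_0+n\alpha|}=e^{O(n)}$ and produces a closing error of size $e^{-c'n}$.

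The Fourier representation \eqref{def:H2mu_et_E2mu} of $H_{2\mu}^\beta$, rotated by the change of variable $u=it$ and then integrated in the outer variable over $[x,+\infty)$ with the contour shifted to $\Re(u)>0$, yields the Laplace-type identity
$$2i\pi\, E_{2\mu}^{\beta_\star}(x)\;=\;\int_{C_+}\frac{e^{-xu+(-1)^{\mu+1}\beta_\star u^{2\mu}}}{u}\,du,\qquad C_+:=c_0+i\R,\ c_0>0,$$
together with $\int_{C_+}-\int_{C_-}=2i\pi$ (residue at $0$) which is used when $\widehat{\Gamma}_p$ must be closed on the left. Combining with the bookkeeping of the first paragraph one obtains in every case
$$\int_{\Gamma_{in}}\frac{e^{n\tau-j_0\varphi(\tau)}}{\tau}\,d\tau\;=\;2i\pi\,E_{2\mu}^{\beta_\star}(x)+\mathcal{E}_1,$$
with $|\mathcal{E}_1|$ controlled by the desired Gaussian bound.

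It remains to pass from the effective parameter $\beta_\star$ back to $\beta$. A rescaling of the Fourier integral defining $H_{2\mu}^\beta$ gives the identity $E_{2\mu}^{\beta_\star}(x)=E_{2\mu}^\beta\bigl((\beta/\beta_\star)^{1/(2\mu)}x\bigr)=E_{2\mu}^\beta(y)$ with $y:=(j_0+n\alpha)(|\alpha|/j_0)^{1/(2\mu)}$, and the discrepancy satisfies
$$|y-x|\;\leq\;|x|\,\bigl|(n|\alpha|/j_0)^{1/(2\mu)}-1\bigr|\;\leq\;C|x|\,\frac{|n|\alpha|-j_0|}{j_0}\;=\;\frac{Cn^{1/(2\mu)}x^2}{j_0}$$
via the elementary inequality $|a^{1/(2\mu)}-1|\leq C|a-1|$ applied to $a=n|\alpha|/j_0\in[|\alpha|/p,2]$. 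Since $|x|$ and $|y|$ are comparable, \eqref{inH} yields $|H_{2\mu}^\beta(\xi)|\leq Ce^{-c|x|^{2\mu/(2\mu-1)}}$ uniformly on $[x,y]$, and the mean value theorem — combined with $n^{-(2\mu-1)/(2\mu)}\leq n^{-1/(2\mu)}$ for $\mu\geq 1$ and the absorption of the polynomial factor $x^2$ into a slightly weaker exponent — produces the remainder bound $\frac{C}{n^{1/(2\mu)}}e^{-c'|x|^{2\mu/(2\mu-1)}}$. The main obstacle of the proof is this last delicate balance between polynomial growth and Gaussian decay, coupled with the residue bookkeeping of the first paragraph — which is essential precisely because $E_{2\mu}^\beta(x)$ transitions between its $\approx 1$ and $\approx 0$ asymptotic regimes as $x$ changes sign, exactly when the apex $\tau_p$ of $\Gamma_p$ changes sign.
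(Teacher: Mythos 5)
Your proposal takes a genuinely different route from the paper's, and the difference matters. The paper deforms $T^c_{princ}$ directly to a fixed vertical line $\Gamma_{s,\infty}=\lc s+it,\ t\in\R\rc$ with $s>0$ \emph{constant} (its Steps 1 and 2), then reads off $\int_{\Gamma_{s,\infty}}\frac{e^{n\tau-j_0\varphi(\tau)}}{\tau}d\tau=2i\pi E_{2\mu}^\beta\bigl((j_0+n\alpha)(-j_0/\alpha)^{-1/(2\mu)}\bigr)$ exactly from the change of variables and \eqref{egF}. You instead deform to the $n,j_0$-dependent parabolic contour $\Gamma_{p,in}$ — the contour the paper reserves for $T^c_1$ and $T^c_2$, whose integrands have been made \emph{holomorphic at $\tau=0$}. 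Since the $T^c_{princ}$ integrand has a genuine simple pole at $\tau=0$, this choice reintroduces precisely the difficulty the paper sidesteps, and two steps in your sketch are not substantiated.

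First, the pole-proximity issue. The apex $\tau_p$ can be arbitrarily close to $0$ — exactly when $j_0\approx -n\alpha$, which is the critical transition regime where $E_{2\mu}^\beta$ passes through $1/2$ and the claimed $n^{-1/(2\mu)}$ bound is sharpest. As $\tau_p\to 0^\pm$, $\Gamma_p$ and its dilated image $\widehat{\Gamma}_p$ pass arbitrarily close to the pole, and the residue bookkeeping you describe (``$\Gamma_{in}-\Gamma_{p,in}$ winds once counterclockwise,'' ``$\int_{C_+}-\int_{C_-}=2i\pi$'') is a case split on $\mathrm{sgn}(\tau_p)$ right through the most delicate regime. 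That the constants in your error $\mathcal{E}_1$ stay \emph{uniformly} bounded as $\tau_p\to 0$ is not obvious and is not argued; at $\tau_p=0$ the deformation is not even admissible without a principal-value or limiting argument. This is not a stylistic quibble: it is the crux of why the paper uses a fixed $s>0$ for this term and the parabolic path only for the pole-free terms.

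Second, the closing estimate. You claim that on the horizontal arcs at heights $\pm n^{1/(2\mu)}\ell_p/|\alpha|$ the term $\Re\bigl((-1)^{\mu+1}\beta_\star u^{2\mu}\bigr)\leq -cn$ ``dominates $|e^{-xu}|\leq e^{O(n)}$.'' But both sides are $O(n)$, so this is an assertion about constants, not sizes, and it needs a quantitative comparison. The paper encodes exactly such comparisons in \eqref{condEta2} and \eqref{condS}, but these are calibrated to closing arcs at the \emph{fixed} height $r_\varepsilon(\eta)$. Your arcs sit at the strictly lower, variable height $\ell_p\leq l_{extr}\leq r_\varepsilon(\eta)$ (from $\eta^2+\ell_p^2\leq\varepsilon^2$), for which \eqref{condEta2}--\eqref{condS} do not directly apply and the inequality $n\Re(\tau)-j_0\Re(\varphi(\tau))\leq -cn$ must be re-derived from \eqref{estVarphi}. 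One can check, after some algebra using $2\mu A_R\tau_p^{2\mu-1}=1+n\alpha/j_0$ in Case \textbf{A} and the monotonicity of $\Psi$, that the bracket $\bigl(-\tfrac{n\alpha}{j_0}-1\bigr)\Re(\tau)+A_R\Re(\tau)^{2\mu}-\Psi(\tau_p)+\Psi(-\eta)$ is in fact negative and bounded away from $0$ at both arc endpoints, so the estimate does seem to hold — but that verification is exactly the content of the step, and ``the $u^{2\mu}$ term dominates'' does not supply it.

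The parts that do match the paper are the final rescaling (your $E_{2\mu}^{\beta_\star}(x)=E_{2\mu}^\beta(y)$ with $y=(j_0+n\alpha)(|\alpha|/j_0)^{1/(2\mu)}$ is the paper's $(j_0+n\alpha)(-j_0/\alpha)^{-1/(2\mu)}$) and the mean-value comparison of $E_{2\mu}^\beta(y)$ with $E_{2\mu}^\beta(x)$ via \eqref{inH}; those are correct. But the route by which you reach the contour formula for $E_{2\mu}^{\beta_\star}(x)$ is where the work lies, and as written it has two real gaps. A repair would either (i) avoid $\Gamma_p$ for $T^c_{princ}$ altogether and deform to a fixed $\Gamma_{s,\infty}$ as the paper does, or (ii) carry out in full the uniform-in-$\tau_p$ closing-arc bounds and the residue bookkeeping near $\tau_p=0$, which amounts to reproducing conditions analogous to \eqref{condEta2}--\eqref{condS} adapted to the variable height $\ell_p$.
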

	
	By utilizing \eqref{decompoTc} along with Propositions \ref{prop:est_Tc1}, \ref{lem:est_Tc2}, and \ref{prop:est_Tcprinc}, we complete the demonstration of Proposition \ref{prop:est_Tc_J0_Close}. Subsequently, this concludes the proof of Theorem \ref{th:Green}.
	
	Thus, there just remains to prove Proposition \ref{prop:est_Tcprinc}. The main idea of the proof is to change the integration path on the term
	$$\int_{\Gamma_{in}}\frac{e^{n\tau}e^{-j_0\varphi(\tau)}}{\tau}d\tau.$$
	
	\begin{proof}
	We fix a constant $s>0$ such that for all $n,j_0\in\N^*$ which verify $-\frac{n\alpha}{2}\leq j_0\leq np$, we have
	\begin{equation}\label{condS}
		\left(-\frac{n\alpha}{j_0}-1\right)s+A_Rs^{2\mu}\leq \frac{A_I}{2}r_\varepsilon(\eta)^{2\mu}.
	\end{equation}
	We introduce the paths $\Gamma_s$, $\Gamma_{comp}^+$, $\Gamma_{comp}^-$, $\Gamma_{s,\infty}^+$, $\Gamma_{s,\infty}^-$ and $\Gamma_{s,\infty}$ represented on Figure \ref{chem2} and that are defined as
	\begin{align*}
		\Gamma_{s} &:= \lc s + i t, \quad t \in[-r_\varepsilon(\eta),r_\varepsilon(\eta)]\rc,&\Gamma_{s,\infty}^+ &:= \lc s + i t, \quad t \in[r_\varepsilon(\eta),+\infty[\rc,\\
		\Gamma^+_{comp} &:= \lc t + i r_\varepsilon(\eta), \quad t \in[-\eta,s]\rc,&\Gamma_{s,\infty}^- &:= \lc s +i t, \quad t \in]-\infty,-r_\varepsilon(\eta)]\rc,\\
		\Gamma^-_{comp} &:= \lc t -i r_\varepsilon(\eta), \quad t \in[-\eta,s]\rc,&\Gamma_{s,\infty} &:=\Gamma_{s,\infty}^-\cup\Gamma_s\cup\Gamma_{s,\infty}^+.
	\end{align*}
	
	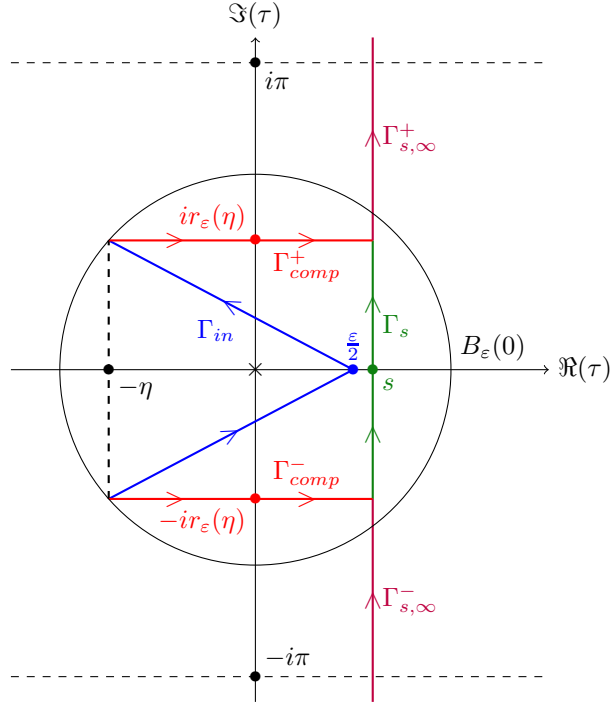
\begin{figure}
		\begin{center}
			\begin{tikzpicture}[scale=1.3]
				\draw[->] (-2.5,0) -- (3,0) node[right] {$\Re(\tau)$};
				\draw[->] (0,-3.4) -- (0,3.4) node[above] {$\Im(\tau)$};
				\draw[dashed] (-2.5,pi) -- (3,pi);
				\draw[dashed] (-2.5,-pi) -- (3,-pi);
				\draw (0,pi) node {$\bullet$} node[below right] {$i\pi$};
				\draw (0,-pi) node {$\bullet$} node[above right] {$-i\pi$};
				\draw (-1.5,0) node {$\bullet$} node[below right] {$-\eta$};
				\draw (2,0) node[above right] {$B_\varepsilon(0)$};
				
				\draw[blue,thick] (-1.5,{-sqrt(4-1.5^2)}) -- (1,0) node[midway,sloped] {$>$};
				\draw[blue,thick] (1,0) -- (-1.5,{sqrt(4-1.5^2)}) node[midway,sloped] {$<$};
				\draw[dashed,thick] (-1.5,{-sqrt(4-1.5^2)}) -- (-1.5,{sqrt(4-1.5^2)});
				\draw[blue] (1,0) node {$\bullet$} node[above] {$\frac{\varepsilon}{2}$}; 
				\draw[blue] (-0.4,0.4) node {$\Gamma_{in}$};
				
				\draw[dartmouthgreen,thick] (1.2,{-sqrt(4-1.5^2)}) -- (1.2,{sqrt(4-1.5^2)}) node[near end,sloped] {$>$} node[near start,sloped] {$>$} node[near end,below right] {$\Gamma_{s}$};
				\draw[dartmouthgreen] (1.2,0) node {$\bullet$} node[below right] {$s$};
				
				\draw[red,thick] (-1.5,{-sqrt(4-1.5^2)}) -- (1.2,{-sqrt(4-1.5^2)}) node[near start,sloped] {$>$} node[near end,sloped] {$>$} node[near end,above] {$\Gamma^-_{comp}$};
				\draw[red,thick] (-1.5,{sqrt(4-1.5^2)}) -- (1.2,{sqrt(4-1.5^2)}) node[near start,sloped] {$>$} node[near end,sloped] {$>$} node[near end,below] {$\Gamma^+_{comp}$};
				
				\draw[red] (0,{sqrt(4-1.5^2)}) node {$\bullet$} node[above left] {$ir_\varepsilon(\eta)$};
				\draw[red] (0,{-sqrt(4-1.5^2)}) node {$\bullet$} node[below left] {$-ir_\varepsilon(\eta)$};
				
				\draw[purple,thick] (1.2,{sqrt(4-1.5^2)}) -- (1.2,3.4) node[midway,sloped] {$>$} node[midway, right] {$\Gamma_{s,\infty}^+$};
				\draw[purple,thick] (1.2,-3.4) -- (1.2,{-sqrt(4-1.5^2)}) node[midway,sloped] {$>$} node[midway, right] {$\Gamma_{s,\infty}^-$};
								
				\draw (0,0) node {$\times$} circle (2);
			\end{tikzpicture}
			\caption{Representation of the paths $\Gamma_s$ (in green), $\Gamma_{comp}^+$, $\Gamma_{comp}^-$ (both in red), $\Gamma_{s,\infty}^+$, $\Gamma_{s,\infty}^-$ (both in purple) and $\Gamma_{s,\infty}:=\Gamma_{s,\infty}^-\cup\Gamma_s\cup\Gamma_{s,\infty}^+$.}
			\label{chem2}
		\end{center}
	\end{figure}

		The proof of Proposition \ref{prop:est_Tcprinc} is separated in different steps where we will use the different paths we introduced.
	
		\textbf{$\bullet$ Step 1:} In this step, we start by proving that there exist two positive constants $C,c$ such that for all $n,j_0\in\N^*$ which verify $-\frac{n\alpha}{2}\leq j_0\leq np$, we have 
		\begin{equation}\label{inDec}
			\left|\int_{\Gamma_{in}} \frac{\exp(n\tau-j_0\varphi(\tau))}{\tau} d\tau - \int_{\Gamma_{s}} \frac{\exp(n\tau-j_0\varphi(\tau))}{\tau} d\tau \right|\leq Ce^{-cn}.
		\end{equation}
		
		Cauchy's formula implies that
		\begin{equation}\label{inDec1}
			\left|\int_{\Gamma_{in}}\frac{e^{n\tau-j_0\varphi(\tau)}}{\tau}d\tau -\int_{\Gamma_{s}}\frac{e^{n\tau-j_0\varphi(\tau)}}{\tau}d\tau\right|\leq \left|\int_{\Gamma^+_{comp}}\frac{e^{n\tau-j_0\varphi(\tau)}}{\tau}d\tau\right|+\left|\int_{\Gamma^-_{comp}}\frac{e^{n\tau-j_0\varphi(\tau)}}{\tau}d\tau\right|.
		\end{equation}
		We need to find estimates for the two terms on the right-hand side. Both terms will be bounded similarly so we will focus on the first one. First, we observe that
		$$\left|\int_{\Gamma^+_{comp}}\frac{e^{n\tau-j_0\varphi(\tau)}}{\tau}d\tau\right|\leq \frac{1}{r_\varepsilon(\eta)}\int_{-\eta}^s\exp\left(nt -j_0 \Re(\varphi(t + i r_\varepsilon(\eta)))\right)dt.$$
		Using \eqref{estVarphi}, we have for $t\in[-\eta,s]$
		$$nt -j_0 \Re(\varphi(t + i r_\varepsilon(\eta)))\leq \left(n+\frac{j_0}{\alpha}\right)t -\frac{j_0}{\alpha}A_R t^{2\mu} + \frac{j_0}{\alpha} A_I r_\varepsilon(\eta)^{2\mu} = -\frac{j_0}{\alpha}\left(\left(-\frac{n\alpha}{j_0}-1\right)t+A_Rt^{2\mu}-A_Ir_\varepsilon(\eta)^{2\mu}\right).$$
		Since the function
		$$t\in[-\eta,s]\mapsto\left(-\frac{n\alpha}{j_0}-1\right)t+A_Rt^{2\mu}-A_Ir_\varepsilon(\eta)^{2\mu}$$
		is convex, it attains its maximum for $t\in\lc-\eta,s\rc$. Thus, the conditions \eqref{condEta2} and \eqref{condS} on $\eta$ and $s$ imply that for $t\in[-\eta,s]$
		$$nt -j_0 \Re(\varphi(t + i r_\varepsilon(\eta)))\leq\frac{j_0}{2\alpha}A_Ir_\varepsilon(\eta)^{2\mu}.$$
		Thus, recalling that $\alpha$ is negative and that $j_0\in\left[-\frac{n\alpha}{2},np\right]$, we have that
		$$\left|\int_{\Gamma^+_{comp}}\frac{e^{n\tau-j_0\varphi(\tau)}}{\tau}d\tau\right|\leq \frac{s+\eta}{r_\varepsilon(\eta)}\exp\left(-\frac{n}{4}A_Ir_\varepsilon(\eta)^{2\mu}\right).$$
		Using a similar proof to bound the second term in the right-hand side of \eqref{inDec1}, we can conclude the proof of \eqref{inDec}.
		
		\textbf{$\bullet$ Step 2:} In this second step, we now prove that there exist two positive constants $C,c$ such that for all $n,j_0\in\N^*$ which verify $-\frac{n\alpha}{2}\leq j_0\leq np$, we have 
		\begin{equation}\label{inProl}
			\left|\int_{\Gamma_{s}} \frac{\exp(n\tau-j_0\varphi(\tau))}{\tau} d\tau - \int_{\Gamma_{s,\infty}} \frac{\exp(n\tau-j_0\varphi(\tau))}{\tau} d\tau \right|\leq Ce^{-cn}.
		\end{equation}
		
		We have that
		\begin{equation}\label{inProl1}
			\left|\int_{\Gamma_{s}} \frac{\exp(n\tau-j_0\varphi(\tau))}{\tau} d\tau - \int_{\Gamma_{s,\infty}} \frac{\exp(n\tau-j_0\varphi(\tau))}{\tau} d\tau\right|\leq \left|\int_{\Gamma^+_{s,\infty}}\frac{e^{n\tau-j_0\varphi(\tau)}}{\tau}d\tau\right|+\left|\int_{\Gamma^-_{s,\infty}}\frac{e^{n\tau-j_0\varphi(\tau)}}{\tau}d\tau\right|.
		\end{equation}
		We need to find estimates for the two terms on the right-hand side. Both terms will be bounded similarly so we will focus on the first one. First, we observe that
		$$\left|\int_{\Gamma^+_{s,\infty}}\frac{e^{n\tau-j_0\varphi(\tau)}}{\tau}d\tau\right|\leq \frac{1}{r_\varepsilon(\eta)}\int_{r_\varepsilon(\eta)}^{+\infty}\exp\left(ns -j_0 \Re(\varphi(s + i t))\right)dt.$$
		Using \eqref{estVarphi} and \eqref{condS}, we have for $t\in[r_\varepsilon(\eta),+\infty[$ and $j_0\in\left[-\frac{n\alpha}{2},np\right]$
		$$ns -j_0 \Re(\varphi(s + it))\leq \left(n+\frac{j_0}{\alpha}\right)s -\frac{j_0}{\alpha}A_R s^{2\mu} + \frac{j_0}{\alpha} A_I t^{2\mu} \leq \frac{j_0}{\alpha} A_I \left(t^{2\mu} - \frac{r_\varepsilon(\eta)^{2\mu}}{2}\right)\leq-\frac{n}{2} A_I \left(t^{2\mu} - \frac{r_\varepsilon(\eta)^{2\mu}}{2}\right) .$$
		Thus,
		$$ns -j_0 \Re(\varphi(s + it))\leq -\frac{n}{4} A_Ir_\varepsilon(\eta)^{2\mu} -\frac{n}{2} A_I \left(t^{2\mu} - r_\varepsilon(\eta)^{2\mu}\right)\leq -\frac{n}{4} A_Ir_\varepsilon(\eta)^{2\mu} - \frac{1}{2} A_I \left(t^{2\mu} - r_\varepsilon(\eta)^{2\mu}\right).$$
		We can then conclude that 
		$$\left|\int_{\Gamma^+_{s,\infty}}\frac{e^{n\tau-j_0\varphi(\tau)}}{\tau}d\tau\right|\leq \frac{1}{r_\varepsilon(\eta)}\exp\left(-\frac{n}{4} A_Ir_\varepsilon(\eta)^{2\mu}\right)\underset{<+\infty}{\underbrace{\int_{r_\varepsilon(\eta)}^{+\infty}\exp\left(- \frac{1}{2} A_I \left(t^{2\mu} - r_\varepsilon(\eta)^{2\mu}\right)\right)dt}}.$$
		Using a similar proof to bound the second term in the right-hand side of \eqref{inProl1}, we can conclude the proof of \eqref{inProl}.
		
		\textbf{$\bullet$ Step 3:} We introduce the functions
		\begin{align}
			\forall u\in\R, \forall x\in\R, \forall s\in\R,\quad &g(u,x,s):=\exp\left(i(u+is)x - \beta (u+is)^{2\mu}\right),\label{defg}\\
			\forall x\in\R, \forall s\in]0,+\infty[,\quad & \Fc(x,s) :=\displaystyle\int_{-\infty}^{+\infty} \frac{g(u,x,s)}{i(u+is)}du.\label{defF}
		\end{align}
		We can prove that the function $\Fc$ verifies
		\begin{equation}\label{egF}
			\forall s\in]0,+\infty[,\forall x\in\R, \quad -\Fc(x,s) = 2\pi E_{2\mu}^{\beta}(x).
		\end{equation}
		
		For the sake of completeness, we give a proof of \eqref{egF} in the Appendix (Section \ref{secAppendix}).
		
		We observe that for $n,j_0\in\N^*$, if we define $\tilde{s}_{j_0}:=\frac{s}{-\alpha}\left(-\frac{j_0}{\alpha}\right)^\frac{1}{2\mu}$,  we have using the change of variables $\left(-\frac{j_0}{\alpha}\right)^\frac{1}{2\mu}t=\alpha u$ that
		\begin{align}\label{egInfty}
			\begin{split}
				\int_{\Gamma_{s,\infty}} \frac{\exp(n\tau-j_0\varphi(\tau))}{\tau} d\tau 
				&= i\int_{-\infty}^{+\infty} \frac{\exp\left(\left(n+\frac{j_0}{\alpha}\right)(s+it)+(-1)^{\mu+1}\frac{\beta}{\alpha^{2\mu}}\left(-\frac{j_0}{\alpha}\right)(s+it)^{2\mu}\right)}{s+it}dt\\
				& =  -i\int_{-\infty}^{+\infty} \frac{\exp\left(i(u+i\tilde{s}_{j_0})\frac{(n\alpha+j_0)}{\left(-\frac{j_0}{\alpha}\right)^\frac{1}{2\mu}} - \beta (u+i\tilde{s}_{j_0})^{2\mu}\right)}{i(u+i\tilde{s}_{j_0})}du\\
				& =  -i \Fc\left(\frac{(n\alpha+j_0)}{\left(-\frac{j_0}{\alpha}\right)^\frac{1}{2\mu}} ,\tilde{s}_{j_0}\right).
			\end{split}	
		\end{align}
		The equalities \eqref{egF} and \eqref{egInfty} imply that
		\begin{equation}\label{egInfty2}
			\int_{\Gamma_{s,\infty}} \frac{\exp(n\tau-j_0\varphi(\tau))}{\tau} d\tau  = 2i\pi E_{2\pi}^\beta\left(\frac{j_0+n\alpha}{\left(-\frac{j_0}{\alpha}\right)^\frac{1}{2\mu}}\right).
		\end{equation}
		
		To end the proof of Proposition \ref{prop:est_Tcprinc}, we will prove the existence of two positive constants $C,c$ such that for all $n,j_0\in\N\backslash\lc0\rc$ which verify $j_0\in\left[-\frac{n\alpha}{2},np\right]$, we have
		\begin{equation}\label{inE}
			\left|E_{2\mu}^\beta\left(\frac{j_0+n\alpha}{\left(-\frac{j_0}{\alpha}\right)^\frac{1}{2\mu}}\right)-E_{2\mu}^\beta\left(\frac{j_0+n\alpha}{n^\frac{1}{2\mu}}\right)\right|\leq \frac{C}{n^\frac{1}{2\mu}}\exp\left(-c\left(\frac{|j_0+n\alpha|}{n^\frac{1}{2\mu}}\right)^\frac{2\mu}{2\mu-1}\right).
		\end{equation}
		
		We recall that 
		$$\forall x\in\R, \quad {E_{2\mu}^\beta}^\prime(x)= - H_{2\mu}^\beta(x).$$
		Because of the mean value inequality, the fact that $j_0\in\left[-\frac{n\alpha}{2},np\right]$ and \eqref{inH}, there exists a positive constant $c>0$ such that
		\begin{align*}
			\left|E_{2\mu}^\beta\left(\frac{j_0+n\alpha}{\left(-\frac{j_0}{\alpha}\right)^\frac{1}{2\mu}}\right)-E_{2\mu}^\beta\left(\frac{j_0+n\alpha}{n^\frac{1}{2\mu}}\right)\right|& \leq |j_0+n\alpha|\left|\frac{1}{\left(-\frac{j_0}{\alpha}\right)^\frac{1}{2\mu}}-\frac{1}{n^\frac{1}{2\mu}}\right|\sup_{t\in\left[\frac{j_0+n\alpha}{\left(-\frac{j_0}{\alpha}\right)^\frac{1}{2\mu}},\frac{j_0+n\alpha}{n^\frac{1}{2\mu}}\right]}\left|{H_{2\mu}^{\beta}}(t)\right|\\
			& \lesssim |j_0+n\alpha|\left|\frac{1}{\left(-\frac{j_0}{\alpha}\right)^\frac{1}{2\mu}}-\frac{1}{n^\frac{1}{2\mu}}\right|\exp\left(-c\left(\frac{|j_0+n\alpha|}{n^\frac{1}{2\mu}}\right)^\frac{2\mu}{2\mu-1}\right).
		\end{align*}
		Furthermore, using once again the mean value inequality and the fact that $j_0\in\left[-\frac{n\alpha}{2},np\right]$, we have
		$$\left|\frac{1}{\left(-\frac{j_0}{\alpha}\right)^\frac{1}{2\mu}}-\frac{1}{n^\frac{1}{2\mu}}\right|\lesssim |j_0+n\alpha| \sup_{t\in[j_0,-n\alpha]}\frac{1}{t^{1+\frac{1}{2\mu}}}\lesssim \frac{|j_0+n\alpha|}{n^{1+\frac{1}{2\mu}}}.$$
		Therefore,
		$$\left|E_{2\mu}^\beta\left(\frac{j_0+n\alpha}{\left(-\frac{j_0}{\alpha}\right)^\frac{1}{2\mu}}\right)-E_{2\mu}^\beta\left(\frac{j_0+n\alpha}{n^\frac{1}{2\mu}}\right)\right|\lesssim \frac{1}{n^{1-\frac{1}{2\mu}}} \left(\frac{|j_0+n\alpha|}{n^\frac{1}{2\mu}}\right)^2 \exp\left(-c\left(\frac{|j_0+n\alpha|}{n^\frac{1}{2\mu}}\right)^\frac{2\mu}{2\mu-1}\right).$$
		Since the function $x\mapsto x^2\exp\left(-\frac{c}{2}x\right)$ is bounded, we conclude that
		$$\left|E_{2\mu}^\beta\left(\frac{j_0+n\alpha}{\left(-\frac{j_0}{\alpha}\right)^\frac{1}{2\mu}}\right)-E_{2\mu}^\beta\left(\frac{j_0+n\alpha}{n^\frac{1}{2\mu}}\right)\right|\lesssim \frac{1}{n^{1-\frac{1}{2\mu}}} \exp\left(-\frac{c}{2}\left(\frac{|j_0+n\alpha|}{n^\frac{1}{2\mu}}\right)^\frac{2\mu}{2\mu-1}\right).$$
		Since $1-\frac{1}{2\mu}\geq \frac{1}{2\mu}$, we easily conclude the proof of \eqref{inE}.
		
		Combining \eqref{inDec}, \eqref{inProl}, \eqref{egInfty2} and \eqref{inE}, we can end the proof of Proposition \ref{prop:est_Tcprinc}.
	\end{proof}
	
	\textbf{Acknowledgement:} The author is largely indebted to Jean-François Coulombel and Grégory Faye for their advice that led to this result, as well as their proofreading and corrections.
	
	\section{Appendix}\label{secAppendix}
	
	\subsection{Proof of equality \eqref{egF}}
	
	We recall that \eqref{egF} states that
	\begin{equation*}
		\forall s\in]0,+\infty[,\forall x\in\R, \quad -\Fc(x,s) = 2\pi E_{2\mu}^{\beta}(x).
	\end{equation*}
	
	\begin{proof}
		The starting point of the proof will be to prove sharp estimates on the function $g$ defined by \eqref{defg}. We observe that
		$$\forall u,x,s\in\R, \quad |g(u,x,s)|\leq \exp(-sx)\exp\left(-\Re(\beta(u+is)^{2\mu})\right).$$
		Using Young's inequality, we prove that there exists a constant $c>0$ such that
		$$\forall u,s\in\R, \quad \Re(\beta(u+is)^{2\mu})\geq \frac{\Re(\beta)}{2}u^{2\mu} - c s^{2\mu}.$$
		Thus, we have
		\begin{equation}\label{ing}
			\forall u,x,s\in\R,\quad |g(u,x,s)|\leq \exp(-sx+cs^{2\mu})\exp\left(-\frac{\Re(\beta)}{2}u^{2\mu}\right).
		\end{equation}
		
		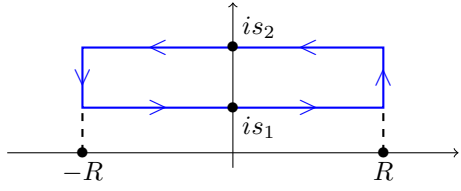
\begin{figure}
			\centering
			\begin{tikzpicture}[scale=2]
				\draw[->] (-1.5,0) -- (1.5,0);
				\draw[->] (0,-0.1) -- (0,1);
				\draw[thick,color=blue] (-1,0.3) -- (1,0.3) node[near start, sloped] {$>$} node[near end, sloped] {$>$} -- (1,0.7)node[midway, sloped] {$>$} -- (-1,0.7) node[near start, sloped] {$<$} node[near end, sloped] {$<$} -- cycle node[midway, sloped] {$>$};
				\draw[thick,dashed] (-1,0) -- (-1,0.3);
				\draw[thick,dashed] (1,0) -- (1,0.3);
				\draw (-1,0) node {$\bullet$};
				\draw (-1,0) node[below] {$-R$};
				\draw (1,0) node {$\bullet$};
				\draw (1,0) node[below] {$R$};
				\draw (0,0.7) node {$\bullet$} node[above right] {$is_2$};
				\draw (0,0.3) node {$\bullet$} node[below right] {$is_1$};
			\end{tikzpicture}
			\caption{Integrating path to prove \eqref{egFs}.}
			\label{int_path}
		\end{figure}
		
		We observe that for all $s\in]0,+\infty[$, the function $\Fc(\cdot,s)$ defined by \eqref{defF} is in the class $\Cc^1$ and 
		\begin{equation}\label{egderF}
			\forall x\in\R,\forall s\in]0,+\infty[,\quad \frac{\partial \Fc}{\partial x} (x,s) = 2\pi H_{2\mu}^\beta(x).
		\end{equation}
		Integrating the function $z\mapsto \frac{\exp(izx-\beta z^{2\mu})}{iz}$ on the rectangle depicted in the right-side of Figure \ref{int_path}, using the Cauchy formula as well as \eqref{ing} and passing to the limit $R\rightarrow +\infty$, we prove that
		\begin{equation}\label{egFs}
			\forall s_1,s_2\in]0,+\infty[, \quad \Fc(\cdot,s_1)=\Fc(\cdot,s_2).
		\end{equation}
		Finally, using \eqref{ing}, there exists $C>0$ independent from $x$ and $s$ such that 
		\begin{equation}\label{inF}
			\forall x\in\R,\forall s\in]0,+\infty[, \quad \left|\Fc(x,s)\right| \leq C \frac{e^{-s x+cs^{2\mu}}}{s}.
		\end{equation}
		For $x>0$, optimizing $e^{-s x+cs^{2\mu}}$ with respect to $s$ drives us to choose $s=\left(\frac{x}{2\mu c}\right)^\frac{1}{2\mu-1}$ in \eqref{inF}. Using \eqref{egFs}, we can prove that there exist two constants $C,c>0$ such that
		$$\forall x\in]0,+\infty[,\forall s\in]0,+\infty[, \quad \left|\Fc(x,s)\right| \leq \frac{C}{x^\frac{1}{2\mu-1}} \exp(-c|x|^\frac{2\mu}{2\mu-1}).$$
		Thus, 
		\begin{equation}\label{limF}
			\forall s\in]0,+\infty[,\quad \lim_{x\rightarrow+\infty}\Fc(x,s)=0.
		\end{equation}
		Using \eqref{egderF} and \eqref{limF}, we easily conclude the proof of \eqref{egF}.
	\end{proof}
	
	\bibliographystyle{alpha}
	\bibliography{references_IBVP_V9}{}
\end{document}